\theoremstyle{plain} %text of this environment is typesetted in italics
\newtheorem{theorem}{Theorem}[section]
\newtheorem{lemma}[theorem]{Lemma}
\newtheorem{corollary}[theorem]{Corollary}
\newtheorem{proposition}[theorem]{Proposition}
\newtheorem{fact}[theorem]{Fact}
\theoremstyle{definition}
\newtheorem{definition}[theorem]{Definition}
\newtheorem{remark}[theorem]{Remark}
\newtheorem{example}[theorem]{Example}
\DeclareMathOperator{\Hess}{Hess}
\DeclareMathOperator{\arcsinh}{arcsinh}
\begin{document}
%\begin{frontmatter}
\title[Singularities of timelike minimal surfaces]{Singularities of generalized timelike minimal surfaces\\ in Lorentz-Minkowski 3-space}
\author[S. Akamine]{Shintaro Akamine}
\address[Shintaro Akamine]{
Department of Liberal Arts, College of Bioresource Sciences,
Nihon University, 
1866 Kameino, Fujisawa, Kanagawa, 252-0880, Japan}
\email{akamine.shintaro@nihon-u.ac.jp}

\date{\today}

\keywords{Lorentz-Minkowski space, timelike minimal surface, singularity.}
\subjclass[2010]{Primary 53A10; Secondary 53B30, 57R45.}
%\thanks{ $^*$Research Fellow of the Japan Society for the Promotion of Science.}

\begin{abstract}
%In the previous work \cite{Akamine}, the author determined the behavior of the Gaussian curvature near singular points on a surface in a class of timelike minimal surfaces with singularities in Lorentz-Minkowski $3$-space, which are called minfaces. 
A timelike minimal surface in Minkowski 3-space is a surface whose induced metric is Lorentzian and with vanishing mean curvature. Such surfaces have many kinds of singularities. In this paper, we prove existence and non-existence theorems of singularities of timelike minimal surfaces, and show that various diffeomorphism types of singularities that do not appear on these Riemannian counter parts, such as minimal surfaces in Euclidean space and maximal surfaces in Minkowski space, appear on timelike minimal surfaces. We also give criteria for cuspidal butterfly, cuspidal $S_1$ singularity, $(2,5)$-cuspidal edge, cuspidal beaks and $D_4$ singularity of timelike minimal surfaces. Finally, duality and invariance theorems for these singularities and examples are given.  
\end{abstract}

\maketitle

%\end{frontmatter}

\section{Introduction}\label{Sec.1}
A spacelike surface in the three-dimensional Lorentz-Minkowski space $\mathbb{L}^3$ whose mean curvature vanishes identically is called a {\it maximal surface}. It is known that only plane is a complete maximal surface \cite{Calabi,CY,Kobayashi}, and hence maximal surfaces with singular points has been considered as a natural object of study. Here, a {\it singular point} or {\it singularity} is a point at which the surface is not immersed.
There have been many remarkable results concerning singularities themselves. In particular, diffeomorphism types of various singularities have been determined by using a pair $(g, \omega)$ of a meromorphic function $g$ and a holomorphic $1$-form $\omega$ on a Riemann surface, which is called {\it Weierstrass data} and describes a maximal surface. See \cite{ FSUY, KKSY, KY,OT, OT2,UY} and their references. Moreover, Fujimori, Saji, Umehara and Yamada \cite{FSUY} proved that generic singularities on maximal surfaces (more precisely, the class of maximal surfaces with singularities called {\it maxfaces}) consist of cuspidal edges, swallowtails and cuspidal cross caps. For their standard forms, see figure \ref{Fig1}.

Minimal surfaces in the three-dimensional Euclidean space $\mathbb{E}^3$ are also classical  and important objects of study in differential geometry. On such minimal surfaces, isolated singular points, called {\it branch points}, where the tangent plane degenerates to a single point, often appear. 
To give some well-known historical examples of such surfaces, Catalan's minimal surface \cite{Catalan} and Henneberg's non-orientable minimal surface \cite{Henneberg} are minimal surfaces with branch points locally having the diffeomorphism type $f_{D_4^-}(u,v)=(uv,u^2- 3v^2,u^2v- v^3)$ called the $D_4^-$ singularity, see Figure \ref{Fig:Catalan} (and also Figure \ref{Fig1} for $D_4^-$ singularity). 

%%%%%%%%%%%%%%%%%
%\vspace{-0.3cm}
\begin{figure}[!h]
\begin{center}
\begin{tabular}{c}
%1
%\hspace{-1.6cm}
\begin{minipage}{0.4\hsize}
\begin{center}
\vspace{-1.0cm}
\includegraphics[clip,scale=0.40,bb=0 0 500 400]{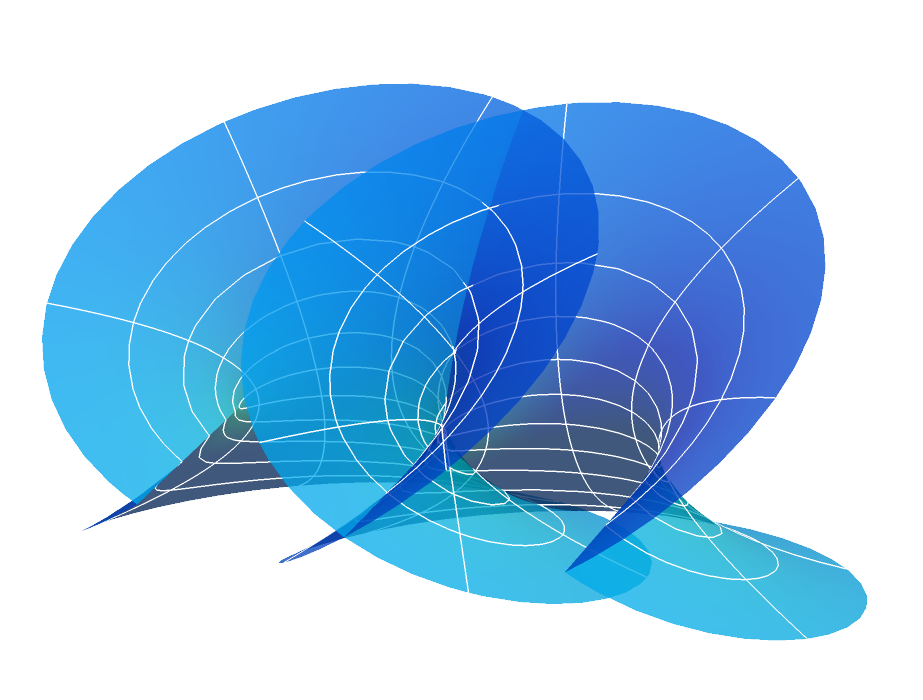}
\vspace{0.5cm}
\end{center}
\end{minipage}
%2
\hspace{0.8cm}
\begin{minipage}{0.4\hsize}
\begin{center}
\vspace{-0.45cm}
\includegraphics[clip,scale=0.35,bb=0 0 500 400]{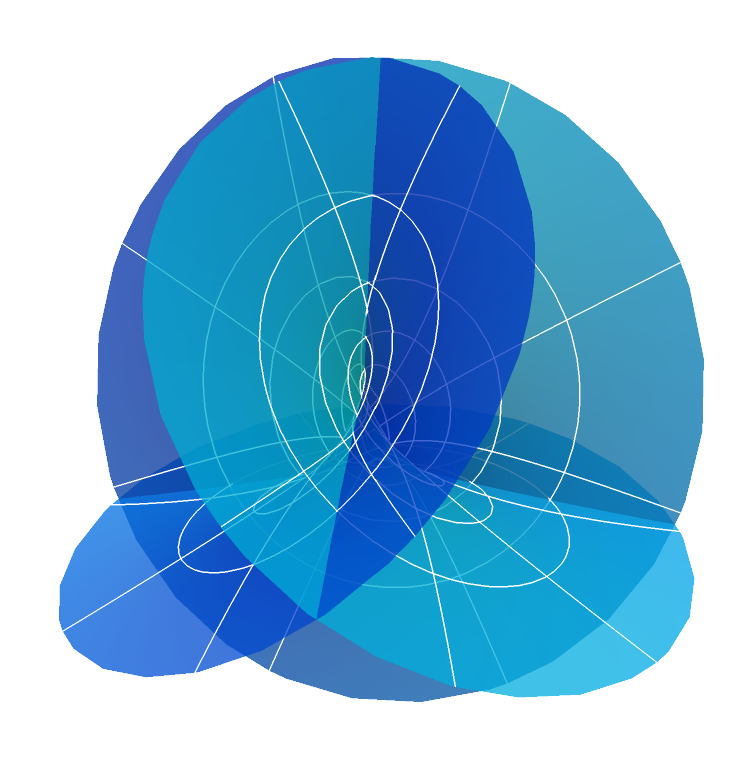}
\vspace{0.3cm}
\end{center}
\end{minipage}

\end{tabular}
\end{center}
\vspace{-1.3cm}
\caption{Catalan's minimal surface (left) and Henneberg's minimal surface (right) with $D_4^-$ singularities as branch points.}
 \label{Fig:Catalan}
\end{figure}
%%%%%%%%%%%%%%%%%%%%%%

In this paper, as Lorentzian counter parts of the above surfaces, we deal with {\it timelike minimal surfaces}, which are surfaces in $\mathbb{L}^3$ whose induced metric is Lorentzian and whose mean curvature vanishes identically. The aim of this paper is to show that when a surface is equipped with Lorentzian metric, the singularities appearing on it are quite different from those appearing on the above Riemmannian counter parts, namely minimal surfaces in $\mathbb{E}^3$ and maximal surfaces in $\mathbb{L}^3$.

As conformal minimal surfaces are characterized by the property that their coordinate functions are harmonic (i.e. the surface $f$ satisfies the Laplace equation $\Delta f=\bm{0}$), each conformal timelike minimal surface $f$ is also characterized by the wave equation $\Box f =\bm{0}$, where $\square$ is the d'Alembert operator. For local null coordinates $(u,v)$ so that $f^*\langle, \rangle =2Fdudv$, the wave equation above is equivalent to $f_{uv}=\bm{0}$. Here, $\langle,\rangle$ is the canonical Lorentzian inner product of $\mathbb{L}^3$. Hence, arbitrary timelike minimal surface $f$ is locally decomposed into the sum of two null curves as
\[
f(u,v) = \frac{\varphi(u)+\psi(v)}{2},\quad \langle \varphi', \varphi'\rangle = \langle \psi', \psi'\rangle =0,
\] 
where $\varphi'=d\varphi/du$ and $\psi'=d\psi/dv$. For example, if we take two regular null curves $\varphi(u)=\left(-u-u^3/3, u-u^3/3,u^2\right)$ and $\psi(v)=\left(v+v^3/3, v-v^3/3, v^2\right)$, we obtain the timelike Enneper surface $f=(\varphi(u)+\psi(v))/2$. Singular points of this surface correspond to points $(u,v)$ satisfying $uv=-1$ at which non-zero velocity vectors $\varphi'$ and $\psi'$ are linearly dependent. Singular points of this surface consist of cuspidal edges and swallowtails, see \cite[Example 4.5]{Akamine} for more details. 
As in this example, a timelike minimal surface with singularities that can be locally decomposed into a sum of regular null curves is called a {\it minface}, and singularities on minfaces are investigated by takahashi \cite{T} (see also \cite{Akamine}) in the same way as maxfaces. 
In particular, the criteria for cuspidal edge, swallowtail and cuspidal cross cap are given in \cite{T}. These are generic singularities on maximal surfaces, but if we consider more complicated diffeomorphism types, we will show in Section \ref{Sec:Gsing} that singularities that do not appear on the maximal surfaces do appear on minfaces.

On the other hand, once we consider the situation where the generating null curves $\varphi$ and $\psi$ are no longer regular, more types of singularities start to appear on timelike minimal surfaces.
For example, let us consider the periodic closed null curve 
\[
\gamma(s)=\int\left(1, \cos{3s}, \sin{3s} \right)\cos{2s}ds
\]
and the surface $f(u,v)=(\gamma(u)+\gamma(v))/2$. Since $f$ is periodic in both $u$ and $v$ directions, $f$ gives a timelike minimal torus, see Figure \ref{Fig:D4Torus}. The tangent plane of the surface $f$ becomes zero-dimensional, i.e. $df_p=\bm{0}$ at a point $p$ where the null curves $\varphi$ and $\psi$ simultaneously fail to be regular, and $f$ is locally diffeomorphic to the standard $D_4^+$ singularity $f_{D_4^+}(u,v)=(uv,u^2+ 3v^2,u^2v+ v^3)$ at $p$. As shown in \cite{OT2}, it is known that such $D_4^+$ singularities never appear on minimal or maximal surfaces. It has also been pointed out in \cite{KN1,KN2} that timelike minimal surfaces with singularity where at least one of the generating null curves is not regular or smooth appear in string theory in physics.

\begin{figure}[htb]
\vspace{-2.0cm}

\begin{center}
 \begin{tabular}{{c@{\hspace{-20mm}}c@{\hspace{-20mm}}c}}
\hspace{-8mm}   \resizebox{8.0cm}{!}{\includegraphics[clip,scale=0.33,bb=0 0 555 449]{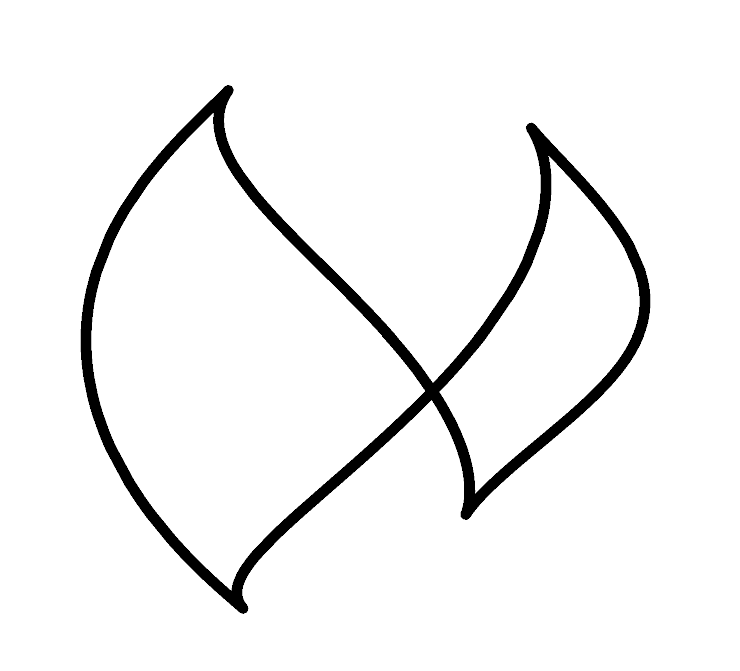}}&
 \hspace{-6mm}  \resizebox{6.7cm}{!}{\includegraphics[clip,scale=0.30,bb=0 0 555 449]{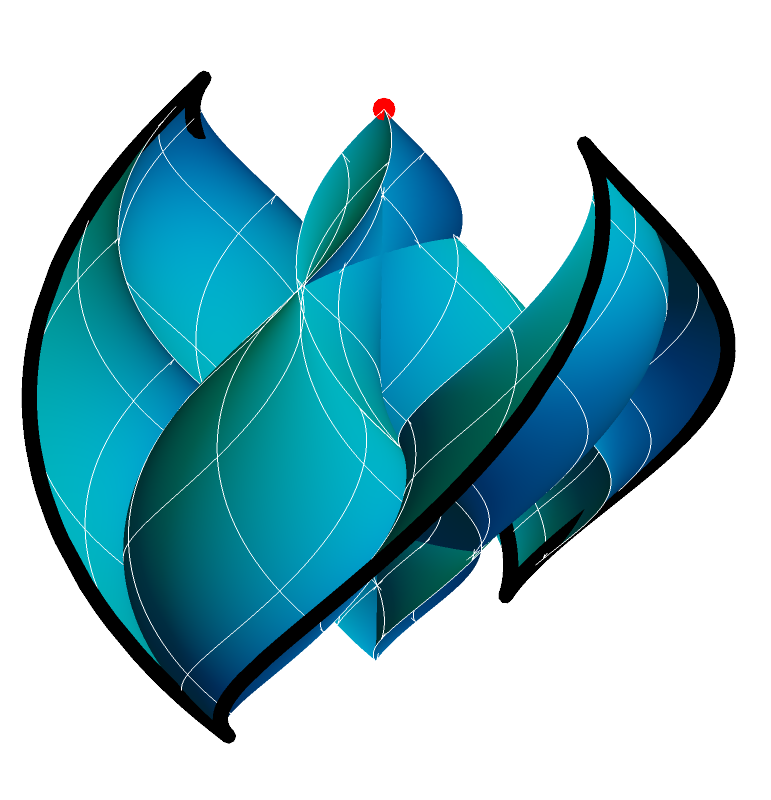}}&
 \hspace{+3mm} \resizebox{7.2cm}{!}{\includegraphics[clip,scale=0.38,bb=0 0 555 449]{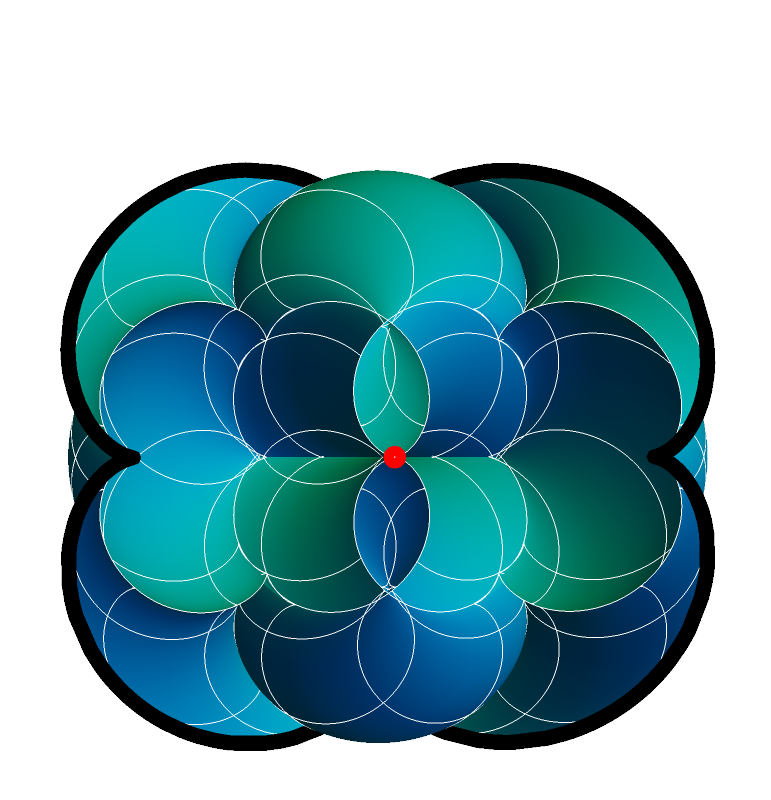}} \\
  {\hspace{-35mm}\footnotesize  A closed null curve.} &
  {\hspace{-28mm}\footnotesize  A timelike minimal torus.} &
  {\hspace{-20mm}\footnotesize  Another view.}
 \end{tabular}
 \caption{A periodic closed null curve $\gamma$ (left) and a timelike minimal torus with $D_4^+$ singularity (red dot) generated by $\gamma$ (center and left).}
 \label{Fig:D4Torus}
\end{center}
\end{figure}

In this paper, as seen in the example above, we consider a wider class of timelike minimal surfaces than minfaces and show that many diffeomorphism types of singularities appear on them that do not appear in the Riemannian counter parts. 

The main results of this paper are as follows.

%\begin{center}
\begin{itemize}
 \setlength{\leftskip}{-0.8cm}
  \setlength{\rightskip}{0.5cm}
\item Criteria for cuspidal butterfly and cuspidal $S_1^+$ singularity on timelike minimal surfaces are given, whereas the nonexistence of cuspidal $S_1^-$ singularity, which usually appears on maximal surfaces, is shown on timelike minimal surfaces (Theorem \ref{Thm:CBF_CS1}).
\vspace{0.2cm}

\item A duality between cuspidal butterfly and cuspidal $S_1^+$ singularity holds: a timelike minimal surface $f=(\varphi+\psi)/2$ has a cuspidal butterfly at a singular point $p$ if and only if its conjugate surface $\hat{f}:=(\varphi-\psi)/2$ has a cuspidal $S_1^+$ singularity at $p$ (Corollary \ref{cor:duality_CBF}). 
\vspace{0.2cm}

\item Criterion for $(2,5)$-cuspidal edge on timelike minimal surfaces is given and the existence of $(2,2k+1)$-cuspidal edge for arbitrary positive integer $k$ is also shown (Theorem \ref{criterion_(2,5)CE} and Proposition \ref{ExistenceCusp}). Furthermore, invariance of $(2,5)$-cuspidal edges under isometric and anti-isometric deformations is given (Corollary \ref{cor:duality_(2,5)_cusp}). 
\vspace{0.2cm}

\item Degenerate singularities (see Section \ref{Sec.frontals} for definition) are considered. In particular, criteria for cuspidal beaks and $D_4^+$ singularity are given, whereas the nonexistence of cuspidal lips and $D_4^-$ singularity is shown on timelike minimal surfaces (Proposition \ref{prop:BeaksLips} and Proposition \ref{D_4criterion}). Furthermore, invariance of $D_4^+$ singularities under isometric and anti-isometric deformations is given (Corollary \ref{cor:duality}). 
\end{itemize}

%\end{center}

Finally, it is worth mentioning some related studies. While timelike minimal surfaces are locally represented as translation surfaces of null curves, singularities of translation surfaces in $\mathbb{E}^3$ are studied in \cite{FT}. Singularities of timelike surfaces in $\mathbb{L}^3$ with non-zero constant mean curvature are also studied in \cite{BS}. 

%%%%%%%%%%%%%%%%%

\section{Timelike minimal surfaces with singularities}
\subsection{Singularities on fronts and frontals}\label{Sec.frontals}
First we recall the singularity theory of frontals and fronts, see \cite{AGV, FSUY, SUY09, UY} for details. Let $U$ be a domain in $\mathbb{R}^2$ and $u$, $v$ are local coordinates on $U$. A smooth map $f\colon U \longrightarrow \mathbb{R}^3$ is called a {\it frontal} if there exists a smooth map $n$ from $U$ to the unit sphere $S^2$ such that $n$ is perpendicular to $df(TU)$ with respect to the Euclidean metric $\langle \  ,\  \rangle_E$ of $\mathbb{R}^3$. We call $n$ the {\it unit normal vector field} of $f$. Moreover if the {\it Legendrian lift} $L=(f,n)\colon U \longrightarrow \mathbb{R}^3\times S^2$ is an immersion, $f$ is called a {\it front}. A point $p\in U$ where $f$ is not an immersion is called a {\it singular point} of the frontal $f$, and we call the set of singular points of $f$ the {\it singular set}. A singular point $p$ is said to be of {\it rank one} (resp.~{\it rank zero}) if ${\rm rank}(Jf_p)=1$ (resp.~ ${\rm rank}(Jf_p)=0$). 

%${\rm rank}(Jf_p)=0$
%$\rank{Jf_p}=0$

The function $\lambda=\det(f_u, f_v, n)$ on $(U;u,v)$ is called the {\it signed area density function} of the frontal $f$. A singular point $p$ is called {\it non-degenerate} (resp.~{\it degenerate}) if $d\lambda_p \neq0$ (resp.~$d\lambda_p=0$). 

The set of singular points of $f$ corresponds to zeros of $\lambda$. When a singular point $p$ of a frontal $f$ is non-degenerate, there exist an $\varepsilon>0$ and a regular curve $\gamma=\gamma(t): (-\varepsilon, \varepsilon)\longrightarrow U$ such that $\gamma(0)=p$ and the image of $\gamma$ coincides with the singular set of $f$ around $p$. We call $\gamma$ the {\it singular curve} and the direction of $\gamma'={d\gamma}/{dt}$ the {\it singular direction}. On the other hand, there exists a non-zero vector $\eta \in \mathrm{Ker}(df_p)$ because $p$ is non-degenerate. We call $\eta$ the {\it null direction}. 
 
The following functions on the singular curve $\gamma$ are used in the criteria for various singularities.
\begin{equation}\label{eq:delta_psi}
\delta(t)=\det{(\gamma'(t),\eta(t))},\quad \psi(t)=\det{(d{f}(\gamma'(t)),n(\gamma(t)),d{n}(\eta(t)))}.
\end{equation}

Let $U_i$ be domains of $\mathbb{R}^2$ and $p_i$ be points in $U_i$ ($i=1, 2$). Two smooth map germs $f_1\colon (U_1,p_1)\longrightarrow (\mathbb{R}^3,f_1(p_1))$ and $f_2 \colon (U_2,p_2)\longrightarrow (\mathbb{R}^3,f_2(p_2))$ are {\it $\mathcal{A}$-equivalent} if there exist diffeomorphism germs $\Phi\colon (\mathbb{R}^2,p_1) \longrightarrow (\mathbb{R}^2,p_2)$ and $\Psi\colon (\mathbb{R}^3,f_1(p_1)) \longrightarrow (\mathbb{R}^3,f_2(p_2))$ such that $f_2 = \Psi \circ f_1\circ \Phi^{-1}$. 

We set $f^{(2,2k+1)}_{CE}$, $f_{SW}$, $f_{CCR}$, $f_{CBF}$, $f_{CS_1^{\pm}}$, $f_{CBK}$, $f_{CL}$, $f_{D_4^{\pm}}\colon (\mathbb{R}^2,0)\longrightarrow (\mathbb{R}^3,0)$ as
\begin{align}
&f^{(2,2k+1)}_{CE}(u,v)=(u,v^2,v^{2k+1}),\quad f_{SW}(u,v)=(3u^4+u^2v,4u^3+2uv,v), \nonumber \\
&f_{CCR}(u,v)=(u,v^2,uv^3),\quad f_{CBF}(u,v)=(u,5v^4+2uv,4v^5+uv^2-u^2),  \nonumber \\
&f_{CS_1^{\pm}}(u,v)=(u,v^2,v^3(u^2\pm v^2)),\quad f_{CBK}(u,v)=(v,u^3-uv^2,3u^4-2u^2v^2),\nonumber \\
&f_{CLP}(u,v)=(v,u^3+uv^2,3u^4+2u^2v^2),\quad f_{D_4^{\pm}}(u,v)=(uv,u^2\pm 3v^2,u^2v\pm v^3),\nonumber 
\end{align}
where $k$ is a positive integer. The map germs $f^{(2,2k+1)}_{CE}$, $f_{SW}$, $f_{CCR}$, $f_{CBF}$, $f_{CS_1^{\pm}}$, $f_{CBK}$, $f_{CL}$, $f_{D_4^{\pm}}$ are called the {\it $(2,2k+1)$-cuspidal edge}, {\it swallowtail}, {\it cuspidal cross cap}, {\it cuspidal butterfly}, {\it cuspidal $S_1^{\pm}$ singularity}, {\it cuspidal beaks}, {\it cuspidal lips}, {\it $D_4^{\pm}$ singularity}, respectively (See Figure \ref{Fig1}).
%%%%%%%%%%%%%%%%%
%\vspace{-0.8cm}
\begin{figure}[htbp]
\begin{center}
\hspace{-1.6cm}\includegraphics[clip,scale=0.26,bb=0 0 1800 1200]{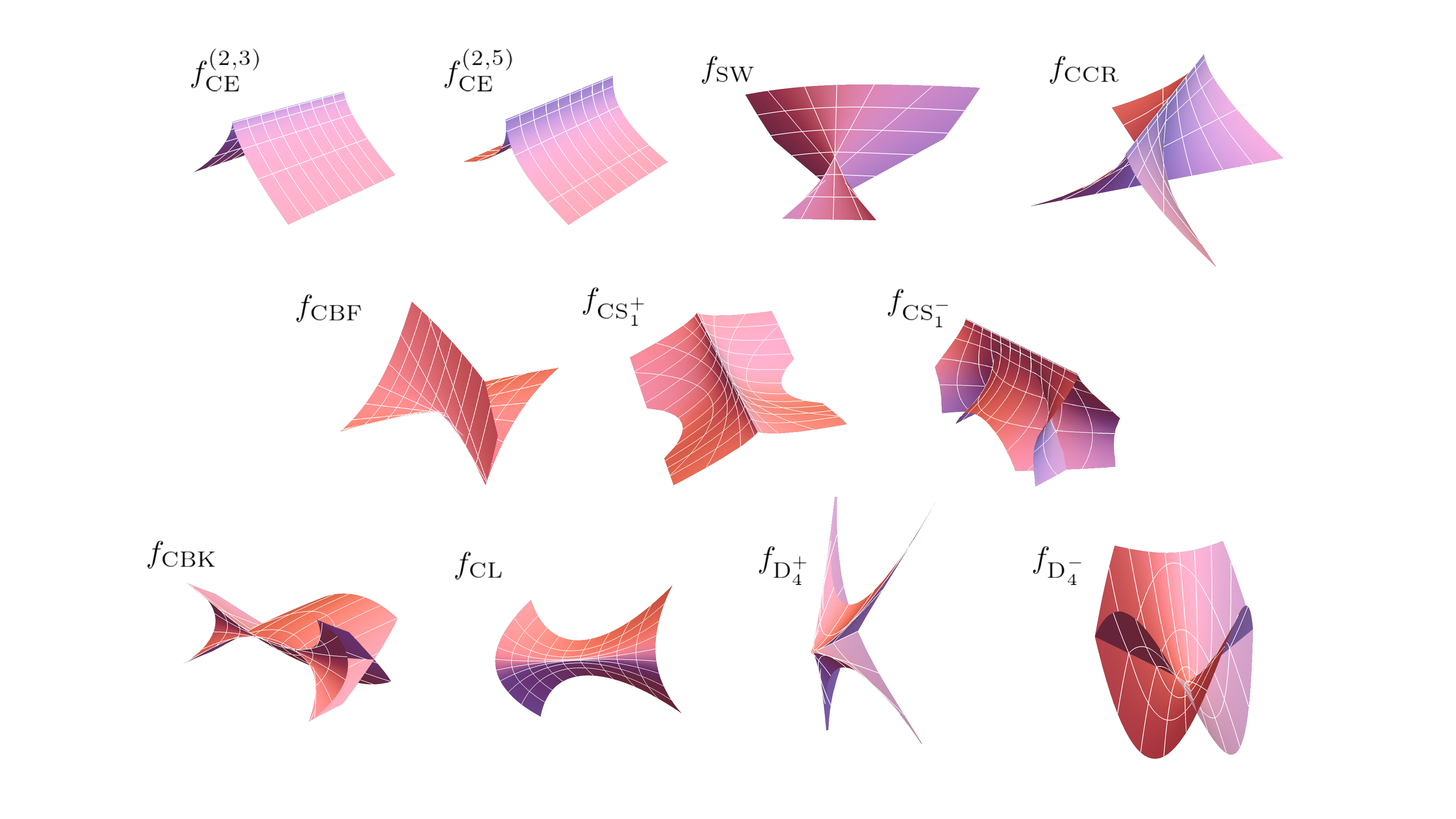}
\vspace{-0.8cm}
\caption{From top left to bottom right: the singularities of the map germs $f^{(2,3)}_{CE}$, $f^{(2,5)}_{CE}$, $f_{SW}$, $f_{CCR}$, $f_{CBF}$, $f_{CS_1^{+}}$, $f_{CS_1^{-}}$, $f_{CBK}$, $f_{CL}$, $f_{D_4^{+}}$, $f_{D_4^{-}}$.}\label{Fig1}
\end{center}
\end{figure}

%%%%%%%%%%%%%%%%%%%%%%

\subsection{Timelike minimal surfaces and their singularities}
In this subsection, we clarify the notion of timelike minimal surfaces with singularities used here. First, we recall regular timelike minimal surfaces. 

A smooth immersion $f$ from a 2-dimensional smooth manifold $\Sigma$ to Minkowski $3$-space $\mathbb{L}^3=\left(\mathbb{R}^3,\langle,\rangle=-dt^2+dx^2+dy^2\right)$ is said to be {\it timelike} if its induced metric $\mathrm{I}=\hat{f}\langle, \rangle$ is Lorentzian. It is well-known that near each point of $\Sigma$, there exists a coordinate system $(U;u,v)$, called a {\it null coordinate system} such that $\mathrm{I}=2Fd{u}d{v}$, where $F$ is a smooth function. On $U$, $f$ is a {\it timelike minimal surface}, that is, whose mean curvature vanishes identically if and only if it satisfies the linear wave equation $f_{uv}=0$. Hence, $f$ can be locally decomposed into the sum of two regular null curves $\varphi$ and $\psi$:
\begin{equation}\label{eq:null_decomp0}
f(u,v)=\frac{\varphi(u)+\psi(v)}{2},
\end{equation}
where a {\it null curve} is a curve whose tangent vector field is lightlike. Conversely, if $f$ is written as \eqref{eq:null_decomp0} using two linearly independent regular null curves $\varphi$ and $\psi$, it is also a timelike minimal surface (cf. \cite{McNertney}).

 As well as maximal surfaces in $\mathbb{L}^3$, timelike minimal surfaces have singularities in general. To deal with such singularities, Kim, Koh, Shin and Yang \cite{KKSY}  and Takahasi \cite{T} (see also \cite{Akamine}) enlarged the class of timelike minimal surface as follows.
\begin{definition}\label{def:gtms}
A non-constant smooth map $f\colon\Sigma \longrightarrow \mathbb{L}^3$ is called a {\it generalized timelike minimal surface} if  $f$ is immersed on an open dense set of $\Sigma$ and there exists a local coordinate system $(U;u,v)$ near each point of $\Sigma$ such that $\langle f_u, f_u \rangle = \langle f_v, f_v \rangle =0$, and $f_{uv}= 0$ on $U$.
Moreover, if both $f_u$ and $f_v$ never vanish, $f$ is called a {\it minface}.
\end{definition}

The notion of minfaces is a timelike analogue of maxfaces introduced in \cite{UY} as a notion of spacelike maximal surfaces with rank one singularities. 

By definition, a generalized timelike minimal surface $f$ is also written as \eqref{eq:null_decomp0} although two null curves in  \eqref{eq:null_decomp0} may not be linearly independent or not be regular. 
Since the generating curves $\varphi$ and $\psi$ in \eqref{eq:null_decomp0} are null, $f$ can be locally written as follows:
\begin{equation}\label{eq:null_decomp}
\begin{split}
  f(u,v)
  &=\frac{1}{2}\int^u_{u_0}\left( -1-(g_1)^2,1-(g_1)^2, 2g_1 \right)\omega_1\\
  &+\frac{1}{2}\int^v_{v_0}\left( 1+(g_2)^2,1-(g_2)^2, -2g_2 \right)\omega_2 
    +f(u_0,v_0),
    \end{split}
\end{equation}
where $g_1(u)$ and $g_2(v)$ are functions, and $1$-forms $\omega_1(u)=\hat{\omega}_1(u)du$ and $\omega_2(v)=\hat{\omega}_2(v)dv$. The functions $g_1$ and $g_2$ are $\mathbb{R}\cup\{\pm \infty\}$-valued. The quadruple $(g_1, g_2,\omega_1, \omega_2)$ is called the {\it real Weierstrass data} (or {\it W-data} for short) of $f$.
By \eqref{eq:null_decomp}, W-data satisfies that if $g_i(p)=\pm \infty$ for some $i=1,2$ then $\hat{\omega_i}(p)=g_i\hat{\omega_i}(p)=0$ and $g_i^2\hat{\omega}_i(p)$ is a real number.  

To avoid unnecessary complications, throughout this paper we always assume that %W-data $(g_1, g_2,\omega_1, \omega_2)$ satisfies the condition that
\begin{center}
 $g_1$ and $g_2$ have finite values at any singular point. 
\end{center}
This assumption holds automatically for any minface after taking an ambient isometry (see \cite[Fact A.7]{Akamine}).

The set of singular points, denoted by $\Sigma(f)$, of a generalized timelike minimal surface $f$ in \eqref{eq:null_decomp} is written as $\Sigma(f)=\Sigma^g(f)\cup \Sigma^\omega(f)$, where
\begin{equation}\label{eq:singular_set}
\Sigma^g(f)=\{p\in \Sigma \mid g_1g_2(p)=1\},\quad \Sigma^\omega(f)=\{p\in \Sigma \mid \hat{\omega}_1\hat{\omega}_2(p)=0\}.
\end{equation}
We call a singular point in $\Sigma^g(f)$ (resp.~ in $\Sigma^\omega(f)$) a {\it $g$-singular point} (resp.~{\it $\omega$-singular point}). 
The velocity vector fields of the generating null curves $\varphi$ and $\psi$ in \eqref{eq:null_decomp} are linearly dependent on $\Sigma^g(f)$, while at least one of $\varphi$ and $\psi$ is not regular on $\Sigma^\omega(f)$. Therefore, $\Sigma^g(f)\cap \Sigma^\omega(f) \neq  \emptyset$ in general, and it is important when considering the types of singularities whether they belong to $\Sigma^g(f)\cap \Sigma^\omega(f)$ or not as we will see in Section \ref{sec:omega_sing}. By Definition \ref{def:gtms}, a minface is a timelike minimal surface without $\omega$-singular points. 

We should remark that any generalized timelike minimal surface $f$ becomes a frontal with the unit normal vector field
\begin{equation}\label{eq:unitnormal}
n=\dfrac{1}{|n|_E}(g_1+g_2,-g_1+g_2,1+g_1g_2),\quad |n|_E=\sqrt{(1-g_1g_2)^2+2(g_1+g_2)^2}\neq0.
\end{equation}

\section{Singularities in $\Sigma^g(f)\setminus \Sigma^\omega(f)$}\label{Sec:Gsing}
In this section, we consider timelike minimal surfaces with singularities in $\Sigma^g(f)\setminus \Sigma^\omega(f)$, such surfaces are nothing but minfaces in Definition \ref{def:gtms}. First, we recall criteria for cuspidal edge, swallowtail and cuspidal cross cap on minfaces given by Takahashi \cite{T}.

\begin{fact}[\cite{T}, see also \cite{Akamine}]\label{Fact: T-lemma}
Let $f\colon U \longrightarrow \mathbb{L}^3$ be a minface with W-data $(g_1, g_2,\hat{\omega}_1du, \hat{\omega}_2dv)$ on $U$, and $p\in U$ be a singular point. Then $f$ is $\mathcal{A}$-equivalent to 
\begin{itemize}
\item[(i)] the cuspidal edge at $p$ if and only if $\frac{(g_1)_u}{g_1^2\hat{\omega}_1}-\frac{(g_2)_v}{g_2^2\hat{\omega}_2}\neq 0\text{ and }\frac{(g_1)_u}{g_1^2\hat{\omega}_1}+\frac{(g_2)_v}{g_2^2\hat{\omega}_2}\neq0\text{ at } p,$

\item[(ii)] the swallowtail at $p$ if and only if $$\frac{(g_1)_u}{g_1^2\hat{\omega}_1}-\frac{(g_2)_v}{g_2^2\hat{\omega}_2}\neq 0,\ \frac{(g_1)_u}{g_1^2\hat{\omega}_1}+\frac{(g_2)_v}{g_2^2\hat{\omega}_2}=0,\ \text{and}\ \frac{g_1}{(g_1)_u}\left(\frac{(g_1)_u}{g_1^2\hat{\omega}_1}\right)_u-\frac{g_2}{(g_2)_v} \left(\frac{(g_2)_v}{g_2^2\hat{\omega}_2}\right)_v\neq 0\text{ at }p,$$

\item[(iii)] the cuspidal cross cap at $p$ if and only if $$\frac{(g_1)_u}{g_1^2\hat{\omega}_1}-\frac{(g_2)_v}{g_2^2\hat{\omega}_2}=0,\ \frac{(g_1)_u}{g_1^2\hat{\omega}_1}+\frac{(g_2)_v}{g_2^2\hat{\omega}_2}\neq0,\ \text{and}\ \frac{g_1}{(g_1)_u}\left(\frac{(g_1)_u}{g_1^2\hat{\omega}_1}\right)_u+\frac{g_2}{(g_2)_v} \left(\frac{(g_2)_v}{g_2^2\hat{\omega}_2}\right)_v\neq0\text{ at }p.$$
\end{itemize}
\end{fact}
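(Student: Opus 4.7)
The plan is to apply the standard criteria of Kokubu--Rossman--Saji--Umehara--Yamada (for cuspidal edge and swallowtail) and Fujimori--Saji--Umehara--Yamada (for cuspidal cross cap), which state that at a non-degenerate rank-one singular point $p$ of a frontal, with singular curve $\gamma$ and null direction $\eta$: $f\sim_{\mathcal A} f_{CE}$ iff $f$ is a front at $p$ and $\delta(0)\neq 0$; $f\sim_{\mathcal A} f_{SW}$ iff $f$ is a front at $p$, $\delta(0)=0$, $\delta'(0)\neq 0$; and $f\sim_{\mathcal A} f_{CCR}$ iff $\delta(0)\neq 0$, $\psi(0)=0$, $\psi'(0)\neq 0$ (with $\delta,\psi$ as in \eqref{eq:delta_psi}). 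The proof reduces to translating each of these universal conditions into data on $(g_1,g_2,\hat\omega_1,\hat\omega_2)$.

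From \eqref{eq:null_decomp} I would compute
\[
f_u = \tfrac{\hat\omega_1}{2}(-1-g_1^2,\,1-g_1^2,\,2g_1),\qquad f_v = \tfrac{\hat\omega_2}{2}(1+g_2^2,\,1-g_2^2,\,-2g_2),
\]
and combining with \eqref{eq:unitnormal} check that the signed area density reduces to $\lambda = -\tfrac{|n|_E}{2}\hat\omega_1\hat\omega_2(1-g_1 g_2)$. Since $\hat\omega_1\hat\omega_2$ is nowhere zero on a minface, $\Sigma(f)=\{g_1 g_2=1\}$. At a singular point $p$ the linear dependence $\hat\omega_2 f_u + \hat\omega_1 g_1^2 f_v = 0$ provides the null direction $\eta = \hat\omega_2\,\partial_u + \hat\omega_1 g_1^2\,\partial_v$, and the formula $d\lambda|_p \propto g_2(g_1)_u\,du + g_1(g_2)_v\,dv$ gives the singular direction $\gamma'(0) = g_1(g_2)_v\,\partial_u - g_2(g_1)_u\,\partial_v$; non-degeneracy at $p$ is equivalent to $((g_1)_u,(g_2)_v)\neq(0,0)$.

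A short computation using $g_1 g_2=1$ at $p$ now yields
\[
\delta(0) = g_1\hat\omega_1\hat\omega_2\left(\frac{(g_1)_u}{g_1^2\hat\omega_1} + \frac{(g_2)_v}{g_2^2\hat\omega_2}\right),
\]
identifying the ``sum'' condition with $\delta(0)\neq 0$. For $\psi(0)$ I would use $dn = \tfrac{1}{|n|_E}(dN-\langle n,dN\rangle_E n)$ with $N=|n|_E n$: expanding $dN(\eta) = A(1,-1,g_2)+B(1,1,g_1)$ with $A=\hat\omega_2(g_1)_u$, $B=\hat\omega_1 g_1^2(g_2)_v$, and exploiting the identity $N = g_1(1,-1,g_2)+g_2(1,1,g_1)$ valid at $p$, one sees $\psi(0)$ is a nonzero multiple of $Ag_2-Bg_1 = \hat\omega_1\hat\omega_2\bigl(\frac{(g_1)_u}{g_1^2\hat\omega_1}-\frac{(g_2)_v}{g_2^2\hat\omega_2}\bigr)$, so that $f$ is a front at $p$ iff the ``difference'' condition holds. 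For $\delta'(0)$ and $\psi'(0)$ the crucial observation is that $c(u):=(g_1)_u/(g_1^2\hat\omega_1)$ depends only on $u$ and $d(v):=(g_2)_v/(g_2^2\hat\omega_2)$ only on $v$, hence $\partial_{\gamma'}(c\pm d) = g_1(g_2)_v\,c_u \mp g_2(g_1)_u\,d_v$; substituting the point condition $c=-d$ in case (ii) or $c=d$ in case (iii) and using $g_1 g_2=1$ transforms this into a nonzero multiple of the third expression in (ii), respectively (iii).

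The main technical obstacle will be the clean evaluation of $\psi(0)$: one must subtract the $n$-component of $dN(\eta)$ and verify that the remainder, which a priori involves messy derivatives of $|n|_E$, collapses to a nonvanishing scalar multiple of the stated ``difference''. Once this identity is secured, the three cases follow immediately by reading off the KRSUY and FSUY criteria.
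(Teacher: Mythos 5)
Your proposal is correct and follows exactly the route the paper's framework presupposes: the paper itself states this result as a Fact cited from \cite{T} and \cite{Akamine} without proof, but your choices of singular direction and null direction, and your identities $\delta \propto \frac{(g_1)_u}{g_1^2\hat\omega_1}+\frac{(g_2)_v}{g_2^2\hat\omega_2}$ and $\psi \propto \frac{(g_1)_u}{g_1^2\hat\omega_1}-\frac{(g_2)_v}{g_2^2\hat\omega_2}$ (up to nonvanishing factors), coincide with the formulas \eqref{eq:xi_eta} and the displayed expressions for $\delta$ and $\psi$ that the paper records in the proof of Theorem \ref{Thm:CBF_CS1}. The only point to make airtight is the one you already flag: $\psi(0)$ carries the extra factor $\det(df(\gamma'),e_1,e_2)$ with $e_1=(1,-1,g_2)$, $e_2=(1,1,g_1)$, whose nonvanishing (when $\delta(0)\neq0$) follows from $\det(f_u,e_1,e_2)=\hat\omega_1(1+g_1^2)^2/g_1\neq0$, and in case (ii), where $\psi(0)$ vanishes identically along $\gamma$ at $p$, the front condition must be read off directly from $dn(\eta)\not\parallel 0$, which again reduces to $Ag_2-Bg_1\neq0$.
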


\begin{remark}
Here, we use slightly different but equivalent criteria to those in \cite{Akamine}.
\end{remark}

As shown in \cite{FSUY}, cuspidal edges, swallowtails and cuspidal cross caps are generic singularities on maximal surfaces and Fact \ref{Fact: T-lemma} shows that they also appear naturally on timelike minimal surfaces. However, as shown below, the appearance of other singularities is somewhat different from that of maximal surfaces. 
%The generic singularities of minfaces are the same as in maxfaces, see also \cite{FSUY}. However, as shown below, the appearance of other singularities is somewhat different from that of maxfaces.  

\subsection{Cuspidal butterfly and cuspidal $S_1^\pm$ singularity}
In this subsection, we give criteria for cuspidal butterfly and cuspidal $S_1^+$ singularity which appear in more complicated situations than those of the above singularities. Criteria for cuspidal buttefly and cuspidal $S_1^\pm$ singularity on frontals were given by Izumiya and Saji \cite{IS} and Saji \cite{S1} as follows.

\begin{fact}[\cite{IS}]\label{Butterfly_criteria}
Let $f\colon U\subset\mathbb{R}^2 \longrightarrow \mathbb{R}^3$ be a front with the unit normal $n$ and $p\in U$ be a non-degenerate singular point of $f$. Take a singular curve $\gamma(t)$ with $\gamma(0)=p$ and a null vector field $\eta(t)$.
 Then, $f$ is $\mathcal{A}$-equivalent to the cuspidal butterfly at $p$ if and only if $\delta(0)=\delta'(0)=0$ and $\delta''(0)\neq0$, where $\delta$ is the function defined in \eqref{eq:delta_psi}.
\end{fact}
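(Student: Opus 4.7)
My plan is to adapt the general strategy used to prove the criteria for cuspidal edges and swallowtails on fronts: reduce to coordinates adapted to the singular curve $\gamma$ and the null vector field $\eta$, translate the hypothesis on $\delta$ into contact-order information between $\gamma$ and the $\eta$-flow, and appeal to the Arnold--Zakalyukin classification of Legendrian wave-front singularities. The guiding principle is that non-degenerate singular-point germs of fronts are classified by the $A_k$-series --- cuspidal edge ($k=2$), swallowtail ($k=3$), cuspidal butterfly ($k=4$), and so on --- and the index is detected exactly by the order of vanishing of $\delta(t)=\det(\gamma'(t),\eta(t))$ at $t=0$.

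For the only-if direction I would first verify the conditions on the normal form $f_{CBF}(u,v)=(u,5v^4+2uv,4v^5+uv^2-u^2)$ itself. One computes that its singular set is $\{u+10v^3=0\}$, parameterized by $\gamma(t)=(-10t^3,t)$; along $\gamma$ one has $f_v\equiv 0$, so the null direction is spanned by $\partial_v$, giving $\eta(t)=(0,1)$ and $\delta(t)=-30t^2$. In particular $\delta(0)=\delta'(0)=0$ and $\delta''(0)\neq 0$. Because $\delta$ is well defined up to a nowhere-vanishing scalar multiple under source and target diffeomorphisms together with reparameterizations of $\gamma$ and rescalings of $\eta$, the vanishing pattern $\delta(0)=\delta'(0)=0$, $\delta''(0)\neq 0$ is an $\mathcal{A}$-invariant of front germs and must therefore hold for any front $\mathcal{A}$-equivalent to $f_{CBF}$.

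For the if direction, given a front $f$ with non-degenerate $p$ and $\delta$ vanishing to order exactly two at $t=0$, I would choose coordinates centered at $p$ so that the singular curve is $\{v=0\}$ with $\gamma(t)=(t,0)$, extend $\eta$ smoothly to a neighborhood, and Taylor-expand $f$ about $p$. The assumption $\delta(0)=0$ forces the null direction at $p$ to be tangent to $\gamma$, and the conditions on $\delta'(0)$ and $\delta''(0)$ control the order of contact of the $\eta$-integral curves with $\gamma$. Using that the Legendrian lift $L=(f,n)$ is an immersion, the germ of $f$ is determined up to $\mathcal{A}$-equivalence by the contact class of a Legendrian generating function $h$, and the order-two vanishing of $\delta$ at $p$ corresponds precisely to $h$ having an $A_4$-type critical point. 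By the Arnold--Zakalyukin classification of Legendrian map-germs, $f$ is then $\mathcal{A}$-equivalent to the $A_4$ wave-front, which is $f_{CBF}$.

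The main obstacle is making the translation ``order of vanishing of $\delta$ $=$ order of the $A_k$ critical point of the generating function $h$'' completely rigorous while keeping track of the front structure; this is the step where the real work lies. I would handle it either by invoking the Arnold--Zakalyukin classification directly --- the cleanest route, reducing the statement to a named result --- or by an explicit inductive sequence of source and target coordinate changes that successively kill the Taylor coefficients of $f$ beyond those of $f_{CBF}$, using $\delta''(0)\neq 0$ at the last stage to remove the remaining obstruction. The latter route is conceptually clear but computationally the most demanding part of the argument.
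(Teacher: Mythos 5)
You should first note that the paper offers no proof of this statement at all: it is labelled a \emph{Fact} and quoted from Izumiya--Saji \cite{IS}, so there is no in-paper argument to compare against. Judged on its own terms, your ``only if'' half is essentially complete: the computation of the singular set $\{u+10v^3=0\}$ of $f_{CBF}$, of $\gamma(t)=(-10t^3,t)$, $\eta=\partial_v$ and $\delta(t)=-30t^2$ is correct, and since for a non-degenerate singular point the singular curve and the null direction are determined up to regular reparametrization and multiplication by nowhere-vanishing functions (and transform by $\det(d\Phi)\neq 0$ under source diffeomorphisms), the order of vanishing of $\delta$ at $p$ is indeed an $\mathcal{A}$-invariant. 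That part is fine.

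The ``if'' direction, however, contains a genuine gap rather than a proof. Your guiding principle --- ``non-degenerate singular-point germs of fronts are classified by the $A_k$-series, and the index is detected exactly by the order of vanishing of $\delta$'' --- is, for $k=4$, precisely the statement to be proved, so invoking it is circular. Nor can one simply ``appeal to the Arnold--Zakalyukin classification'': that classification describes \emph{generic} (stable) Legendrian map-germs, and the cuspidal butterfly is not stable for fronts in $\mathbb{R}^3$ (it occurs generically only in one-parameter families); Zakalyukin's correspondence between $\mathcal{K}$-classes of generating functions and diffeomorphism classes of fronts requires the generating family to be a $P$-$\mathcal{K}$-versal unfolding of its critical point. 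The real content of the theorem is exactly the step you defer: showing that the hypotheses $d\lambda_p\neq 0$, $\delta(0)=\delta'(0)=0$, $\delta''(0)\neq 0$ force the generating family of the Legendrian lift to be a versal unfolding of an $A_4$ point --- not merely that the organizing function has an $A_4$ critical point. Without that versality argument (or, alternatively, the explicit inductive normalization of the Taylor expansion you mention as the second route), the conclusion does not follow; one cannot a priori exclude front germs with $\delta$ vanishing to order exactly two that fail some higher-order condition and are not butterflies. This is the argument actually carried out in \cite{IS}, and it is the part your proposal leaves entirely open.
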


\begin{fact}[\cite{S1}]\label{cS_criteria}
Let $f\colon U\subset\mathbb{R}^2 \longrightarrow \mathbb{R}^3$ be a frontal with the unit normal $n$ and $p\in U$ be a non-degenerate singular point of $f$. Take a singular curve $\gamma(t)$ with $\gamma(0)=p$ and a null vector field $\eta(t)$.
 Then, $f$ is $\mathcal{A}$-equivalent to the cuspidal $S_1^+$ singularity (resp.~the cuspidal $S_1^-$ singularity) at $p$ if and only if the followoing conditions hold.
 \begin{itemize}
 \item[(i)] $\delta(0)\neq 0$,
 \item[(ii)] $\psi(0)=\psi'(0)=0$ and $A:=\psi''(0)\neq 0$,
 \item[(iii)] there exists a regular curve $c\colon (-\varepsilon,\varepsilon)\longrightarrow U$ and $l\in \mathbb{R}$ such that $c(0)=p$, $c'(0)$ is parallel to $\eta(0)$, $\hat{c}''(0)\neq 0$, $\hat{c}'''(0)=\ell \hat{c}''(0)$ and
 \[
 B:=\det{\left(df(\gamma'(0)), \hat{c}''(0), 3\hat{c}^{(5)}-10\ell \hat{c}^{(4)}\right)} \neq0
 \] hold, where $\hat{c}:=f\circ c$,
 \item[(iv)] the product $AB$ is positive (resp.~negative). 
 \end{itemize}
\end{fact}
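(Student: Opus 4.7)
The plan is to establish $\mathcal{A}$-equivalence with the normal form $f_{CS_1^{\pm}}(u,v) = (u, v^2, v^3(u^2 \pm v^2))$ by reducing $f$ via successive source and target coordinate changes so as to match this Taylor data. First, condition (i), $\delta(0)\neq 0$, guarantees that the null direction $\eta$ and the singular direction $\gamma'$ are transverse at $p$, so I would choose adapted source coordinates $(u,v)$ with $\gamma(t) = (t,0)$ and $\eta = \partial_v$. Since $p$ is non-degenerate and $f$ is a frontal, $f_v$ vanishes on $\{v=0\}$, yielding the factorization $f(u,v) = f(u,0) + \tfrac{1}{2}v^2\, h(u,v)$ with $h(u,0)\neq 0$. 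By a target isometry I may then align $f_u(u,0)$ and $h(u,0)$ with the first and second coordinate axes of $\mathbb{R}^3$, so the first two components of $f$ reproduce $(u, v^2)$ modulo higher order terms.

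Next, I would use the geometric meaning of $\psi = \det(df(\gamma'),n,dn(\eta))$ to control the third component. Along the singular curve, $\psi$ is (up to a non-vanishing factor) the coefficient governing the leading behavior of the third component in the $v$-direction: $\psi(0)\neq 0$ characterizes the cuspidal edge, and each successive vanishing pushes this coefficient to higher order in $u$. The hypotheses $\psi(0) = \psi'(0) = 0$ and $\psi''(0) = A \neq 0$ therefore force the third component to take the shape $v^3\bigl(\tfrac{A}{2}u^2 + O(u^3) + O(v)\bigr)$, reproducing the $v^3 u^2$ contribution of the normal form.

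Condition (iii) then pins down the quintic contribution. A regular curve $c$ with $c'(0)\parallel\eta$ produces a cuspidal space curve $\hat c = f\circ c$ (because $df(\eta) = 0$), and the combination $3\hat c^{(5)}-10\ell\hat c^{(4)}$ is constructed to be insensitive to the reparametrization freedom in $c$ (which acts on $\ell$), so $B$ is a genuine jet-invariant in the direction transverse to the singular set. Non-vanishing of $B$ matches and fixes the $\pm v^5$ coefficient in the normal form. Finally, the sign of $AB$ is an $\mathcal{A}$-modulus distinguishing $S_1^+$ from $S_1^-$, reflecting the fact that the two normal forms lie in different path components of the stratum of jets realizing these incidences.

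The main obstacle is the sufficiency direction. Matching jets is routine, but producing explicit source and target diffeomorphism germs that simultaneously kill all higher-order remainders requires a delicate inductive construction. The clean way is to invoke the recognition machinery developed in \cite{S1} (Malgrange preparation plus $\mathcal{A}$-finite determinacy for frontal map germs), and to verify that at this codimension the complete list of $\mathcal{A}$-invariants reduces to $\psi(0)$, $\psi'(0)$, $A=\psi''(0)$, the quantity $B$, and the sign of $AB$. Once this correspondence is in place, finite determinacy collapses the full classification down to the jet-level computation sketched above.
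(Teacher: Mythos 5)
This statement is not proved in the paper at all: it is imported verbatim as a Fact from Saji \cite{S1}, and the paper's only obligation is the citation. So there is no internal argument to compare yours against, and your attempt has to be judged as a standalone proof of Saji's criterion. As such it is a reasonable heuristic outline --- the adapted coordinates from $\delta(0)\neq 0$, the factorization $f(u,v)=f(u,0)+\tfrac12 v^2h(u,v)$, and the reading of $\psi,\psi',\psi''$ as successive $u$-coefficients of the $v^3$-part of the third component are all the right starting moves --- but it is not a proof, and the missing pieces are the substantive ones.

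The decisive gap is that the sufficiency direction, passing from the matched jet to actual diffeomorphism germs, is exactly what you defer to ``the recognition machinery developed in \cite{S1}'', i.e.\ to the very theorem being proved; as a standalone argument this is circular. Beyond that, three things are absent. First, the necessity direction is never addressed: one must show that (i)--(iv) are invariant under $\mathcal{A}$-equivalence and under the choices of $\gamma$, $\eta$, $n$, and then verify them on the normal forms $f_{CS_1^{\pm}}$. Second, condition (iii) is existential in $c$ and $\ell$, so before (iv) can function as a criterion one has to prove that the sign of $AB$ does not depend on the admissible choice of $c$; you assert that $3\hat{c}^{(5)}-10\ell\hat{c}^{(4)}$ is ``insensitive to the reparametrization freedom'' without computation, and this invariance (under replacing $c$ by any other curve tangent to $\eta$, not merely under reparametrizing a fixed $c$) is precisely where the coefficients $3$ and $10$ come from. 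Third, the reduction of the third component to $v^3\bigl(\tfrac{A}{2}u^2+\cdots\bigr)$ and the identification of $B$ with the $\pm v^5$ coefficient require an intermediate normal form separating even and odd powers of $v$ together with a finite-determinacy estimate for this jet; none of this is carried out. To make the argument self-contained you would need to reproduce the invariance and determinacy computations of \cite{S1} rather than invoke them.
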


To use Fact \ref{cS_criteria} for timelike minimal surfaces, we use the following Lemma.

\begin{lemma}\label{Lem:Lemma35_OT}
Let $f\colon \Sigma \to \mathbb{R}^3$ be a frontal and $p\in \Sigma$ be a non-degenerate singular point. Let $\gamma=\gamma(t)$ be a singular curve with $\gamma(0)=p$ and $\xi, \eta$ be smooth vector fields near $p$ so that $\xi=\gamma', df(\eta)=0$ and $\det{(\xi,\eta)}>0$ along $\gamma$. Assume that $f$ satisfies the following conditions
\begin{itemize}
\item $\eta^3f(p)=\bm{0}$, 
\item $\xi f(p)$ and $\eta^2f(p)$ are linearly independent, and
\item $\xi\eta^3f(p)$ and $\eta^2f(p)$ are linearly dependent.
\end{itemize}
Then there exists a regular curve $c=c(t)$ satisfying the condition (iii) of Fact \ref{cS_criteria} with $B=3\det{(\xi f, \eta^2f, \eta^5f)}(p)$, where $\eta^k f$ is the k-times derivative $\eta \cdots \eta f$.
\end{lemma}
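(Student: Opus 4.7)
The plan is to take $c$ to be the integral curve of $\eta$ through $p$ and verify the conditions of (iii) of Fact \ref{cS_criteria} directly. Concretely, let $c\colon(-\varepsilon,\varepsilon)\to\Sigma$ be the unique curve with $c(0)=p$ and $c'(t)=\eta(c(t))$. Since $\det(\xi,\eta)(p)>0$ forces $\eta(p)\neq\bm 0$, this $c$ is a regular curve, and $c'(0)=\eta(0)$ is automatically parallel to $\eta(0)$; so the requirements $c(0)=p$ and $c'(0)\parallel\eta(0)$ in (iii) come for free.

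The heart of the argument is the identity
\[
\hat c^{(k)}(0)=(\eta^k f)(p)\qquad(k\geq 1),
\]
where $\hat c:=f\circ c$. I would prove it by induction on $k$: since $c$ is an integral curve of $\eta$, one has $\hat c'(t)=df_{c(t)}(\eta(c(t)))=(\eta f)(c(t))$, and iterating the chain rule gives $\hat c^{(k+1)}(t)=(\eta^{k+1}f)(c(t))$. With this identity the three hypotheses translate directly: the linear independence of $\xi f(p)$ and $\eta^2 f(p)$ yields $\hat c''(0)=\eta^2 f(p)\neq\bm 0$, and $\eta^3 f(p)=\bm 0$ yields $\hat c'''(0)=\bm 0$, so the choice $\ell=0$ makes the relation $\hat c'''(0)=\ell\,\hat c''(0)$ hold trivially.

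Finally, with $\ell=0$ and $\hat c^{(5)}(0)=\eta^5 f(p)$, the combination $3\hat c^{(5)}(0)-10\ell\,\hat c^{(4)}(0)$ collapses to $3\eta^5 f(p)$, and substituting into the defining determinant for $B$ in Fact \ref{cS_criteria}(iii) gives
\[
B=\det\bigl(df(\xi(p)),\eta^2 f(p),3\eta^5 f(p)\bigr)=3\det(\xi f,\eta^2 f,\eta^5 f)(p),
\]
as claimed. The only nontrivial step is the induction identity $\hat c^{(k)}(0)=(\eta^k f)(p)$; everything else is substitution. The third hypothesis $\xi\eta^3 f(p)\parallel\eta^2 f(p)$ does not appear explicitly in this particular construction of $c$; I expect its role is to guarantee that the formula for $B$ remains stable under admissible perturbations of $c$ away from the integral curve of $\eta$, since any resulting correction term lies in the $\eta^2 f(p)$-direction and hence drops out of the determinant.
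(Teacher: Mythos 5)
Your proof is correct, but it takes a genuinely different route from the paper's. The paper (following Ogata--Teramoto) straightens the vector fields to $\xi=\partial_\sigma$, $\eta=\partial_\tau$ and builds $c(\tau)=(\sigma(\tau),\tau)$ by the implicit function theorem so that $\langle f\circ c,\xi f(p)\rangle_E\equiv 0$; one then gets $\sigma'(0)=\sigma'''(0)=0$ but, unlike the maximal-surface case, not $\sigma''(0)=0$, so $\hat c''(0)=\sigma''(0)\,\xi f(p)+\eta^2f(p)$ and $\hat c^{(5)}(0)$ acquires a term proportional to $\xi\eta^3f(p)$; the third hypothesis is invoked precisely to make that term drop out of the determinant, giving $B=3\det(\xi f,\eta^2f,\eta^5f)(p)$. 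Your integral curve of $\eta$ is, in those coordinates, just $\tau\mapsto(0,\tau)$, so all cross terms vanish identically and the single identity $\hat c^{(k)}(0)=(\eta^kf)(p)$ does all the work --- shorter and more transparent, and since Fact~\ref{cS_criteria}(iii) is an existence statement, choosing this particular admissible curve creates no gap. The trade-off is exactly the one you flagged: your construction never uses the hypothesis that $\xi\eta^3f(p)$ and $\eta^2f(p)$ are linearly dependent, and your guess about its role is right. It is not decorative: without it the value, and even the sign, of $B$ depends on which admissible curve is chosen. For instance, for $f(u,v)=(u,v^2,u^2v^3+v^5+\epsilon uv^3)$ with $\epsilon\neq0$ one has $\xi\eta^3f(0)=(0,0,6\epsilon)\not\parallel\eta^2f(0)=(0,2,0)$, and the admissible curves $c_a(t)=(at^2,t)$ yield $\ell=0$ and $B=720(1+\epsilon a)$, which changes sign with $a$; under the third hypothesis this ambiguity disappears, which is what makes the number produced by your curve (and hence the application of the criterion) meaningful. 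A sentence acknowledging this would make your argument self-contained, as the paper's proof records it implicitly by actually using the hypothesis.
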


The proof of Lemma \ref{Lem:Lemma35_OT} will be given in Appendix \ref{sec:app}.

\begin{remark}\label{remark:OT}
The proof is similar to the case of maximal surfaces in \cite{OT}. However, the condition $\langle \xi f(p), \eta^2f(p) \rangle_E =0$ in \cite[Lemma 35]{OT} does not hold in general for the case of timelike minimal surfaces. So we need other assumptions as in Lemma \ref{Lem:Lemma35_OT} showing the same conclusion.
\end{remark}

By using the criteria above, we prove the following characterizations of the cuspidal butterfly and the cuspidal $S_1^+$ singularity on timelike minimal surfaces.

\begin{theorem}\label{Thm:CBF_CS1}
Let $f$ and $p$ be as in Fact \ref{Fact: T-lemma}.  Then $f$ is $\mathcal{A}$-equivalent to 
\begin{itemize}
\item[(i)] the cuspidal butterfly at $p$ if and only if 
\begin{align*}
\frac{(g_1)_u}{g_1^2\hat{\omega}_1}-\frac{(g_2)_v}{g_2^2\hat{\omega}_2}&\neq 0,\quad \frac{(g_1)_u}{g_1^2\hat{\omega}_1}+\frac{(g_2)_v}{g_2^2\hat{\omega}_2}=0,\quad
\frac{g_1}{(g_1)_u}\left(\frac{(g_1)_u}{g_1^2\hat{\omega}_1}\right)_u-\frac{g_2}{(g_2)_v} \left(\frac{(g_2)_v}{g_2^2\hat{\omega}_2}\right)_v= 0,\ \text{and} \\
&\frac{g_1}{(g_1)_u}\left[\frac{g_1}{(g_1)_u}\left(\frac{(g_1)_u}{g_1^2\hat{\omega}_1}\right)_u\right]_u + \frac{g_2}{(g_2)_v} \left[\frac{g_2}{(g_2)_v} \left(\frac{(g_2)_v}{g_2^2\hat{\omega}_2}\right)_v\right]_v\neq 0
\text{ at }p,
\end{align*}

\item[(ii)] the cuspidal $S_1^+$ singularity at $p$ if and only if 
\begin{align*}
\frac{(g_1)_u}{g_1^2\hat{\omega}_1}-\frac{(g_2)_v}{g_2^2\hat{\omega}_2}&=0,\quad \frac{(g_1)_u}{g_1^2\hat{\omega}_1}+\frac{(g_2)_v}{g_2^2\hat{\omega}_2}\neq0,\quad
\frac{g_1}{(g_1)_u}\left(\frac{(g_1)_u}{g_1^2\hat{\omega}_1}\right)_u+\frac{g_2}{(g_2)_v} \left(\frac{(g_2)_v}{g_2^2\hat{\omega}_2}\right)_v= 0,\ \text{and} \\
&\frac{g_1}{(g_1)_u}\left[\frac{g_1}{(g_1)_u}\left(\frac{(g_1)_u}{g_1^2\hat{\omega}_1}\right)_u\right]_u - \frac{g_2}{(g_2)_v} \left[\frac{g_2}{(g_2)_v} \left(\frac{(g_2)_v}{g_2^2\hat{\omega}_2}\right)_v\right]_v\neq 0
\text{ at }p,
\end{align*}
\end{itemize}
Moreover, there is no cuspidal $S_1^-$ singularity on any minface.
\end{theorem}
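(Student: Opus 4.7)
The plan is to apply Fact \ref{Butterfly_criteria} for part (i) and Fact \ref{cS_criteria} (together with Lemma \ref{Lem:Lemma35_OT}) for part (ii), with every ingredient computed directly from the Weierstrass representation \eqref{eq:null_decomp}. First I would fix a singular point $p\in\Sigma^g(f)\setminus\Sigma^\omega(f)$, so that $g_1(p)g_2(p)=1$ and $\hat{\omega}_1\hat{\omega}_2(p)\neq 0$. Differentiating \eqref{eq:null_decomp} gives, at $p$, $f_v=-\tfrac{\hat{\omega}_2}{g_1^2\hat{\omega}_1}f_u$, so the null vector field may be chosen as $\eta=\hat{\omega}_2\,\partial_u+g_1^2\hat{\omega}_1\,\partial_v$. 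Implicit parametrisation of $g_1g_2=1$ yields the singular direction $\xi=\gamma'=g_1(g_2)_v\,\partial_u-(g_1)_u g_2\,\partial_v$, while $n$ is given by \eqref{eq:unitnormal}. In this setup the non-degeneracy $d\lambda_p\neq 0$ is equivalent to the first inequality in each part, exactly as in Fact \ref{Fact: T-lemma}.

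For (i), a short computation gives $\delta(0)=\det(\gamma'(0),\eta(0))=g_1\hat{\omega}_1\hat{\omega}_2\bigl[\tfrac{(g_1)_u}{g_1^2\hat{\omega}_1}+\tfrac{(g_2)_v}{g_2^2\hat{\omega}_2}\bigr]$ at $p$, so $\delta(0)=0$ is the second displayed equation. I would then compute $\delta'(0)$ and $\delta''(0)$ by iterating $\xi$-differentiation along the singular curve: because $\xi$ acts on any split function $\alpha(u)+\beta(v)$ as $g_1(g_2)_v\alpha'(u)-(g_1)_u g_2\beta'(v)$, each successive $\xi$-derivative of a $u$-dependent factor naturally produces the operator $\frac{g_1}{(g_1)_u}\partial_u$ (and analogously $\frac{g_2}{(g_2)_v}\partial_v$ on the $v$-side). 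The result is precisely the third equation in (i) as $\delta'(0)=0$ and the fourth as $\delta''(0)\neq 0$, so Fact \ref{Butterfly_criteria} closes part (i).

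For (ii), the same setup turns $\delta(0)\neq 0$ into the second inequality. Using \eqref{eq:unitnormal}, the function $\psi(t)=\det(df(\gamma'(t)),n(\gamma(t)),dn(\eta(t)))$ satisfies $\psi(0)\propto\tfrac{(g_1)_u}{g_1^2\hat{\omega}_1}-\tfrac{(g_2)_v}{g_2^2\hat{\omega}_2}$ at $p$, which is the very quantity controlling cuspidal cross caps in Fact \ref{Fact: T-lemma}(iii). Its iterated $\xi$-derivatives, via the operator identity used in (i), give the third and fourth displayed conditions of (ii) as $\psi(0)=\psi'(0)=0$ and $A:=\psi''(0)\neq 0$. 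For condition (iii) of Fact \ref{cS_criteria} I would apply Lemma \ref{Lem:Lemma35_OT}, whose three hypotheses follow from the wave equation $f_{uv}=\bm{0}$ and the already-imposed vanishing of $\psi$ and $\psi'$ at $p$; this defines $B=3\det(\xi f,\eta^2 f,\eta^5 f)(p)$.

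The hard part will be the final step: pinning down the sign of $AB$. Here one must compute $B$ explicitly from \eqref{eq:null_decomp} and compare it with $A$. I expect that, once all the vanishing conditions of (ii) are imposed, both $A$ and $B$ factor as
\[
\frac{g_1}{(g_1)_u}\!\left[\frac{g_1}{(g_1)_u}\!\left(\frac{(g_1)_u}{g_1^2\hat{\omega}_1}\right)_u\right]_u - \frac{g_2}{(g_2)_v}\!\left[\frac{g_2}{(g_2)_v}\!\left(\frac{(g_2)_v}{g_2^2\hat{\omega}_2}\right)_v\right]_v
\]
times auxiliary factors whose product is manifestly positive, thanks to the Lorentzian signature---in sharp contrast to the Riemannian situation of \cite{OT}, where the analogous product can be of either sign. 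Granting this sign computation, $AB>0$ always on minfaces, which simultaneously yields condition (iv) of Fact \ref{cS_criteria} for cuspidal $S_1^+$ and excludes cuspidal $S_1^-$.
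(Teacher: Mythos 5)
Your proposal follows essentially the same route as the paper: the same choices of singular direction and null vector field (up to nonzero scalar factors), reduction of $\delta,\psi$ and their derivatives along the singular curve to the operators $\frac{g_1}{(g_1)_u}\partial_u$, $\frac{g_2}{(g_2)_v}\partial_v$, verification of the hypotheses of Lemma \ref{Lem:Lemma35_OT}, and the explicit computation showing $A$ and $B$ are each proportional to the same quantity so that $AB$ is a positive square, which yields both the $S_1^+$ criterion and the nonexistence of $S_1^-$. The only part you leave as an expectation---the factorization of $A$ and $B$---is exactly what the paper's computation delivers, so the plan is sound.
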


\begin{proof}[Proof of Theorem \ref{Thm:CBF_CS1}]
First, we prove the claim (i). Since $p$ is a non-degenerate $g$-singular point, we can take a singular curve $\gamma=\gamma(t)$ satisfying $\gamma(0)=p$ and $g_1(\gamma(t))g_2(\gamma(t))=1$. Hence we can parametrize the singular curve $\gamma$ and take a null vector field $\eta$ as follows. 
\begin{equation}\label{eq:xi_eta}
\gamma'=\frac{(g_2)_v}{g_2}\partial_u-\frac{(g_1)_u}{g_1}\partial_v,
\quad
\eta=\frac{1}{g_1\hat{\omega}_1}\partial_u+\frac{1}{g_2\hat{\omega}_2}\partial_v
\quad \text{on $\gamma$}.
\end{equation}
The functions $\delta$ and $\psi$ defined in $\eqref{eq:delta_psi}$ are written as
\begin{equation}\label{eq:delta_psi}
\delta=\frac{(g_1)_u}{g_1^2\hat{\omega}_1}+\frac{(g_2)_v}{g_2^2\hat{\omega}_2},
\quad
\psi=\alpha \left(\frac{(g_1)_u}{g_1^2\hat{\omega}_1}-\frac{(g_2)_v}{g_2^2\hat{\omega}_2}\right)
\ \text{on $\gamma$},
\end{equation}
where $\alpha=-\hat{\omega}_1\hat{\omega}_2\delta/2$. For the function
\[
D:=\frac{g_1g_2}{(g_1)_u(g_2)_v}\delta'
=\frac{g_1}{(g_1)_u}\left(\frac{(g_1)_u}{g_1^2\hat{\omega}_1}\right)_u-\frac{g_2}{(g_2)_v}\left(\frac{(g_2)_v}{g_2^2\hat{\omega}_2}\right)_v,
\]
$\delta'(0)=0$ is equivalent to $D(0)=0$ and under this condition ${\delta}''(0)\neq0$ is also equivalent to ${D}'(0)\neq0$.
By (i) in Fact \ref{Fact: T-lemma} and considering the quantity
\[
\frac{g_1g_2}{(g_1)_u(g_2)_v}D'=\frac{g_1}{(g_1)_u}\left[\frac{g_1}{(g_1)_u}\left(\frac{(g_1)_u}{g_1^2\hat{\omega}_1}\right)_u\right]_u + \frac{g_2}{(g_2)_v} \left[\frac{g_2}{(g_2)_v} \left(\frac{(g_2)_v}{g_2^2\hat{\omega}_2}\right)_v\right]_v,
\]
we obtain the assertion (i). 

Next, we prove the assertion (ii).  Let us define the functions $\varphi_i$, $\phi_i$ and $\Phi_i$ ($i=1,2$) as follow.
\[
\varphi_i:=\frac{g_i'}{g_i^2\hat{\omega}_i},\quad \phi_i:=\frac{g_i}{g_i'}\varphi_i',\quad \Phi_i:=\frac{g_i}{g_i'}\phi_i',\quad i=1,2,
\]
where the prime means the derivative with respect to $u$ for $i=1$ and with respect to $v$ for $i=2$, respectively.
By \eqref{eq:delta_psi}, $\delta=\varphi_1+\varphi_2$ and $\psi=-\frac{1}{2}\hat{\omega}_1\hat{\omega}_2(\varphi_1+\varphi_2)(\varphi_1-\varphi_2)$ hold, and hence the conditions (i) and (ii) in Fact \ref{cS_criteria} can be rewritten as
\begin{equation}\label{eq:phicondition}
\varphi_1=\varphi_2 \neq 0, \quad \phi_1=-\phi_2,\quad  A=-\frac{1}{2}\hat{\omega}_1\hat{\omega}_2(g_1)_u^2(g_2)_u^2(\varphi_1+\varphi_2)(\Phi_1-\Phi_2)\neq 0.
\end{equation}
This means that the conditions (i) and (ii) in Fact \ref{cS_criteria} are equivalent to the desired condition (ii) in Theorem \ref{Thm:CBF_CS1}. Thus, we show that the remaining conditions (iii) and (iv) with $AB>0$ in Fact \ref{cS_criteria} follow automatically from the conditions (i) and (ii). 

By using $\xi=\gamma'$ and $\eta$ which are defined to satisfy equation \eqref{eq:xi_eta} and the relation $g_1g_2=1$ along $\gamma$, the vector fields $\xi f$, $\eta^kf$ ($1\leq k \leq 5$) and $\xi\eta^3f$ at $p$ can be computed as follows.
\begin{align*}
\xi f&=-\frac{\hat{\omega}_1\hat{\omega}_2}{2}(\varphi_1+\varphi_2)(g_1+g_2,g_1-g_2,-2),\quad \eta f=\bm{0},\\
\eta^2f &= \frac{1}{2}(\varphi_1+\varphi_2)(-g_1+g_2,-g_1-g_2,0),\quad \eta^3f = \bm{0},\\
\eta^4f &= \frac{1}{2}\left[\varphi_1(\varphi_1^2+\phi_1^2+\varphi_1\Phi_1) + \varphi_2(\varphi_2^2+\phi_2^2+\varphi_2\Phi_2)\right](-g_1+g_2, -g_1-g_2,0)\\
&-\frac{3}{2}(\varphi_1^2\phi_1+\varphi_2^2\phi_2)(g_1+g_2,g_1-g_2,0),\\
\eta^5f &=  -\frac{1}{2}\left[\varphi_1^2(\varphi_1^2+7\phi_1^2+4\varphi_1\Phi_1) - \varphi_2^2(\varphi_2^2+7\phi_2^2+4\varphi_2\Phi_2)\right](g_1+g_2, g_1-g_2,0)+ \alpha\eta^2f,\\
 \xi\eta^3f &= -\frac{3}{2}(g_1)_u(g_2)_v(\varphi_1\phi_1+\varphi_2\phi_2)(g_1+g_2, g_1-g_2,0)+\beta\eta^2f
\end{align*}
at $p$, where $\alpha$ and $\beta$ are real constants. Moreover, by using \eqref{eq:phicondition}, $\xi f, \eta^2f, \eta^5f$ and $\xi\eta^3f$ are written as
\begin{align*}
\xi f&=-\hat{\omega}_1\hat{\omega}_2\varphi_1(g_1+g_2,g_1-g_2,-2),\quad 
\eta^2f = \varphi_1(-g_1+g_2,-g_1-g_2,0), \\
\eta^5f &=  -2\varphi_1^3(\Phi_1-\Phi_2)(g_1+g_2, g_1-g_2,0)+ \alpha\eta^2f,\quad
 \xi\eta^3f =\beta\eta^2f.
\end{align*}
Therefore, $f$ satisfies the assumptions in Lemma \ref{Lem:Lemma35_OT} and hence $B$ is computed as
\begin{equation}\label{eq:B}
B=3\det{(\xi f, \eta^2f, \eta^5f)}(p)= 
-48\hat{\omega}_1\hat{\omega}_2\varphi_1^5(\Phi_1-\Phi_2).
\end{equation}
Finally, by \eqref{eq:phicondition} and \eqref{eq:B}, the quantity $AB$ in (iv) in Fact \ref{cS_criteria} satisfies 
\[
AB=48\hat{\omega}_1^2\hat{\omega}_2^2(g_1)_u^2(g_2)_u^2\varphi_1^6(\Phi_1-\Phi_2)^2>0,
\]
which proves the desired result.
\end{proof}

Let us see that certain singularities on timelike minimal surfaces are transferred to each other by the following anti-isometric transformation called the conjugation.

\begin{definition}\label{def:conjugate}
Let $(g_1, g_2,\hat{\omega}_1du, \hat{\omega}_2dv)$ be W-data of a generalized timelike minimal surface $f$. \begin{itemize}
\item The one-parameter family $\{f_\theta\}_{\theta \in \mathbb{R}}$ of generalized timelike minimal surafces with W-data $(g_1, g_2,e^\theta \hat{\omega}_1du, e^{-\theta}\hat{\omega}_2dv)$ is called the {\it associated family} of $f$. 
\item The generalized timelike minimal surface $\hat{f}$ with W-data $(g_1, g_2,\hat{\omega}_1du, -\hat{\omega}_2dv)$ is called the {\it conjugate surface} of $f$.
\end{itemize}
\end{definition}
The surfaces $f$ and $f_{\theta}$ are isometric each other. The surfaces $f$ and $\hat{f}$ are {\it anti-isometric} each other, that is, their first fundamental forms $\mathrm{I}_f$ and $\mathrm{I}_{\hat{f}}$ satisfy $\mathrm{I}_f=-\mathrm{I}_{\hat{f}}$. Hence, 
unlike the cases of minimal surfaces in $\mathbb{E}^3$ and maximal surfaces in $\mathbb{L}^3$, the conjugate surface of a timelike minimal surface does not belong to the associated family. Moreover, it is shown in \cite{Akamine2} that away from flat points the associated families $\{f_\theta \}$ of $f$ and $\{\hat{f}_\theta \}$ of $\hat{f}$ are the only isometric deformation and anti-isometric deformation of $f$, respectively, up to rigid motions in $\mathbb{L}^3$. 

By Definition \ref{def:conjugate} and Theorem \ref{Thm:CBF_CS1}, we obtain the following duality of singularities.
\begin{corollary}\label{cor:duality_CBF}
Let $f$ be a timelike minimal surface and $p$ be a singular point in $\Sigma^g(f)\setminus \Sigma^\omega(f)$. Then, $f$ is $\mathcal{A}$-equivalent to the cuspidal butterfly at $p$ if and only if $\hat{f}$ is $\mathcal{A}$-equivalent to the cuspidal $S_1^+$ singularity at $p$.
\end{corollary}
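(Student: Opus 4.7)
The plan is to reduce everything to the criteria in Theorem \ref{Thm:CBF_CS1}. Both the cuspidal butterfly and the cuspidal $S_1^+$ conditions there are expressed purely in terms of $(g_1,g_2,\hat{\omega}_1,\hat{\omega}_2)$, so the duality should follow from how these algebraic conditions transform under the substitution $\hat{\omega}_2\mapsto-\hat{\omega}_2$ that defines the conjugate surface $\hat{f}$.

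First I would verify that the hypothesis $p\in\Sigma^g(f)\setminus\Sigma^\omega(f)$ is preserved by conjugation. Since $\Sigma^g$ depends only on the product $g_1g_2$ and $\Sigma^\omega$ only on the vanishing of $\hat{\omega}_1\hat{\omega}_2$, both sets are invariant under flipping the sign of $\hat{\omega}_2$. Thus $p$ is also a singular point of $\hat{f}$ in $\Sigma^g(\hat{f})\setminus\Sigma^\omega(\hat{f})$, so Theorem \ref{Thm:CBF_CS1} applies to $\hat{f}$ at $p$ as well.

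Next, writing $g_1'=(g_1)_u$ and $g_2'=(g_2)_v$, I would abbreviate
\[
A_i=\frac{g_i'}{g_i^2\hat{\omega}_i},\qquad B_i=\frac{g_i}{g_i'}(A_i)',\qquad C_i=\frac{g_i}{g_i'}(B_i)',\qquad i=1,2,
\]
where the prime denotes the $u$-derivative for $i=1$ and the $v$-derivative for $i=2$. The key observation is that under $\hat{\omega}_2\mapsto-\hat{\omega}_2$ the quantities $A_1,B_1,C_1$ are unchanged while $A_2,B_2,C_2$ each change sign: the prefactor $g_i/g_i'$ is invariant, and differentiating preserves the sign flip inherited from $A_2$. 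Consequently the butterfly conditions $A_1-A_2\ne 0$, $A_1+A_2=0$, $B_1-B_2=0$, $C_1+C_2\ne 0$ for $f$ translate, after the substitution, into $A_1+A_2\ne 0$, $A_1-A_2=0$, $B_1+B_2=0$, $C_1-C_2\ne 0$, which are precisely the cuspidal $S_1^+$ conditions of Theorem \ref{Thm:CBF_CS1}(ii) read for $\hat{f}$.

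The only step requiring care is the sign bookkeeping in $B_2$ and $C_2$, but once this is confirmed the two sets of conditions are literally identical after relabeling, the substitution is involutive, and the corollary follows directly from Theorem \ref{Thm:CBF_CS1}.
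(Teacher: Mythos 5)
Your proposal is correct and is exactly the argument the paper intends: the corollary is stated as an immediate consequence of Definition \ref{def:conjugate} and Theorem \ref{Thm:CBF_CS1}, and your sign bookkeeping (the quantities built from $(g_2,\hat{\omega}_2)$ flip sign under $\hat{\omega}_2\mapsto-\hat{\omega}_2$ while those built from $(g_1,\hat{\omega}_1)$ are unchanged, interchanging the two sets of conditions) is the verification the paper leaves implicit. Your preliminary check that $\Sigma^g$ and $\Sigma^\omega$ are invariant under conjugation is a worthwhile detail that the paper does not spell out.
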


\begin{remark}
Ogata and Teramoto \cite{OT} proved the duality between cuspidal butterflies and cuspidal $S_1^-$ singularities on maximal surfaces. They also proved that there is no cuspidal $S_1^+$ singularity on these surfaces. In contrast, Theorem \ref{Thm:CBF_CS1} and Corollary  \ref{cor:duality_CBF} show a different aspect: cuspidal $S_1^+$ singularities appear on timelime minimal surfaces instead of cuspidal $S_1^-$ singularities.\end{remark}

\begin{example}
Let us consider the timelike minimal surface $f$ with W-data
\begin{align*}
g_1(u)=-\frac{\cos{u}}{1+\sin{u}},\ g_2(v)=\frac{\sinh{\phi}+\cos{v}\cosh{\phi}}{\sin{v}-\cosh{\phi}-\cos{v}\sinh{\phi}},\ \\
 \hat{\omega}_1(u)=-\frac{\mu}{2}(1+\sin{u}),\ \hat{\omega}_2(v)=\frac{\sin{v}-\cosh{\phi}-\cos{v}\sinh{\phi}}{2\mu},
\end{align*}
where $\phi=\arcsinh{(1/2)}$ and $\mu=(1+\sqrt{5})/2=\cosh{\phi}+\sinh{\phi}$.
Here, we show that the surface $f$ is $\mathcal{A}$-equivalent to the cuspidal butterfly at $(u,v)=\bm{0}$. Since $g_1g_2=1$ at $\bm{0}$ and the quantities in Theorem \ref{Thm:CBF_CS1} are computed as
\begin{align*}
\frac{(g_1)_u}{g_1^2\hat{\omega}_1}+&\frac{(g_2)_v}{g_2^2\hat{\omega}_2}
=
\frac{g_1}{(g_1)_u}\left(\frac{(g_1)_u}{g_1^2\hat{\omega}_1}\right)_u
=
\frac{g_2}{(g_2)_v} \left(\frac{(g_2)_v}{g_2^2\hat{\omega}_2}\right)_v
=0,\quad \frac{(g_1)_u}{g_1^2\hat{\omega}_1}+\frac{(g_2)_v}{g_2^2\hat{\omega}_2}=-4/\mu\neq0,& \\
&\text{ and } \frac{g_1}{(g_1)_u}\left[\frac{g_1}{(g_1)_u}\left(\frac{(g_1)_u}{g_1^2\hat{\omega}_1}\right)_u\right]_u + \frac{g_2}{(g_2)_v} \left[\frac{g_2}{(g_2)_v} \left(\frac{(g_2)_v}{g_2^2\hat{\omega}_2}\right)_v\right]_v=2\neq 0
\text{ at } \bm{0},&
\end{align*}
$f$ is $\mathcal{A}$-equivalent to the cuspidal butterfly at $\bm{0}$. By the duality in Corollary \ref{cor:duality_CBF}, the conjugate surface $\hat{f}$ is $\mathcal{A}$-equivalent to the cuspidal $S_1^+$ singularity at $\bm{0}$ (see Fig.~\ref{Fig:butterfly}).
We remark that the surface $f$ belongs to the associated family of a timelike minimal surface with planar curvature lines studied in \cite{ACO}. %In fact, the generating null curves of $f$ in \eqref{eq:null_decomp0} have constant lightlike curvatures. See \cite{ACO}, for more details. 
\end{example}
%%%%%%%%%%%%%%%%%
\vspace{2.0cm}
\begin{figure}[!h]
\begin{center}
\begin{tabular}{c}
%1
\hspace{+2.0cm}
\begin{minipage}{0.4\hsize}
\begin{center}
\vspace{-2.8cm}
\includegraphics[clip,scale=0.45,bb=0 0 355 389]{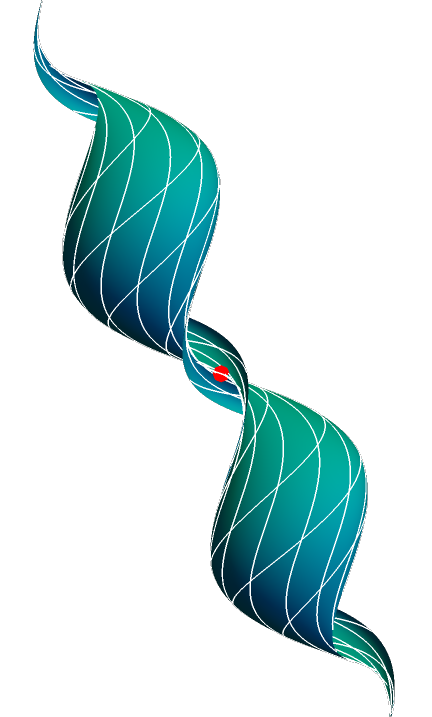}
\vspace{0.5cm}
\end{center}
\end{minipage}
%2
\hspace{0.3cm}
\begin{minipage}{0.4\hsize}
\begin{center}
\vspace{-4.55cm}
\includegraphics[clip,scale=0.45,bb=0 0 355 509]{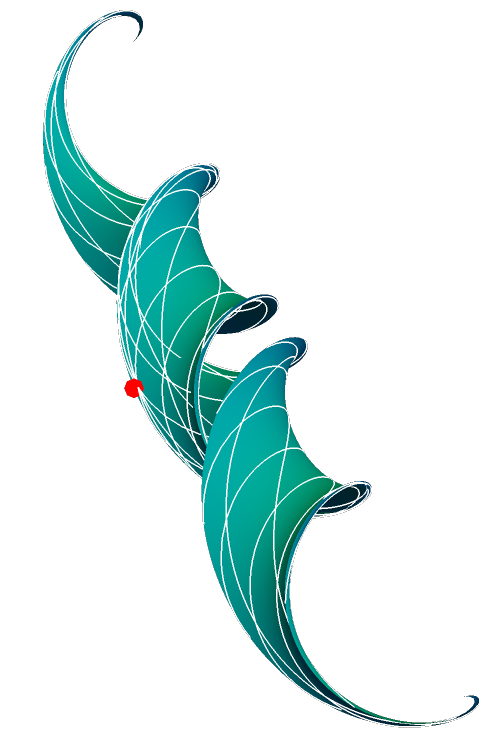}
\vspace{0.3cm}
\end{center}
\end{minipage}

\end{tabular}
\end{center}
\vspace{0.0cm}
\caption{Timelike minimal surfaces with a cuspidal butterfly (left) and with a cuspidal $S_1^+$ singularity (right). The red dots represent these singular points.}
 \label{Fig:butterfly}
\end{figure}
%%%%%%%%%%%%%%%%%%%%%%

\section{Singularities in $\Sigma^\omega(f)$}\label{sec:omega_sing}
In this section, we give some properties of $\omega$-singular points on a generalized timelike minimal surface.
\begin{lemma}\label{lemma_front_condition}
Let $p\in \Sigma^\omega(f)$ be a $\omega$-singular point satisfying $\text{rank}(df_p)=1$ of a generalized timelike minimal surface $f$ with W-data $(g_1, g_2,\hat{\omega}_1du, \hat{\omega}_2dv)$. Then the following statements hold.
\begin{itemize}
\item[(i)] $f$ is a front at  $p$ with $\hat{\omega}_1(p)=0$ (resp.\ with $\hat{\omega}_2(p)=0$) if and only if $(g_1)_u\neq 0$ (resp.\ $(g_2)_v\neq 0$) at $p$.
\item[(ii)] The singular point $p$ with $\hat{\omega}_1(p)=0$ (resp.\ with $\hat{\omega}_2(p)=0$) is non-degenerate if and only if $(\hat{\omega}_1)_u(p)\neq 0$ (resp.\ $(\hat{\omega}_2)_v(p)\neq 0$) and $g_1g_2(p)\neq1$.
\end{itemize}
\end{lemma}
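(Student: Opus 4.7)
The plan is a direct calculation based on the Weierstrass formulas \eqref{eq:null_decomp} and \eqref{eq:unitnormal}. By the symmetry $(g_1,\hat{\omega}_1,u)\leftrightarrow(g_2,\hat{\omega}_2,v)$ between the two generating null curves, it suffices to treat the case $\hat{\omega}_1(p)=0$. Since $\mathrm{rank}(df_p)=1$, we cannot simultaneously have $\hat{\omega}_2(p)=0$, so from \eqref{eq:null_decomp} we read off $f_u(p)=\bm{0}$ and $f_v(p)\neq\bm{0}$.

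For (i), because the Legendrian lift $L=(f,n)$ has differential spanned by $(f_u,n_u)$ and $(f_v,n_v)$ at $p$, and because $f_u(p)=\bm{0}$ while $f_v(p)\neq\bm{0}$, the map $L$ is an immersion at $p$ if and only if $n_u(p)\neq \bm{0}$. Differentiating \eqref{eq:unitnormal} and grouping terms gives
\begin{equation*}
n_u=\left(\tfrac{1}{|n|_E}\right)_u (g_1+g_2,\,-g_1+g_2,\,1+g_1g_2)+\tfrac{(g_1)_u}{|n|_E}(1,\,-1,\,g_2).
\end{equation*}
The two vectors on the right are linearly independent (parallelism would force $g_2=0$ from the first two components and then $1=0$ from the third), and a short computation of $(|n|_E^2)_u$ shows that the coefficient $\left(1/|n|_E\right)_u$ is itself a scalar multiple of $(g_1)_u$. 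Hence $n_u(p)=\bm{0}$ if and only if $(g_1)_u(p)=0$, which proves (i).

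For (ii), I will write the signed area density $\lambda=\det(f_u,f_v,n)$ in closed form. A direct expansion of the Euclidean cross product $f_u\times f_v$ and comparison with \eqref{eq:unitnormal} give
\begin{equation*}
f_u\times f_v=-\tfrac{\hat{\omega}_1\hat{\omega}_2(1-g_1g_2)|n|_E}{2}\,n,\qquad \lambda=-\tfrac{|n|_E}{2}\,\hat{\omega}_1\hat{\omega}_2(1-g_1g_2),
\end{equation*}
which is consistent with the description of $\Sigma(f)$ in \eqref{eq:singular_set}. Setting $h:=\hat{\omega}_1\hat{\omega}_2(1-g_1g_2)$, we have $h(p)=0$ and $|n|_E(p)>0$, so $d\lambda(p)\neq 0$ is equivalent to $dh(p)\neq \bm{0}$. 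Because $g_1,\hat{\omega}_1$ depend only on $u$ while $g_2,\hat{\omega}_2$ depend only on $v$, every summand of $h_v(p)$ carries a factor of $\hat{\omega}_1(p)=0$, and the single surviving contribution is
\begin{equation*}
h_u(p)=(\hat{\omega}_1)_u(p)\,\hat{\omega}_2(p)\,(1-g_1g_2)(p).
\end{equation*}
Using $\hat{\omega}_2(p)\neq 0$, this vanishes precisely when $(\hat{\omega}_1)_u(p)=0$ or $g_1g_2(p)=1$, yielding (ii).

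The main bookkeeping obstacle is verifying the clean factorization of the cross product that produces the above formula for $\lambda$; once this is in hand, everything else reduces to one-variable differentiation and the linear-independence observation used in (i).
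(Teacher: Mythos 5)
Your proof is correct and follows essentially the same route as the paper: part (i) reduces the front condition to $n_u(p)\neq\bm{0}$ and differentiates \eqref{eq:unitnormal}, and part (ii) uses the factorization $\lambda=-\tfrac{|n|_E}{2}\hat{\omega}_1\hat{\omega}_2(1-g_1g_2)$ (the paper's \eqref{eq:lambda}) and evaluates $d\lambda_p$. The only differences are presentational — you derive the $\lambda$-factorization via the cross product and organize $n_u$ by a product-rule grouping plus a linear-independence check, where the paper simply writes out $n_u$ in fully expanded form.
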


\begin{proof}
By taking the derivatives of the equation \eqref{eq:unitnormal}, we have 
\begin{align}
n_u=(n_u^1,n_u^2,n_u^3)=\frac{(g_1)_u}{|n|_E^3}\left((1+g_2^2)(1-g_1g_2),-1-3g_1g_2-3g_2^2-g_1g_2^3,2(g_2^3-g_1)\right), \label{eq:n_u} \\
n_v=(n_v^1,n_v^2,n_v^3)=\frac{(g_2)_v}{|n|_E^3}\left((1+g_1^2)(1-g_1g_2),1+3g_1g_2+3g_1^2+g_1^3g_2,2(g_1^3-g_2)\right) \label{eq:n_v}.
\end{align}
Since $f_u(p)=0$ for $p\in \Sigma^\omega(f)$ with $\hat{\omega}_1(p)=0$, $f$ is a front at $p$ if and only if $n_u(p)\neq0$. By \eqref{eq:n_u}, $n_u(p)\neq0$ is equivalent to the condition $(g_1)_u(p)=0$. Then, we have proved (i) for the case $\hat{\omega}_1(p)=0$. Finally, the signed area density function $\lambda$ can be written as
\begin{equation}\label{eq:lambda}
\lambda=\Lambda(1-g_1g_2)\hat{\omega}_1\hat{\omega}_2,\quad \Lambda=-\sqrt{(1-g_1g_2)^2+2(g_1+g_2)^2}/2.
\end{equation}
Since $\hat{\omega}_1(p)=0$ we have $d\lambda_p=\Lambda(p)(1-g_1g_2(p))(\hat{\omega}_1)_u(p)\hat{\omega}_2(p)$, and hence $p$ is non-degenerate if and only if $(\hat{\omega}_1)_u\neq 0$ and $g_1g_2\neq1$ at $p$. The proof of (i) and (ii) for the case $\hat{\omega}_2(p)=0$ is same.
\end{proof}

In the following sections, when considering an $\omega$-singular point $p$ satisfying $\text{rank}(df_p)=1$, we will state the assertions only for the case $\hat{\omega}_2(p)=0$.

%%%%%%%%%%%%%%%%
\subsection{$(2,2k+1)$-cuspidal edges}
In this section, we consider $(2,2k+1)$-cuspidal edge of the form:
\begin{equation}\label{eq:2k+1cusp}
(u,v)\longmapsto (u,v^2,v^{2k+1}), \quad k=1,2,\cdots
\end{equation}
In general, if we consider a cuspidal edge of the form $f(u,v)=(u,v^m,v^n)$ $(2\leq m\leq n)$ and assume that it is non-degenerate, then we have $m=2$. Hence, \eqref{eq:2k+1cusp} means that we consider arbitrary non-degenerate cuspidal edges.

Usually the (2,3)-cuspidal edge is simply called the {\it cuspidal edge}. First, we give a characterization of cuspidal edges as $\omega$-singular points.

\begin{lemma}\label{lemma_first_kind}
The cuspidal edge is only non-degenerate rank one $\omega$-singular point on which the considered generalized timelike minimal surface is a front.
\end{lemma}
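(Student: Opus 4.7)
The plan is to apply the standard criterion (Kokubu--Rossman--Saji--Umehara--Yamada) that a front at a non-degenerate singular point $p$ is $\mathcal{A}$-equivalent to the cuspidal edge if and only if $\eta\lambda(p)\neq 0$, where $\lambda$ is the signed area density function and $\eta$ is an extension of the null direction. All the data needed to carry this out has already been assembled in Lemma~\ref{lemma_front_condition} and in the factorization \eqref{eq:lambda} of $\lambda$.

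First I would reduce to the case $\hat{\omega}_2(p)=0$, justified by the symmetry remark after Lemma~\ref{lemma_front_condition}. Because $p$ has rank one we then automatically have $\hat{\omega}_1(p)\neq 0$, and combining the front and non-degeneracy hypotheses with Lemma~\ref{lemma_front_condition} yields the four inequalities $\hat{\omega}_1(p)\neq 0$, $(\hat{\omega}_2)_v(p)\neq 0$, $g_1g_2(p)\neq 1$, and $(g_2)_v(p)\neq 0$. These are the only ingredients I will use.

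Next I would identify the singular curve and the null direction. Since $\hat{\omega}_2$ depends on $v$ alone and $(\hat{\omega}_2)_v(p)\neq 0$, its zero locus is locally the horizontal line $\{v=v_0\}$; because $g_1g_2(p)\neq 1$ and $\hat{\omega}_1(p)\neq 0$, the remaining factors in \eqref{eq:lambda} do not vanish near $p$, so this line is exactly $\Sigma(f)$ near $p$ and $\gamma'=\partial_u$ is a singular tangent. The relation $f_v(p)=\bm{0}$, forced by $\hat{\omega}_2(p)=0$, lets me take $\eta=\partial_v$ as a null vector field extending the null direction at $p$.

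To conclude I would differentiate \eqref{eq:lambda} in $v$ and evaluate at $p$: every term in which $\hat{\omega}_2$ itself (rather than its derivative) appears drops out, leaving
\[
\eta\lambda(p)=\Lambda(p)\bigl(1-g_1g_2(p)\bigr)\hat{\omega}_1(p)(\hat{\omega}_2)_v(p),
\]
and each of the four factors is nonzero, since $\Lambda$ never vanishes because $|n|_E\neq 0$. Thus $\eta\lambda(p)\neq 0$ and the criterion delivers the cuspidal edge. No substantial obstacle arises; the only care needed is in confirming that the singular set near $p$ really coincides with the smooth curve $\{\hat{\omega}_2=0\}$, which is precisely where both $g_1g_2(p)\neq 1$ and $\hat{\omega}_1(p)\neq 0$ enter the argument.
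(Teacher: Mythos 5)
Your proof is correct and follows essentially the same route as the paper: both identify the singular curve as $\{v=v_0\}$ with null direction $\eta=\partial_v$ and invoke the cuspidal edge criterion of Kokubu--Rossman--Saji--Umehara--Yamada. The only cosmetic difference is that you verify the equivalent condition $\eta\lambda(p)\neq 0$ by differentiating \eqref{eq:lambda}, whereas the paper checks the transversality form $\det(\gamma',\eta)=\det(\partial_u,\partial_v)\neq 0$ directly, which is immediate in these coordinates.
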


\begin{proof}
For an $\omega$-singular point $p=(0,0)$ with $\hat{\omega}_2(p)=0$ and $\mathrm{rank}(df_p)=1$, we can take the singular curve $\gamma(u)=(u,0)$ and the null direction given by $\eta=\frac{\partial}{\partial{v}}$, which satisfies $\det{(\gamma',\eta)}\neq0$. By the useful criterion proved in \cite[Proposition 1.3]{KRSUY}, we prove the desired result.
\end{proof}

By Lemma \ref{lemma_first_kind}, any rank one non-degenerate $\omega$-singular point on which the considered surface is a front becomes a cuspidal edge automatically. On the other hand, it is natural to ask what is a typical rank one non-degenerate $\omega$-singular point on which the considered surface is not a front. As an answer to this question, we prove the following theorem.

\begin{theorem}\label{criterion_(2,5)CE}
Let $p\in \Sigma^\omega(f)$ be a $\omega$-singular point satisfying $\text{rank}(df_p)=1$ and $\hat{\omega}_2(p)=0$ of a generalized timelike minimal surface $f=f(u,v)$ with W-data $(g_1, g_2,\hat{\omega}_1du, \hat{\omega}_2dv)$. If $f$ is not a front at $p$, then $f$ is $\mathcal{A}$-equivalent to the $(2,5)$-cuspidal edge if and only if $\left(\frac{(g_2)_{vv}}{(\hat{\omega}_2)_v}\right)_v(p)\neq0. 
$
\end{theorem}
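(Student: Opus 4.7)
The plan is to bring $f$ into the standard $(2,5)$-cuspidal edge form $(u,v^2,v^5)$ by an explicit chain of $\mathcal{A}$-equivalence moves while tracking how the Weierstrass data enter the resulting coefficients. I would first normalize local coordinates at $p=(0,0)$ so that the singular curve is $\{v=0\}$, $\gamma'=\partial_u$, and the null direction is $\eta=\partial_v$; this is possible since non-degeneracy (guaranteed by $(\hat\omega_2)_v(p)\neq 0$ and $g_1g_2(p)\neq 1$ via Lemma~\ref{lemma_front_condition}(ii)) turns $\hat\omega_2(v)=0$ into the graph $v=0$. Using $f_v=\tfrac12\mathbf A(v)\hat\omega_2(v)$ with $\mathbf A(v)=(1+g_2^2,1-g_2^2,-2g_2)$, I would decompose
\[
\mathbf A(v)=\mathbf A(0)+2h(v)\mathbf V_0+h(v)^2\mathbf W_0,\qquad h(v)=g_2(v)-g_2(0),
\]
where $\mathbf V_0=(g_2(0),-g_2(0),-1)$ and $\mathbf W_0=(1,-1,0)$ are constant vectors. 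The assumption that $f$ is not a front at $p$ gives $(g_2)_v(p)=0$ by Lemma~\ref{lemma_front_condition}(i), so $h(v)=\tfrac12(g_2)_{vv}(p)v^2+\tfrac16(g_2)_{vvv}(p)v^3+O(v^4)$, and term-by-term integration with $\alpha=(\hat\omega_2)_v(p)\neq 0$ and $\delta=(\hat\omega_2)_{vv}(p)$ produces
\[
f(u,v)-f(u,0)=P(v)\mathbf A(0)+Q(v)\mathbf V_0+O(v^6),
\]
with $P(v)=\tfrac{\alpha}{4}v^2+\tfrac{\delta}{12}v^3+O(v^4)$ and $Q(v)=\tfrac{\alpha(g_2)_{vv}(p)}{8}v^4+\tfrac{3(g_2)_{vv}(p)\delta+2(g_2)_{vvv}(p)\alpha}{60}v^5+O(v^6)$.

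Next, I would verify, via a direct computation using $g_1g_2(p)\neq 1$, that $f_u(p)$ and $\mathbf A(p)$ are linearly independent and span the tangent plane at $p$, whereas $\mathbf V_0$ is transverse to that plane because $\langle\mathbf V_0,n(p)\rangle_E$ is a nonzero multiple of $g_1g_2(p)-1$. After choosing target coordinates $(X,Y,Z)$ adapted to this frame and straightening the regular curve $u\mapsto f(u,0)$ onto the $X$-axis (absorbing the $\mathbf V_0$ and $\mathbf A(0)$ contributions to the $X$-component by a source change $u\mapsto u+\text{lin.\ comb.\ of }P,Q$), the map becomes $f=(u,\,C_2 P(v)+C_2'Q(v),\,C_3 Q(v)+O(v^6))$ with $C_2,C_3\neq 0$. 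Then a source diffeomorphism $v\mapsto\tilde v$, chosen so that the second coordinate becomes exactly proportional to $\tilde v^2$, forces $v=\tilde v-\tfrac{\delta}{6\alpha}\tilde v^2+O(\tilde v^3)$, and substitution transforms the $\tilde v^5$ coefficient of the third coordinate into
\[
\tfrac{C_3}{30}\bigl((g_2)_{vvv}(p)\alpha-(g_2)_{vv}(p)\delta\bigr)=\tfrac{C_3\alpha^2}{30}\Bigl(\tfrac{(g_2)_{vv}}{(\hat\omega_2)_v}\Bigr)_v(p).
\]
Every even-order term $\tilde v^{2k}$ in the third coordinate (including the original $\tilde v^4$ term from $Q$) is then eliminated by successive target diffeomorphisms $Z\mapsto Z-h(X,Y)$, since $\tilde v^{2k}$ is a smooth function of $Y$ after the source change. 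The map thus reduces to $(u,\tilde v^2,c_5\tilde v^5+c_7\tilde v^7+\cdots)$ with $c_5$ proportional to $\bigl(\tfrac{(g_2)_{vv}}{(\hat\omega_2)_v}\bigr)_v(p)$.

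To close the argument, a standard finite-determinacy result, or a direct iterative rescaling of $\tilde v$ combined with further target absorptions, would show that any germ of the form $(u,v^2,c_5v^5+c_7v^7+\cdots)$ with $c_5\neq 0$ is $\mathcal{A}$-equivalent to $f^{(2,5)}_{CE}$, which yields the stated criterion. The delicate step, and the one I expect to be the main obstacle, is the bookkeeping inside the source reparametrization: that substitution couples the $v^4$ and $v^5$ terms of $Q$, and precisely this coupling converts the naive leading invariant $(g_2)_{vvv}(p)$ into the intrinsic quantity $\bigl(\tfrac{(g_2)_{vv}}{(\hat\omega_2)_v}\bigr)_v(p)$. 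Overlooking the interaction would wrongly exclude the $(2,5)$-cuspidal edges occurring at points where $(g_2)_{vv}(p)\neq 0$ but the full expression remains nonzero.
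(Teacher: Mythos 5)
Your route is genuinely different from the paper's: the paper proves this theorem by verifying the Honda--Koiso--Saji determinant criterion (Fact~\ref{HKS_criterion}), constructing the adapted null field $\tilde\eta=(av+bv^2)\partial_u+\partial_v$ and the constant $C=(\hat\omega_2)_{vv}/(\hat\omega_2)_v(p)$ in Lemma~\ref{lemma_aboutC} and then computing
$\det(\xi f,\tilde\eta^2f,3\tilde\eta^5f-10C\tilde\eta^4f)=6\hat\omega_1(\hat\omega_2)_v^3(1-g_1g_2)^2\bigl(\tfrac{(g_2)_{vv}}{(\hat\omega_2)_v}\bigr)_v$ at $p$,
whereas you attempt a direct normal-form reduction exploiting the translation-surface structure $f=\varphi(u)/2+\psi(v)/2$. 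Your computations up to the identification of the $\tilde v^5$-coefficient are correct and consistent with the paper: the decomposition of $\mathbf A(v)$, the expansions of $P$ and $Q$, the transversality $\langle\mathbf V_0,n(p)\rangle_E=(g_1g_2(p)-1)/|n|_E\neq0$, and the $v^4$--$v^5$ coupling under $v=\tilde v-\tfrac{\delta}{6\alpha}\tilde v^2+\cdots$ that converts $(g_2)_{vvv}(p)$ into $\alpha^2\bigl(\tfrac{(g_2)_{vv}}{(\hat\omega_2)_v}\bigr)_v(p)$ all check out.

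There are, however, two genuine gaps. First, the reduction is not actually completed: after straightening $u\mapsto f(u,0)$ and reparametrizing $v$, the second coordinate is not exactly $\tfrac{\alpha}{4}\tilde v^2$ but $\tfrac{\alpha}{4}\tilde v^2+O(\tilde v^6)$ with $u$-dependent and odd-order corrections (coming from the $\mathbf W_0$-term and from composing the target diffeomorphism with the perturbed first coordinate), so the step ``every even-order term $\tilde v^{2k}$ is a smooth function of $Y$ and can be absorbed'' is not literally available, and the germ you reach is not of the clean form $(u,\tilde v^2,c_5\tilde v^5+c_7\tilde v^7+\cdots)$. Closing this requires a real finite-determinacy argument for the $(2,5)$-cuspidal edge, which you only gesture at. Second, and more seriously, your argument only addresses sufficiency: nothing in the write-up shows that when $c_5=0$ the germ is \emph{not} $\mathcal{A}$-equivalent to $f^{(2,5)}_{CE}$, i.e., that the vanishing of the (frame- and parametrization-dependent) coefficient $c_5$ is an $\mathcal{A}$-invariant condition. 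The theorem is an equivalence, and the ``only if'' direction is exactly what an if-and-only-if criterion such as Fact~\ref{HKS_criterion} supplies and what your construction, as written, does not. Both gaps disappear if you simply feed your (correct) derivative computations into that criterion, which is what the paper does.
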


To prove Theorem \ref{criterion_(2,5)CE}, we use the following criterion for $(2,5)$-cuspidal edges.
\begin{fact}[\cite{HKS}]\label{HKS_criterion}
Let $U$ be a domain of $\mathbb{R}^2$,
$f\colon U \rightarrow \mathbb{R}^3$ a frontal,
and $p \in U$ a non-degenerate singular point of the first kind.
Moreover, let
$\gamma(t)$ $(|t|<\varepsilon)$ 
be a singular curve passing through $p=\gamma(0)$
and $\eta(t)$ a null vector field along $\gamma$.
Take smooth vector fields $\xi=\xi(u,v)$ and $\eta=\eta(u,v)$ on $U$
which are extensions of $\gamma'(t)$ and $\eta(t)$, respectively.
Then, $p=\gamma(0)$ is a $(2,5)$-cuspidal edge 
if and only if 
\begin{gather}
  \det(\xi f,\, \eta^2f,\, \eta^3f)=0
  \ \text{on $\gamma$}, \quad and  
  \label{eq:condition-3}\\
  \det(\xi f,\, \tilde\eta^2 f,\, 3\tilde\eta^5 f - 10 C \tilde\eta^4 f)(p)\neq0
  \label{eq:condition-4}
\end{gather}
hold, where $\eta^k f$ implies $k$-times derivative $\eta \cdots \eta f$, 
$\tilde{\eta}$ is a special null vector field satisfying
\begin{equation}\label{eq:special-null}
  \langle\xi f, \tilde\eta^2 f\rangle_E(p)=
  \langle\xi f, \tilde\eta^3 f\rangle_E(p)=0
\end{equation}
{\rm (}$\langle, \rangle_E$ means the Euclidean inner product\/{\rm )},
%{\rm (}the dot ``\,$\cdot$\,'' means the Euclidean inner product\/{\rm )},
and $C$ is a constant such that
\begin{equation}\label{eq:constant-C}
  \tilde\eta^3 f(p) = C \tilde\eta^2 f(p).
\end{equation}
\end{fact}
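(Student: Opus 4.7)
The plan is to establish both directions of this characterization by reducing $f$ to the normal form $(u,v^2,v^5)$ through source-and-target coordinate changes, with the stated invariants emerging as precisely the obstructions at each jet level.

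First I would fix adapted coordinates $(u,v)$ near $p$ in which the singular set is $\{v=0\}$, the vector fields are $\xi=\partial_u$ and $\eta=\partial_v$, and $f_v(u,0)=\bm{0}$; this is possible because $p$ is of the first kind. Non-degeneracy ensures $f_u(p)$ and $f_{vv}(p)$ are linearly independent, so after a target linear change we may take $f_u(p)=e_1$ and $f_{vv}(p)=2e_2$. Expanding in powers of $v$ produces Taylor coefficients $f_k(u)=\partial_v^k f(u,0)$ encoding all the geometric data. The necessity direction is then a direct check on the model: $\eta^3 f\equiv\eta^4 f\equiv\bm{0}$ makes the first determinant vanish along $\gamma$, the canonical choice $\tilde{\eta}=\eta$ satisfies \eqref{eq:special-null} with $C=0$, and $\det(\xi f,\tilde\eta^2 f,3\tilde\eta^5 f)(0)=720\neq 0$.

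For sufficiency I would proceed in three stages. \textbf{Stage 1} (the special null field): setting $\tilde\eta=\eta-\lambda(u,v)\xi$ and adjusting $\lambda(p)$ and a first-order derivative of $\lambda$, the two Euclidean-orthogonality conditions \eqref{eq:special-null} at $p$ can be solved linearly in $\lambda$ since $\xi f(p)\neq\bm{0}$. Hypothesis \eqref{eq:condition-3} then forces $\tilde\eta^2 f(p)$ and $\tilde\eta^3 f(p)$ to be linearly dependent inside the two-plane $\xi f(p)^{\perp}$, producing the scalar $C$ of \eqref{eq:constant-C}. \textbf{Stage 2} (absorbing the third- and fourth-order jets): since \eqref{eq:condition-3} forces $\eta^3 f$ into $\mathrm{span}(\xi f,\eta^2 f)$ along $\gamma$, a source reparameterization $v\mapsto v+\alpha(u)v^2/2+\beta(u)v^3/6$ combined with a target translation absorbs the $v^3$ and $v^4$ jets of $f$ modulo $\xi f$ and $\tilde\eta^2 f$; the coefficients $\alpha,\beta$ are solved from a triangular linear system in the $(\xi f,\tilde\eta^2 f)$-plane. \textbf{Stage 3} (the order-five invariant): applying the chain rule through the Stage-2 reparameterization and the Stage-1 projection shows that the $v^5$ Taylor coefficient transforms to $3\tilde\eta^5 f-10C\tilde\eta^4 f$ modulo terms parallel to $\xi f$ and $\tilde\eta^2 f$, where the factors $3$ and $-10$ come from the binomial expansions of $(v+\alpha v^2/2)^5$ and $(v+\alpha v^2/2)^4$ together with the $C$-coupling. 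Hypothesis \eqref{eq:condition-4} is exactly the statement that this residual provides the third direction needed to complete the reduction to the model $(u,v^2,v^5)$.

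The hard part is verifying the precise combinatorial coefficients in Stage 3. The quantity $3\tilde\eta^5 f-10C\tilde\eta^4 f$ must be shown to be an $\mathcal{A}$-invariant of the full $5$-jet modulo the data eliminated in Stages 1 and 2; this amounts to tracking how the fifth-order coefficient in $v$ couples to the $C$-proportionality between $\tilde\eta^3 f$ and $\tilde\eta^2 f$ under a reparameterization $v\mapsto v+\alpha v^2/2+\beta v^3/6$. Once the invariance and the exact coefficients are confirmed, everything else is routine frontal-singularity bookkeeping, and non-vanishing of the determinant in \eqref{eq:condition-4} completes the reduction to $(u,v^2,v^5)$.
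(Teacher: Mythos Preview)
This statement is labeled as a \emph{Fact} in the paper and is cited from \cite{HKS}; the paper does not supply its own proof of it. There is therefore nothing in this paper to compare your argument against. Your outline is a plausible sketch of how one might prove such a criterion by jet-level normalization, but whether it matches the original argument in \cite{HKS} cannot be judged from this paper alone, and you should consult that reference directly.
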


%In this situation,
Now we can take the vector field $\tilde{\eta}$ and the constant $C$ as follows.
\begin{lemma}\label{lemma_aboutC}
Let $f$ and $p$ be the same as in Theorem \ref{criterion_(2,5)CE}, and assume that $p=\bm{0}$. Then we can take the vector filed $\tilde{\eta}$ and the constant $C$ in Fact \ref{HKS_criterion} as follows.
\begin{equation*}
\tilde{\eta}(u,v)=(av+bv^2) \frac{\partial}{\partial u}+\frac{\partial}{\partial v}\quad \text{ and }\quad C=\frac{(\hat{\omega}_2)_{vv}}{(\hat{\omega}_2)_v}(p),
\end{equation*}
where the constants $a$ and $b$ are
\begin{equation}\label{eq:a_b}
a=\frac{(\hat{\omega}_2)_v}{\hat{\omega}_1}\left(\frac{g_1+g_2}{1+g_1^2}\right)^2(p) \quad\text{ and }\quad b=\frac{(\hat{\omega}_2)_{vv}}{2\hat{\omega}_1}\left(\frac{g_1+g_2}{1+g_1^2}\right)^2(p).
\end{equation}
\end{lemma}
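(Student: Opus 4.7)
The plan is to verify conditions \eqref{eq:special-null}--\eqref{eq:constant-C} of Fact \ref{HKS_criterion} by direct computation from the Weierstrass representation. Normalize so $p=\mathbf{0}$. By Lemma \ref{lemma_front_condition} together with the hypothesis that $f$ is not a front, one has $\hat{\omega}_2(0)=0$, $(g_2)_v(0)=0$, $(\hat{\omega}_2)_v(0)\neq 0$, $g_1g_2(0)\neq 1$, and $\hat{\omega}_1(0)\neq 0$. Near $p$ the singular set is $\{v=0\}$, so take $\gamma(u)=(u,0)$, $\xi=\partial_u$, with null direction $\partial_v$; since the $\partial_u$-component $av+bv^2$ of $\tilde{\eta}$ vanishes along $\gamma$, $\tilde{\eta}|_\gamma=\partial_v$ is an admissible null vector field.

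The crucial simplification is the wave-equation decomposition $f=(\varphi(u)+\psi(v))/2$, which forces $f_{uv}\equiv 0$ (and hence all mixed partials vanish). Writing $\vec{V}(v):=\tfrac{1}{2}(1+g_2^2,\,1-g_2^2,\,-2g_2)$ so that $f_v=\hat{\omega}_2\vec{V}$, the two vanishings $\hat{\omega}_2(0)=0$ and $(g_2)_v(0)=0$ imply
\[
f_{vv}(0)=(\hat{\omega}_2)_v(0)\,\vec{V}(0), \qquad f_{vvv}(0)=(\hat{\omega}_2)_{vv}(0)\,\vec{V}(0);
\]
both are scalar multiples of $\vec{V}(0)$, which is the key observation making $C$ well-defined. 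Expanding $\tilde{\eta}^2 f$ and $\tilde{\eta}^3 f$ by Leibniz and evaluating at $v=0$, every term involving $f_{uv}$ (or its descendants) or the factor $av+bv^2$ disappears, leaving
\[
\tilde{\eta}^2 f(0)=a\,f_u(0)+f_{vv}(0), \qquad \tilde{\eta}^3 f(0)=2b\,f_u(0)+f_{vvv}(0),
\]
both lying in the plane $\mathrm{span}\{f_u(0),\vec{V}(0)\}$, and these two vectors are linearly independent (a short check using $\hat{\omega}_1(0)\neq 0$ and $g_1g_2(0)\neq 1$).

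To impose \eqref{eq:special-null}, I compute
\[
\langle f_u,f_u\rangle_E(0)=\tfrac{1}{2}\hat{\omega}_1(0)^2(1+g_1(0)^2)^2, \qquad \langle f_u,\vec{V}\rangle_E(0)=-\tfrac{1}{2}\hat{\omega}_1(0)(g_1(0)+g_2(0))^2.
\]
The conditions $\langle\xi f,\tilde{\eta}^2 f\rangle_E(0)=0$ and $\langle\xi f,\tilde{\eta}^3 f\rangle_E(0)=0$ become decoupled linear equations in $a$ and $b$, solved uniquely by the expressions in \eqref{eq:a_b}. For \eqref{eq:constant-C}, the relation $2b/a=(\hat{\omega}_2)_{vv}(0)/(\hat{\omega}_2)_v(0)$ built into these choices makes the coordinate pairs $(a,(\hat{\omega}_2)_v(0))$ and $(2b,(\hat{\omega}_2)_{vv}(0))$ of $\tilde{\eta}^2 f(0)$ and $\tilde{\eta}^3 f(0)$ in the basis $\{f_u(0),\vec{V}(0)\}$ proportional with common ratio $C=(\hat{\omega}_2)_{vv}(0)/(\hat{\omega}_2)_v(0)$, as claimed.

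The main obstacle is purely bookkeeping: a priori $\tilde{\eta}^3 f$ expands into many terms, but the three simultaneous vanishings $v=0$, $f_{uv}\equiv 0$, and $(g_2)_v(0)=0$ knock out all but the two surviving terms above. Once this reduction is made, the determination of $a$, $b$, and $C$ is elementary $2\times 2$ linear algebra in the plane $\mathrm{span}\{f_u(0),\vec{V}(0)\}$.
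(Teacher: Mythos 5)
Your proposal is correct and follows essentially the same route as the paper: both deduce $\hat{\omega}_2(p)=(g_2)_v(p)=0$ from Lemma \ref{lemma_front_condition} and the non-front hypothesis, reduce $\tilde{\eta}^2f(p)$ and $\tilde{\eta}^3f(p)$ to $af_u+f_{vv}$ and $2bf_u+f_{vvv}$ using $f_{uv}\equiv 0$, solve the orthogonality conditions \eqref{eq:special-null} for $a,b$ via the same inner products, and obtain $C$ from the proportionality $f_{vvv}(p)=\bigl((\hat{\omega}_2)_{vv}/(\hat{\omega}_2)_v\bigr)f_{vv}(p)$ together with $2b=\bigl((\hat{\omega}_2)_{vv}/(\hat{\omega}_2)_v\bigr)a$. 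The only cosmetic difference is that you re-derive the general formulas $a=-\langle f_u,f_{vv}\rangle_E/\langle f_u,f_u\rangle_E$ and $b=-\langle f_u,f_{vvv}\rangle_E/(2\langle f_u,f_u\rangle_E)$ directly, whereas the paper cites them from \cite[Lemma 4.2]{HKS}.
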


\begin{proof}
As proved in \cite[Lemma 4.2]{HKS}, we can construct $\tilde{\eta}$ satisfying the condition (\ref{eq:special-null}) as 
\[
\tilde{\eta}(u,v)=(av+bv^2) \frac{\partial}{\partial u}+\frac{\partial}{\partial v},\text{ where }a=-\frac{\langle f_u,f_{vv}\rangle_E}{\langle f_u,f_u\rangle_E}(p)\text{ and } b=-\frac{\langle f_u,f_{vvv}\rangle_E}{2\langle f_u,f_u\rangle_E}(p).
\]
%%%%%
By the assumption and Lemma \ref{lemma_front_condition}, the relations $\hat{\omega}_2(p)=g_2'(p)=0$ hold and hence we can calculate 
\begin{align*}
f_u=\frac{\hat{\omega}_1}{2}&(-1-g_1^2, 1-g_1^2, 2g_1),\\
f_{vv}=\frac{(\hat{\omega}_2)_v}{2}(1+g_2^2,1-g_2^2, -2g_2),&\quad f_{vvv}=\frac{(\hat{\omega}_2)_{vv}}{2}(1+g_2^2,1-g_2^2, -2g_2)\quad \text{at $p$.}
\end{align*}
By using them, 
\begin{align*}
\langle f_u,f_u\rangle_E&=\frac{\hat{\omega}_1}{2}(1+g_1^2)^2,\\
\langle f_u,f_{vv}\rangle_E=-\frac{\hat{\omega}_1(\hat{\omega}_2)_v}{2}(g_1+g_2)^2,&\quad
\langle f_u,f_{vvv}\rangle_E=-\frac{\hat{\omega}_1(\hat{\omega}_2)_{vv}}{2}(g_1+g_2)^2
\end{align*}
hold at $p$. Thus, the constants $a$ and $b$ are written as (\ref{eq:a_b}).

For $\tilde{\eta}=(av+bv^2) \frac{\partial}{\partial u}+\frac{\partial}{\partial v}$, $\tilde{\eta}^2{f}(p)$ and $\tilde{\eta}^3{f}(p)$ are written as
\begin{equation}
{\tilde{\eta}^2f}=af_u+f_{vv},\quad \tilde{\eta}^3{f}=2bf_u+f_{vvv}\quad \text{at $p$}.
\end{equation}
Based on the relations 
\begin{equation*}
f_{vvv}(p)=\frac{(\hat{\omega}_2)_{vv}}{(\hat{\omega}_2)_v}f_{vv}(p)\quad \text{and} \quad 2b=\frac{(\hat{\omega}_2)_{vv}}{(\hat{\omega}_2)_v}a,
\end{equation*}
we obtain the desired constant $C$.
\end{proof}

By using Fact \ref{HKS_criterion} and Lemma \ref{lemma_aboutC}, we can prove Theorem \ref{criterion_(2,5)CE} as follows.
\begin{proof}[Proof of Theorem \ref{criterion_(2,5)CE}]
Without loss of generality, we may assume that $p=(0,0)$.
First we check the condition \eqref{eq:condition-3}. Since the singular curve passing through $p$ is $\gamma(u)=(u,0)$, we can take the vector fields $\xi$ and $\eta$ as $\xi=\partial/\partial u$ and $\eta=\partial/\partial v$.
As we saw in the proof of Lemma \ref{lemma_aboutC}, $\eta^2f=f_{vv}$ and $\eta^3f=f_{vvv}$ are parallel on $\gamma$. Therefore we have checked the condition \eqref{eq:condition-3}. 

Next, we see the quantity \eqref{eq:condition-4}.
By using Lemma \ref{lemma_aboutC}, we can calculate that
\begin{align*}
{\tilde{\eta}^4f}&=3a^2f_{uu}+f_{vvvv} \\
&=3a^2\left[\frac{(\hat{\omega}_1)_u}{2}\bm{c_1}-\hat{\omega}_1(g_1)_u\bm{c_3}\right]
+\frac{(\hat{\omega}_2)_{vvv}}{2}\bm{c_2}-3(\hat{\omega}_2)_v(g_2)_{vv}\bm{c_4},\\
\tilde{\eta}^5{f}=&20abf_u+f_{vvvvv}\\
=&20ab\left[\frac{(\hat{\omega}_1)_u}{2}\bm{c_1}-\hat{\omega}_1(g_1)_u\bm{c_3}\right]+\frac{(\hat{\omega}_2)_{vvvv}}{2}\bm{c_2}-\left[6(\hat{\omega}_2)_{vv}(g_2)_{vv}+4(\hat{\omega}_2)_v(g_2)_{vvv}\right]\bm{c_4}
\end{align*}
at $p$, where
\begin{gather}
\bm{c_1}=\left(-1-g_1^2(p),1-g_1^2(p),2g_1(p)\right),\quad \bm{c_2}=\left(1+g_2^2(p),1-g_2^2(p),-2g_2(p)\right), \nonumber \\
\bm{c_3}=\left(g_1(p),g_1(p),-1\right),\quad \bm{c_4}=\left(-g_2(p),g_2(p),1\right).\nonumber
\end{gather}
Nothing that $\xi$ and $\tilde{\eta}^2f$ at $p$ were calculated in the proof of Lemma \ref{lemma_aboutC} as follows
\[
\xi f=\frac{\hat{\omega}_1}{2}\bm{c_1}\quad \text{ and }\quad
\tilde{\eta}^2f=a\frac{(\hat{\omega}_1)}{2}\bm{c_1}+\frac{(\hat{\omega}_2)_v}{2}\bm{c_2},
\]
we obtain the following equations at $p$
\begin{align*}%\label{eq:eta4}
&\det{\left(\xi{f},\tilde{\eta}^2{f},-10C\tilde{\eta}^4{f} \right)}=-\frac{5C\hat{\omega}_1(\hat{\omega}_2)_v}{2}\det{\left(\bm{c_1},\bm{c_2},\tilde{\eta}^4{f} \right)}\nonumber \\
&=\frac{5\hat{\omega}_1(\hat{\omega}_2)_{vv}}{2}\left[3a^2 \hat{\omega}_1(g_1)_u \det{\left(\bm{c_1},\bm{c_2},\bm{c_3} \right)}+3(\hat{\omega}_2)_v(g_2)_{vv}\det{\left(\bm{c_1},\bm{c_2},\bm{c_4} \right)}\right]\nonumber \\
&=15a^2(\hat{\omega}_1)^2 (\hat{\omega}_2)_{vv} (g_1)_u (1-g_1g_2)^2 - 15\hat{\omega}_1(\hat{\omega}_2)_v (\hat{\omega}_2)_{vv} (g_2)_{vv} (1-g_1g_2)^2,
\end{align*}
and 
\begin{align*}%\label{eq:eta5}
\det{\left(\xi{f},\tilde{\eta}^2{f},3\tilde{\eta}^5{f} \right)}&=\frac{3\hat{\omega}_1(\hat{\omega}_2)_v}{4}\det{\left(\bm{c_1},\bm{c_2},\tilde{\eta}^5{f} \right)}&\nonumber \\
&=-\frac{3\hat{\omega}_1(\hat{\omega}_2)_{v}}{4}[20ab \hat{\omega}_1(g_1)_u \det{\left(\bm{c_1},\bm{c_2},\bm{c_3} \right)}&\nonumber\\
&\hspace{2.6cm} + \left( 6(\hat{\omega}_2)_{vv}(g_2)_{vv}+4(\hat{\omega}_2)_v(g_2)_{vvv}\right)\det{\left(\bm{c_1},\bm{c_2},\bm{c_4} \right)}]&\nonumber \\
&=-15a^2(\hat{\omega}_1)^2 (\hat{\omega}_2)_{vv} (g_1)_u (1-g_1g_2)^2\nonumber\\
 &\quad + \hat{\omega}_1(\hat{\omega}_2)_v \left( 9(\hat{\omega}_2)_{vv}(g_2)_{vv}+6(\hat{\omega}_2)_v(g_2)_{vvv}\right)(1-g_1g_2)^2,
\end{align*}
where we used the relations $2ab=a^2(\hat{\omega}_2)_{vv}/(\hat{\omega}_2)_{v}$, see Lemma \ref{lemma_aboutC}, and $\det{\left(\bm{c_1},\bm{c_2},\bm{c_3} \right)}=-\det{\left(\bm{c_1},\bm{c_2},\bm{c_4} \right)}=2(1-g_1g_2)^2$ in the last equality. Therefore, we have

\begin{align*}
 \det(\xi f,\, \tilde\eta^2f,\, 3\tilde\eta^5 f - 10 C \tilde\eta^4 f)=&6\hat{\omega}_1(\hat{\omega}_2)_v (1-g_1g_2)^2\left( (\hat{\omega}_2)_{vv}(g_2)_{vvv}-(\hat{\omega}_2)_{vv}(g_2)_{vv}\right)\\
 =&6\hat{\omega}_1(\hat{\omega}_2)_v^3 (1-g_1g_2)^2\left( \frac{(g_2)_{vv}}{(\hat{\omega}_2)_v}\right)_v\text{ at $p$.}
\end{align*}
Noting that $\hat{\omega}_1(\hat{\omega}_2)_v^3 (1-g_1g_2)^2\neq 0$ at $p$ due to non-degeneracy of $p$ and Lemma \ref{lemma_front_condition}, we have the desired result.
\end{proof}

As a corollary of Theorem \ref{criterion_(2,5)CE}, we obtain the following invariance property of (2,5)-cuspidal edges under the isometric and anti-deformtaions introduced in Definition \ref{def:conjugate}.
\begin{corollary}\label{cor:duality_(2,5)_cusp}
Let $p$ be an $\omega$-singular point of a generalized timelike minimal surface $f$. Then, the following statements are equivalent.
\begin{itemize}
\item[(i)] $f$ is $\mathcal{A}$-equivalent to the $(2,5)$-cuspidal edge at $p$,
\item[(ii)] $f_{\theta}$ is $\mathcal{A}$-equivalent to the $(2,5)$-cuspidal edge at $p$ for arbitrary $\theta\in \mathbb{R}$ and 
\item[(iii)] $\hat{f}$ is $\mathcal{A}$-equivalent to the $(2,5)$-cuspidal edge at $p$.
\end{itemize}
\end{corollary}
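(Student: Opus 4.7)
The plan is to reduce the claim to checking that the criterion in Theorem \ref{criterion_(2,5)CE} is invariant under the W-data transformations defining the associated family and the conjugate surface. By Definition \ref{def:conjugate}, the W-data of $f_\theta$ is $(g_1, g_2, e^{\theta}\hat{\omega}_1\,du, e^{-\theta}\hat{\omega}_2\,dv)$ and that of $\hat{f}$ is $(g_1, g_2, \hat{\omega}_1\,du, -\hat{\omega}_2\,dv)$. In both cases the functions $g_1,g_2$ are unchanged, while each $\hat{\omega}_i$ is multiplied by a nonzero real constant; this single structural observation drives the whole argument.

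First I would verify that the hypotheses of Theorem \ref{criterion_(2,5)CE} are preserved. Without loss of generality assume $\hat{\omega}_2(p)=0$; the case $\hat{\omega}_1(p)=0$ is symmetric. Since $\hat{\omega}_2$ is only rescaled by a nonzero constant, $p$ remains an $\omega$-singular point with $\hat{\omega}_2(p)=0$, and the rank-one condition $\mathrm{rank}(df_p)=1$, which amounts to $\hat{\omega}_1(p)\neq 0$, is preserved. By Lemma \ref{lemma_front_condition}(i), the condition that $f$ is not a front at $p$ is equivalent to $(g_2)_v(p)=0$, and this is unchanged because $g_2$ itself is unchanged. The non-degeneracy conditions $(\hat{\omega}_2)_v(p)\neq 0$ and $g_1g_2(p)\neq 1$ from Lemma \ref{lemma_front_condition}(ii) are preserved for exactly the same reasons.

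Next, writing $c\in\mathbb{R}\setminus\{0\}$ for the scalar applied to $\hat{\omega}_2$ (that is, $c=e^{-\theta}$ for $f_\theta$ and $c=-1$ for $\hat{f}$), the key quantity appearing in Theorem \ref{criterion_(2,5)CE} transforms as
\[
\left(\frac{(g_2)_{vv}}{(c\hat{\omega}_2)_v}\right)_v(p) = \frac{1}{c}\left(\frac{(g_2)_{vv}}{(\hat{\omega}_2)_v}\right)_v(p),
\]
which is nonzero if and only if the original is. Applying Theorem \ref{criterion_(2,5)CE} to each of $f$, $f_\theta$, and $\hat{f}$ in turn then yields the three-way equivalence of (i), (ii), and (iii). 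The symmetric argument with the roles of $u$ and $v$ exchanged handles the remaining case $\hat{\omega}_1(p)=0$.

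There is essentially no serious obstacle here: morally, the statement is that the criterion in Theorem \ref{criterion_(2,5)CE} depends on $g_2$ only through $g_2$ itself and on $\hat{\omega}_2$ only up to a nonzero multiplicative scalar. The only mild care required is the bookkeeping in the second paragraph, confirming that the three preliminary geometric hypotheses (rank, not-a-front, non-degeneracy) all transform correctly; each follows at once from Lemma \ref{lemma_front_condition} together with the explicit form of the W-data transformation given in Definition \ref{def:conjugate}.
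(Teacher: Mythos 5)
Your proposal is correct and matches the paper's (implicit) argument: the corollary is stated as an immediate consequence of Theorem \ref{criterion_(2,5)CE}, precisely because the associated family and the conjugate only rescale $\hat{\omega}_1,\hat{\omega}_2$ by nonzero constants, which preserves both the hypotheses (via Lemma \ref{lemma_front_condition}) and the nonvanishing of $\bigl(\tfrac{(g_2)_{vv}}{(\hat{\omega}_2)_v}\bigr)_v(p)$. Your bookkeeping of the rank, front, and non-degeneracy conditions is exactly the verification the paper leaves to the reader.
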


This is an interesting phenomenon by the following reasons:
Honda, Koiso and Saji \cite[Remark 4.9]{HKS} pointed out that maxfaces cannot have $(2,5)$-cuspidal edges because if we assume that a singular point $p$ on a maxface satisfies the condition of $(2,5)$-cuspidal edges, we can prove that $p$ becomes a fold singularity. On the other hand, they also proved in \cite[Theorems 1.1 and 1.2]{HKS} that spacelike surfaces with non-zero constant mean curvature (spacelike CMC surfaces, for short) do not admit fold singularities and there is a duality of conlike singularities of Delaunay surfaces and $(2,5)$-cuspidal edges of their conjugate surfaces. However Theorem \ref{criterion_(2,5)CE} and Corollary \ref{cor:duality_(2,5)_cusp} show that for the timelike case, $(2,5)$-cuspidal edges appear frequently in the sense of Theorem \ref{criterion_(2,5)CE} even for zero mean curvature surfaces, and such singular points have a self duality which is different from that of spacelike CMC surfaces.
Moreover, in contrast to the case of maximal surfaces, the following proposition shows that timelike minimal surfaces can have any type of $(2,2k+1)$-cuspidal edges, that is, cuspidal edges can be as steep as desired.

\begin{proposition}\label{ExistenceCusp}
For any positive integer $k$, the timelike minimal surface $f$ with W-data $(g_1, g_2,\hat{\omega}_1du, \hat{\omega}_2dv)$ satisfying
%\begin{center}
$g_2(v)=v^{2k-1}$, $\hat{\omega}_1(0)\neq0$ and $\hat{\omega}_2(v)=v$,
%\end{center}
has $(2,2k+1)$-cuspidal edges on $v=0$.
\end{proposition}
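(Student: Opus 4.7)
The plan is to split into the cases $k=1$ and $k\geq 2$. For $k=1$, the W-data yields $(g_2)_v(0)=1\neq 0$, $(\hat{\omega}_2)_v(0)=1\neq 0$ and $g_1g_2(u,0)=0\neq 1$ along the singular set $\{v=0\}$; hence by Lemma~\ref{lemma_front_condition} every point $(u_0,0)$ is a non-degenerate rank one $\omega$-singular point at which $f$ is a front, and Lemma~\ref{lemma_first_kind} immediately identifies it as a $(2,3)$-cuspidal edge.

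For $k\geq 2$, $f$ is no longer a front along $\{v=0\}$ since $(g_2)_v(0)=0$, so neither Lemma~\ref{lemma_first_kind} nor Theorem~\ref{criterion_(2,5)CE} applies. Instead I would construct an explicit $\mathcal{A}$-equivalence with the normal form $(u,v^2,v^{2k+1})$. From~\eqref{eq:null_decomp} one computes $f(u,v)=\Phi(u)+\Psi(v)$, where $\Phi(u)=\varphi(u)/2$ is a regular null curve and
\[
\Psi(v)=\left(\tfrac{v^2}{4}+\tfrac{v^{4k}}{8k},\ \tfrac{v^2}{4}-\tfrac{v^{4k}}{8k},\ -\tfrac{v^{2k+1}}{2k+1}\right).
\]
A short determinant calculation gives $\det(\Phi'(0),(1,1,0),(0,0,1))=-\hat{\omega}_1(0)\neq 0$, so an appropriately rescaled linear change in the target sends these three vectors to the standard basis and puts $f$ into the form $(u,v^2,v^{2k+1})+R(u)+S(v)$ with $R(u)=O(u^2)$ and $S(v)=O(v^{4k})$.

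From there the reduction proceeds in four $\mathcal{A}$-equivalence steps: (a) a source change $u\mapsto\tilde u$ absorbing the first-coordinate correction so the first component becomes $\tilde u$ exactly; (b) the target diffeomorphism $(x,y,z)\mapsto(x,\,y-P_2(x),\,z-P_3(x))$ killing the remaining pure-$u$ contributions; (c) a source change $v\mapsto\tilde v(\tilde u,v)$ making the second component exactly $\tilde v^2$; and (d) polynomial target changes of the form $z\mapsto z-g(x,y)$, interleaved with further rescalings of $\tilde v$ as needed, absorbing the residual tail in the third component. I expect step (d) to be the main obstacle: after (c) the third component has the form $\tilde v^{2k+1}$ plus a tail of terms $\tilde v^{4k}$, $\tilde v^{6k-1}$ and cross-terms $\tilde u^i\tilde v^j$, and one must verify that each such term can be eliminated. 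The key structural input is that $\Psi(v)$ contains no odd $v$-power strictly between $v$ and $v^{2k+1}$, so no odd power below $2k+1$ is ever produced by the coordinate changes; combined with the finite determinacy of the $(2,2k+1)$-cuspidal edge germ, this lets all tail terms be absorbed and completes the reduction to the normal form.
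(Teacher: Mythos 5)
Your opening moves are sound: the computation of $\Psi(v)=\bigl(\tfrac{v^2}{4}+\tfrac{v^{4k}}{8k},\tfrac{v^2}{4}-\tfrac{v^{4k}}{8k},-\tfrac{v^{2k+1}}{2k+1}\bigr)$ is correct, the linear normalization of the target is fine, and your ``key structural input'' --- that no odd $v$-power strictly between $v$ and $v^{2k+1}$ occurs --- is exactly the feature that makes the statement true. (The separate $k=1$ case via Lemmas \ref{lemma_front_condition} and \ref{lemma_first_kind} is also correct, though unnecessary, since the direct argument is uniform in $k$.) The problem is that the proof stops precisely where it matters. Step (d) is only announced (``I expect\dots one must verify''), and the single concrete tool you invoke to close it --- finite $\mathcal{A}$-determinacy of the germ $(u,v^2,v^{2k+1})$ --- is false. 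This germ is unstable along the entire singular curve $\{v=0\}$ (every nearby singular point is again a higher cuspidal edge, not an immersion or cross-cap), so by the Mather--Gaffney criterion it is not finitely $\mathcal{A}$-determined; this is the same reason the ordinary cuspidal edge requires a bespoke criterion rather than a jet computation. You therefore cannot absorb an arbitrary high-order tail by appealing to determinacy, and the parity observation alone does not by itself show that the surviving terms $\tilde u^i\tilde v^j$ and $\tilde v^{4k}$, $\tilde v^{6k-1}$, \dots can all be removed by target changes.

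The paper closes exactly this gap by avoiding any iterative absorption. After the translation and the linear change $\Phi_1$, one reparametrizes $V=v\sigma(v)$ with $\sigma(v)=\bigl(\tfrac14-\tfrac{v^{4k-2}}{8k}\bigr)^{1/2}$ so that the middle coordinate becomes \emph{exactly} $V^2$; then the even/odd splitting of the Taylor series (your parity remark, used systematically) writes each remaining coordinate as $h(V^2)+V^{2k+1}\tilde h(V^2)$ plus functions of $U=\int_0^u\omega_1$. Hence the whole map factors as $\Phi_2\circ(U,V^2,V^{2k+1})$ for one explicit target map $\Phi_2$, whose Jacobian at the origin is computed to be $b_{2k+1}\neq0$. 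A single target diffeomorphism does all of your steps (b)--(d) at once, with no appeal to determinacy. To repair your write-up, replace step (d) either by this factorization argument or by an explicit verification that the ideal generated by the admissible coordinate changes contains every tail monomial --- the determinacy shortcut is not available.
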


\begin{proof}
After a translation and the local diffeomorphism  
$\Phi_1(X,Y,Z)=(-X+Y,Y,Z)$,
the timelike minimal surface $f$ defined by \eqref{eq:null_decomp} is written by
\begin{equation*}
f_1(u,v)= \Phi_1^{-1}\circ f(u,v)=\frac{1}{2}\int^u_{0}\left( 2,1-(g_1)^2, 2g_1 \right)\omega_1+\left( -\frac{v^{4k}}{4k},\frac{v^{2}}{4}-\frac{v^{4k}}{8k},\frac{v^{2k+1}}{2k+1}\right).
    \end{equation*}
  We set $(\gamma^0,\gamma^1,\gamma^2)=\left( -\frac{v^{4k}}{4k},\frac{v^{2}}{4}-\frac{v^{4k}}{8k},\frac{v^{2k+1}}{2k+1}\right) $ and consider the new parameter $V=v\sigma(v)$, $\sigma(v)=(\frac{1}{4}-\frac{v^{4k-2}}{8k})^{\frac{1}{2}}$ near $v=0$.
 Since 
 \[
 \frac{d{\gamma^0}}{dV}(V)=-v^{4k-1}\tau(v),\quad \tau(v)=\frac{d{v}}{dV}=(\sigma(v)+v\sigma'(v))^{-1},
 \]
 we obtain the Taylor expansion $ \gamma^0(V)=\sum^\infty_{i=4k}a_iV^i, a_{4k}\neq 0$.

 By separating the even and odd terms, we can take functions $h_1$ and $h_2$ satisfying $h_1(0)=h_1'(0)=h_2(0)=0$ such that $\gamma^0(V)=h_1(V^2)+V^{2k+1}h_2(V^2)$. Similarly, $\gamma^2$ is also expressed as
 \[
 \gamma^2(V)=b_{2k+1}V^{2k+1}+b_{2k+1}V^{2k+1}h_3(V^2)+h_4(V^2), 
 \]
 where $b_{2k+1}\neq 0$ and $h_3$, $h_4$ are functions satisfying $h_3(0)=h_4(0)=0$.
 
 Therefore, if we put the new parameter $U=\int^u_0\omega_1$, the map $f_1$ is written as
 \begin{align*}
f_1(U,V)&=(U,f^1_1(U),f^2_1(U))\\
&+\left(h_1(V^2)+V^{2k+1}h_2(V^2), V^2, b_{2k+1}V^{2k+1}+ b_{2k+1}V^{2k+1}h_3(V^2)+h_4(V^4) \right),
 \end{align*}
 where $f^1_1$ and $f^2_1$ are functions with $f^1_1(0)=f^2_1(0)=0$.
Since the map
 \[
 \Phi_2(X,Y,Z)=(X+h_1(Y)+Zh_2(Y),f^1_1(X)+Y,f^2_1(X)+b_{2k+1}Zh_3(Y)+h_4(Y)),
 \]
 satisfies $\Phi_2(0)=0$ and the Jacobian of $\Psi_2$ at $0$ is $b_{2k+1}-b_{2k+1}h_1'(0){(f^1_1)}'(0)=b_{2k+1}$, it is a local diffeomorphism of $\mathbb{R}^3$ at $0$. Hence we conclude that 
 \[
  \Phi_2^{-1}\circ f_1(U,V)=(U,V^2,V^{2k+1}).
 \]
\end{proof}

\subsection{Cuspidal beaks and lips} Curspidal beaks and cuspidal lips are degenerate rank one singular points of fronts,  and criteria for them were given by Izumiya, Saji and Takahasi \cite{IST} as follows.
\begin{fact}[\cite{IST}]\label{ISTcriteria}
Let $f\colon U\subset\mathbb{R}^2 \longrightarrow \mathbb{R}^3$ be a front and $p\in U$ be a singular point of $f$. The map $f$ is $\mathcal{A}$-equivalent to the cuspidal beaks (resp.\ cuspidal lips) at $p$ if and only if $\rm{rank}{(df_p)}=1$, $d\lambda_p=0$,  $\eta\eta\lambda(p)\neq0$ and $\det{(\Hess{\lambda)}}(p)<0$ and (resp.\ $\rm{rank}{(df_p)}=1$, $d\lambda_p=0$ and $\det{(\Hess{\lambda})}(p)>0$) hold, where $\det{(\Hess{\lambda})}$ means the determinant of the Hessian matrix of the function $\lambda$.
\end{fact}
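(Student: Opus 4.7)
The plan is to combine a Morse-type analysis of the singular set $\{\lambda=0\}$ with a normal-form reduction that respects the front structure. First, since $\rank(df_p)=1$, I would choose local coordinates $(u,v)$ centered at $p$ in which the null vector field is $\eta=\partial_v$, and use the front condition to arrange $n(p)=e_3$, so that $(f^1,f^2)$ is a local diffeomorphism onto its image and $f^3$ plays the role of a generating function for the Legendrian lift. The singular set of $f$ is then the zero set of the signed area density $\lambda=\det(f_u,f_v,n)$.

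Second, since $d\lambda_p=0$, the local geometry of this zero set is governed by $\Hess\lambda(p)$. If $\det(\Hess\lambda)(p)>0$, the Hessian is definite and the Morse lemma shows the singular set reduces to the isolated point $\{p\}$, matching the singular set of the standard lips $f_{CLP}$; note that definiteness forces $\eta\eta\lambda(p)\neq 0$ automatically, so no extra hypothesis is needed. If $\det(\Hess\lambda)(p)<0$, the Hessian is indefinite and the Morse lemma produces local coordinates in which $\lambda$ becomes $uv$, so the singular set is a pair of regular curves meeting transversely at $p$; the supplementary condition $\eta\eta\lambda(p)\neq 0$ then forces the null direction to be transverse to both branches, which is the genuine non-degeneracy condition distinguishing beaks from strictly more degenerate singularities.

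Third, to promote this topological description of the singular set to $\mathcal{A}$-equivalence with $f_{CBK}$ or $f_{CLP}$, I would Taylor-expand $f$ in the normalized coordinates, identify its lowest-order terms with those of the standard forms, and use the generating-function structure inherited from the immersive Legendrian lift to invoke a Mather-type recognition theorem, equivalently the classification of stable corank-one Legendrian projection singularities, to eliminate higher-order terms by source and target diffeomorphisms.

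The step I expect to be the main obstacle is this last one: verifying that the stated conditions on $\lambda$ are genuinely sufficient, not merely necessary, for the germ to lie in the standard $\mathcal{A}$-orbit. This requires carefully tracking how the Legendrian immersion condition couples the Taylor coefficients of $f$ to those of $n$ and ruling out any additional modulus that could split the orbit; the cleanest route is to verify the hypotheses of a standard versality and finite determinacy theorem for corank-one front germs under the prescribed Hessian and null-direction non-degeneracies, which is ultimately where the delicate bookkeeping of the Izumiya--Saji--Takahashi proof concentrates.
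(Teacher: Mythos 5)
This statement is quoted in the paper as a Fact with a citation to Izumiya--Saji--Takahashi \cite{IST}; the paper itself contains no proof of it, so there is no internal argument to compare yours against. Judged on its own merits, your proposal has two concrete problems.

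First, the coordinate setup is flawed: at a singular point with $\rank(df_p)=1$, arranging $n(p)=e_3$ does \emph{not} make $(f^1,f^2)$ a local diffeomorphism --- its Jacobian at $p$ has rank one (already for the cuspidal edge $(u,v^2,v^3)$ one gets $(u,v^2)$, a fold). The standard reduction for corank-one germs is instead to write $f=(u,f^2(u,v),f^3(u,v))$ with $\eta=\partial_v$ and $f^2_v=f^3_v=0$ along the singular set; your generating-function picture has to be set up from the Legendrian lift, not from a projection that is assumed invertible. Second, and more seriously, the Morse-lemma analysis of $\lambda$ only identifies the diffeomorphism type of the singular set in the \emph{source} (an isolated point versus two transverse branches) together with the position of $\eta$ relative to it. This is a necessary package of $\mathcal{A}$-invariants, but it is very far from sufficient: the entire content of the criterion is that these second-order data of $\lambda$, plus the front condition, already pin down the $\mathcal{A}$-orbit of the map germ. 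Your proposed route to sufficiency --- invoking the classification of \emph{stable} corank-one Legendrian projection singularities --- cannot work, because cuspidal lips and beaks are not stable front singularities: they occur only where $d\lambda_p=0$, a codimension-one condition, and the stable list in $\mathbb{R}^3$ consists of cuspidal edges and swallowtails only. One genuinely needs a recognition/finite-determinacy argument tailored to these specific degenerate germs (this is what \cite{IST} supplies), and nothing in your outline substitutes for it; you correctly flag this step as the obstacle, but flagging it does not close the gap.
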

Applying Fact \ref{ISTcriteria}, we have the following. 
\begin{proposition}\label{prop:BeaksLips}
Let $p \in \Sigma^\omega(f)$ be an $\omega$-singular point satisfying $\hat{\omega}_2(p)=0$ of a generalized timelike minimal surface $f$ with W-data $(g_1, g_2,\hat{\omega}_1du, \hat{\omega}_2dv)$. Then $f$ is $\mathcal{A}$-equivalent to the cuspidal beaks if and only if the following conditions hold:
\begin{equation}\label{eq:(BeaksLips}
\hat{\omega}_1\neq0, (\hat{\omega}_2)_v\neq 0, g_1g_2=1 \text{ and } (g_1)_u(g_2)_v\neq 0 \text{ at $p$.}
\end{equation}
Moreover, there is no cuspidal lips on any generalized timelike minimal surface in $\mathbb{L}^3$.
\end{proposition}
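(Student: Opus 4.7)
The plan is to apply the criteria from Fact \ref{ISTcriteria} directly to the explicit formula $\lambda = \Lambda(1-g_1g_2)\hat{\omega}_1\hat{\omega}_2$ from \eqref{eq:lambda}. Since $\hat{\omega}_2(p) = 0$ forces $f_v(p) = \bm{0}$, the rank-one condition $\mathrm{rank}(df_p)=1$ reduces to $\hat{\omega}_1(p) \neq 0$, and the front condition at $p$ is equivalent to $(g_2)_v(p) \neq 0$ by Lemma \ref{lemma_front_condition}(i). From the product structure of $\lambda$, the degeneracy condition $d\lambda_p = 0$ reduces to the dichotomy that either $g_1g_2(p) = 1$ or $(\hat{\omega}_2)_v(p) = 0$. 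I would split into these two subcases and compute $\Hess \lambda$ at $p$ in each.

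The central observation is that $\hat{\omega}_1, g_1$ depend only on $u$ and $\hat{\omega}_2, g_2$ depend only on $v$, so a Leibniz expansion of each second partial is easy to organize: for a term to survive at $p$, every factor of $\lambda$ that vanishes at $p$ must already have been differentiated. In the subcase $g_1g_2(p) = 1$, the only vanishing factors are $(1-g_1g_2)$ and $\hat{\omega}_2$, both to first order. A direct bookkeeping yields
\[
\lambda_{uu}(p) = 0, \qquad \lambda_{uv}(p) = -\Lambda(g_1)_u g_2 \hat{\omega}_1 (\hat{\omega}_2)_v(p), \qquad \lambda_{vv}(p) = -2\Lambda g_1 (g_2)_v \hat{\omega}_1 (\hat{\omega}_2)_v(p),
\]
and hence
\[
\det(\Hess \lambda)(p) = -\bigl(\Lambda(g_1)_u g_2 \hat{\omega}_1 (\hat{\omega}_2)_v\bigr)^2(p) \le 0.
\]
In the other subcase, $(\hat{\omega}_2)_v(p) = 0$ with $g_1g_2(p) \neq 1$, the factor $\hat{\omega}_2$ vanishes together with its first $v$-derivative, so every term contributing to $\lambda_{uu}(p)$ or $\lambda_{uv}(p)$ still carries an undifferentiated $\hat{\omega}_2$ or $(\hat{\omega}_2)_v$, forcing both to vanish. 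The resulting Hessian is singular with $\det(\Hess \lambda)(p) = 0$.

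Combining these computations with Fact \ref{ISTcriteria}, the nonexistence of cuspidal lips follows immediately, since the lips criterion requires $\det(\Hess\lambda)(p)>0$, which never occurs in either degenerate subcase. For cuspidal beaks, the strict inequality $\det(\Hess \lambda)(p) < 0$ rules out the $(\hat{\omega}_2)_v = 0$ subcase and, by the displayed expression, is equivalent to $(g_1)_u\, \hat{\omega}_1\, (\hat{\omega}_2)_v \neq 0$ at $p$ (note $g_2(p) \neq 0$ since $g_1g_2(p) = 1$, and $\Lambda(p) \neq 0$). Taking the null vector field $\eta = \partial_v$, the condition $\eta\eta\lambda(p) = \lambda_{vv}(p) \neq 0$ then becomes $(g_2)_v \neq 0$ under the foregoing constraints, which simultaneously secures the front condition. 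Collecting the rank, degeneracy, front, Hessian, and null-Hessian conditions yields precisely \eqref{eq:(BeaksLips}. The main obstacle is the Hessian bookkeeping, but once one sorts surviving terms by which factor supplies each derivative, the computation is mechanical.
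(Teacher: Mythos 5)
Your proposal is correct and follows essentially the same route as the paper: both apply Fact \ref{ISTcriteria} to the factored area density $\lambda=\Lambda(1-g_1g_2)\hat{\omega}_1\hat{\omega}_2$, observe $\lambda_{uu}(p)=0$ so that $\det(\Hess\lambda)(p)=-\lambda_{uv}(p)^2\le 0$ kills cuspidal lips, and then reduce the beaks conditions to \eqref{eq:(BeaksLips} via $\lambda_{uv}$ and $\eta\eta\lambda=\lambda_{vv}$ with $\eta=\partial_v$. (Your coefficient $-2\Lambda g_1(g_2)_v\hat{\omega}_1(\hat{\omega}_2)_v$ for $\lambda_{vv}(p)$ is in fact the correct one, the paper's displayed formula omitting the factor $2$; this is immaterial since only the nonvanishing of $\lambda_{vv}(p)$ is used.)
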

\begin{proof}
By Lemma \ref{lemma_front_condition}, $f$ is a front at $p$ if and only if $\hat{\omega}_1(p)\neq 0$ and $(g_2)_v(p)\neq 0$. Since $p$ is degenerate, $(\hat{\omega}_2)_v(p) =0$ or $g_1(p)g_2(p)=1$ also holds. Therefore, the function $\lambda$ in \eqref{eq:lambda} satisfies $\lambda_u=\lambda_v=\lambda_{uu}=0$ at $p$. Hence, the non-existence of cuspidal lips is proved by Fact \ref{ISTcriteria} and the relation $\det{(\Hess{\lambda})}(p)=-\lambda_{uv}(p)^2\leq 0$. Since 
\[
\lambda_{uv}=-\Lambda(g_1)_ug_2\hat{\omega}_1(\hat{\omega}_2)_v+\Lambda_u(1-g_1g_2)\hat{\omega}_1(\hat{\omega}_2)_v \text{ at $p$},
\]
it is sufficient to consider the case of $(\hat{\omega}_2)_v(p)\neq 0$ and $g_1(p)g_2(p)=1$.
In this case, $\lambda_{uv}=-\Lambda(g_1)_ug_2\hat{\omega}_1(\hat{\omega}_2)_v$ at $p$ and hence the condition $\det{(\Hess{\lambda})}(p) < 0$ is attained only when $(g_1)_u(p)\neq0$. Moreover, the null vector field $\eta=\partial_v$ satisfies
\[
\eta\eta\lambda=\lambda_{vv}=-\Lambda g_1(g_2)_v\hat{\omega}_1(\hat{\omega}_2)_v \neq 0 \text{ at $p$},
\]
which proves the desired result by Fact \ref{ISTcriteria}.
\end{proof}

\subsection{$D_4^\pm$ singularities}
$D_4$ singularities are typical rank zero singularities and consequently they are degenerate. In fact, as shown in Introduction, they appear on minimal surfaces in $\mathbb{E}^3$ as branch points.
First we give a necessary and sufficient condition for a generalized timelike minimal surface to be a front at a rank zero singular point. 
\begin{lemma}\label{lemma_rank_two}
Let $f=f(u,v)$ be a generalized timelike minimal surface with W-data $(g_1, g_2,\hat{\omega}_1du, \hat{\omega}_2dv)$. The following statements hold.
\begin{itemize}
\item[(i)] A point $p$ satisfies $df_p=\bm{0}$ if and only if $\hat{\omega}_1(p)=\hat{\omega}_2(p)=0$.
\item[(ii)] Let $p$ be a singular point with $df_p=\bm{0}$. Then, the surface $f$ is a front at  $p$ if and only if $g_1g_2(p)\neq 1$ and $(g_1)_u(g_2)_v(p)\neq 0$.
\end{itemize}
\end{lemma}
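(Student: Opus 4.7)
The plan is to compute the relevant first derivatives of $f$ and of $n$ from the Weierstrass representation \eqref{eq:null_decomp} and the unit normal formula \eqref{eq:unitnormal}, and to reduce the two claims to linear-independence checks that ultimately boil down to a single $3\times 3$ determinant identity.

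For (i), I would differentiate \eqref{eq:null_decomp} to obtain
\[
f_u=\tfrac{1}{2}\hat{\omega}_1(-1-g_1^2,\,1-g_1^2,\,2g_1),\qquad f_v=\tfrac{1}{2}\hat{\omega}_2(1+g_2^2,\,1-g_2^2,\,-2g_2).
\]
A short computation shows that the two ``direction'' vectors above have Euclidean norms $\sqrt{2}\,(1+g_1^2)$ and $\sqrt{2}\,(1+g_2^2)$, which are strictly positive since $g_1$ and $g_2$ are finite at every singular point by the standing assumption of the paper. Hence $f_u(p)=\bm{0}$ if and only if $\hat{\omega}_1(p)=0$, and similarly $f_v(p)=\bm{0}$ if and only if $\hat{\omega}_2(p)=0$, which together give (i).

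For (ii), since $df_p=\bm{0}$, the Legendrian lift $L=(f,n)$ is an immersion at $p$ precisely when $dn_p$ has rank $2$, i.e.\ when $n_u(p)$ and $n_v(p)$ are linearly independent in $\mathbb{R}^3$. Rather than expand \eqref{eq:n_u} and \eqref{eq:n_v} by hand, it is cleaner to work with the unnormalized normal $N=(g_1+g_2,\,g_2-g_1,\,1+g_1g_2)$, so that $n=N/|N|_E$ and
\[
N_u=(g_1)_u\,(1,-1,g_2),\qquad N_v=(g_2)_v\,(1,1,g_1).
\]
Because $n$ has constant Euclidean length, the vectors $n_u$ and $n_v$ lie in $n^{\perp}$, so $n_u\times n_v$ is parallel to $n$ and vanishes if and only if $\det(n_u,n_v,n)=0$. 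Expanding $n_u$, $n_v$, and $n$ via $n=N/|N|_E$, all contributions proportional to $N$ drop out of the determinant by multilinearity, leaving $\det(n_u,n_v,n)=\det(N_u,N_v,N)/|N|_E^{3}$.

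The heart of the proof is then the identity
\[
\det(N_u,N_v,N)=2\,(g_1)_u\,(g_2)_v\,(1-g_1g_2),
\]
obtained by factoring $(g_1)_u$ and $(g_2)_v$ out of the first two columns and evaluating the resulting $3\times 3$ determinant in $g_1,g_2$. Combining the two displays, $n_u$ and $n_v$ are linearly dependent at $p$ if and only if $(g_1)_u(g_2)_v(p)=0$ or $g_1g_2(p)=1$, which is exactly the negation of the condition in (ii). The only real obstacle is getting that determinant identity clean; everything else is bookkeeping on the Weierstrass data.
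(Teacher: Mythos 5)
Your proposal is correct and follows essentially the same route as the paper: part (i) by reading off $f_u$, $f_v$ from the Weierstrass representation and using the finiteness of $g_1,g_2$ at singular points, and part (ii) by reducing the front condition to linear independence of $n_u(p)$ and $n_v(p)$, which both you and the paper resolve via the factor $(g_1)_u(g_2)_v(1-g_1g_2)$ (the paper computes $n_u\times n_v$ directly from its explicit formulas for $n_u,n_v$, whereas you pass to the unnormalized normal $N$ and the determinant $\det(N_u,N_v,N)$ — a purely organizational difference, and your determinant identity checks out).
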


\begin{proof}Since 
%\begin{equation*}
$f_u=\hat{\omega}_1(-1-g_1^2, 1-g_1^2, 2g_1)/2$ and $f_v=\hat{\omega}_2(1+g_2^2, 1-g_2^2, -2g_2)/2$,
%\end{equation*}
a point $p$ satisfies $df_p=\bm{0}$ if and only if $\hat{\omega}_i(p)=g_i\hat{\omega_i}(p)=g_i^2\hat{\omega}_i(p)=0$ for each $i=1,2$. It is equivalent to $\hat{\omega}_i(p)=0$ by the assumption that $g_i(p)\in\mathbb{R}$. Then we have proved the assertion (i). 

Next we prove the assertion (ii). Since $df_p= {\bm 0}$, $f$ is a front at $p$ if and only if $n_u(p)$ and $n_v(p)$ are linearly independent. By \eqref{eq:n_u} and \eqref{eq:n_v}, we have
\[
n_u\times n_v=(g_1)_u(g_2)_v(1-g_1g_2)\tilde{n},\quad \text{$\tilde{n}$ is a non-zero vector},
\]
which proves the assertion.
\end{proof}

Saji \cite{S2} gave useful criteria for $D_4$ singularities on fronts.
\begin{fact}[\cite{S2}]\label{saji_criteria}
Let $f\colon U\subset\mathbb{R}^2 \longrightarrow \mathbb{R}^3$ be a front and $p$ be a point in $U$. Then $f$ is $\mathcal{A}$-equivalent to the $D_4^+$ singularity (resp. $D_4^-$ singularity) if and only if $df_p=\bm{0}$ and $\det{(\Hess{\lambda)}}(p)<0$ (resp.\ $\det{(\Hess{\lambda})}(p)>0$) hold,
where $\det{(\Hess{\lambda})}$ means the determinant of the Hessian matrix of the function $\lambda$.
\end{fact}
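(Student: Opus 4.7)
The plan is to verify Saji's criterion in two stages: check the Hessian condition on the standard $D_4^{\pm}$ germs, and then reduce an arbitrary germ satisfying the hypotheses to one of them by coordinate changes compatible with the front structure. I would first compute $\lambda = \det(f_u, f_v, n)$ for each standard form at the origin. Both $f_u$ and $f_v$ vanish there, so $df_p = \bm{0}$; the cross product $f_u \times f_v$ contains a common scalar factor whose quotient, by the front condition, extends to a smooth unit normal $n(0)$. A direct computation of the second derivatives then yields a Hessian matrix for $\lambda$ at the origin whose determinant is negative for $f_{D_4^{+}}$ and positive for $f_{D_4^{-}}$. Since source-and-target diffeomorphisms alter $\lambda$ by multiplication by a nonvanishing Jacobian factor, at a critical point the sign of $\det(\Hess\lambda)(p)$ is preserved under $\mathcal{A}$-equivalence, which gives the necessity direction.

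For sufficiency, the key observation is that the front condition together with $df_p = \bm{0}$ forces $dn_p$ to have rank two: the Legendrian lift $L = (f,n)$ is an immersion, and its tangent space at $p$ must therefore come entirely from $dn_p$. Hence, after a target diffeomorphism of $S^2$, the map $n$ can serve as a local chart on a neighborhood of $p$, and the relation $\langle df, n \rangle_E = 0$ recovers $f$ from $n$ up to an additive Legendrian primitive. Under this representation, $\lambda$ plays the role of a generating function whose Morse type controls the local diffeomorphism class of $f$. The assumption $\det(\Hess\lambda)(p) \neq 0$, combined with $\lambda(p) = 0$ and $d\lambda(p) = 0$, places $\lambda$ in one of two Morse classes, and a source diffeomorphism supplied by the Morse lemma brings it to $uv$ (the saddle case, corresponding to negative determinant) or $\pm(u^2 + v^2)$ (the extremum case, corresponding to positive determinant).

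The main obstacle is the final matching step, verifying that a Morse normal form for $\lambda$, together with the normalized $n$, forces $f$ itself to be $\mathcal{A}$-equivalent to the corresponding $f_{D_4^{\pm}}$. I would address this by a Malgrange preparation argument combined with carefully chosen target diffeomorphisms, absorbing higher-order Taylor terms of $f$ into polynomial modifications of the target coordinates, in the spirit of Arnold's classification of stable Legendrian singularities. A more direct alternative, avoiding detailed formal-series manipulation, is to invoke this classification outright: $D_4^{\pm}$ are precisely the rank-zero Legendrian map germs whose signed area density has nondegenerate Hessian at the singular point, and the sign of $\det(\Hess\lambda)(p)$ distinguishes the two cases.
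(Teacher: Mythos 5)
This statement is imported verbatim from Saji's paper \cite{S2} and is stated here as a Fact without proof, so there is no argument in the paper to compare yours against; I can only assess the proposal on its own terms. The necessity direction you outline is essentially correct: for the normal forms one finds $f_u\times f_v=(u^2\mp 3v^2)(2u,v,-2)$, hence $\lambda=(u^2\mp 3v^2)\sqrt{4u^2+v^2+4}$ and $\det(\Hess\lambda)(\bm{0})=\mp 12\,(4)<0$ for $D_4^+$, $>0$ for $D_4^-$; and under $f_2=\Psi\circ f_1\circ\Phi^{-1}$ one has $\lambda_2=\rho\cdot(\lambda_1\circ\Phi^{-1})$ with $\rho=\det(d\Psi)\,\bigl|(d\Psi^{-1})^{T}n_1\bigr|\,\det(J_{\Phi^{-1}})\neq 0$ (note the factor is not just a Jacobian, as you wrote, but it is nonvanishing, which is all that matters), so at a point where $\lambda$ and $d\lambda$ vanish the Hessian transforms congruently up to the positive factor $\rho^2(\det J_{\Phi^{-1}})^2$ and the sign of its determinant is an $\mathcal{A}$-invariant.

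The sufficiency direction, however, contains a genuine gap. Your rank argument correctly shows that $df_p=\bm{0}$ plus the front condition forces $dn_p$ to have rank two, so $n$ is a local chart; but the pivotal assertion that ``$\lambda$ plays the role of a generating function whose Morse type controls the local diffeomorphism class of $f$'' is exactly the content of the theorem, not something your argument establishes. The two ways you propose to close it do not work as stated: option (b), invoking that ``$D_4^{\pm}$ are precisely the rank-zero Legendrian map germs whose signed area density has nondegenerate Hessian,'' is a verbatim restatement of the claim being proved and hence circular; option (a), a Malgrange-preparation/normal-form computation, is precisely where the entire substance of Saji's proof lies, and you give no indication of how the higher-order terms of $f$ are actually absorbed. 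There is also a conflation of objects: in the Legendrian picture the generating family to which Arnold's $D_4$ classification applies is built from the support-type function $h=\langle f,n\rangle_E$ in the chart provided by $n$, not from $\lambda$ itself, and the identification of the $\mathcal{K}$-versality/$D_4$ condition on that family with the single scalar condition $\det(\Hess\lambda)(p)\neq 0$ is the nontrivial step of \cite{S2}. Putting $\lambda$ into Morse normal form by a source diffeomorphism normalizes only the singular set of $f$, not the map germ, so the proposal sets the problem up correctly but does not solve it.
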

Applying Fact \ref{saji_criteria}, we have the following. 

\begin{proposition}\label{D_4criterion}
Let $f=f(u,v)$ be a generalized timelike minimal surface with W-data $(g_1, g_2,\hat{\omega}_1du, \hat{\omega}_2dv)$. Then $f$ is $\mathcal{A}$-equivalent to the $D_4^+$ singularity if and only if
\begin{equation}\label{eq:D4criteria}
\hat{\omega}_1(p)=\hat{\omega}_2(p)=0, g_1g_2(p)\neq1, (g_1)_u(g_2)_v(p)\neq0  \text{ and } (\hat{\omega}_1)_u(\hat{\omega}_2)_v(p)\neq 0.
\end{equation}
Moreover there is no $D_4^-$ singularity on any generalized timelike minimal surface in $\mathbb{L}^3$.
\end{proposition}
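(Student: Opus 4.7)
The plan is to reduce everything to Fact \ref{saji_criteria} via the explicit formula $\lambda=\Lambda(1-g_1g_2)\hat{\omega}_1\hat{\omega}_2$ from \eqref{eq:lambda}. First I would invoke Lemma \ref{lemma_rank_two}(i) to rewrite the rank-zero condition $df_p=\bm{0}$ as $\hat{\omega}_1(p)=\hat{\omega}_2(p)=0$, and Lemma \ref{lemma_rank_two}(ii) to express the front condition at $p$ as $g_1g_2(p)\neq 1$ together with $(g_1)_u(g_2)_v(p)\neq 0$. These three conditions already account for all but the last condition in \eqref{eq:D4criteria}, so what remains is to show that the Hessian criterion $\det(\Hess\lambda)(p)<0$ in Fact \ref{saji_criteria} is equivalent to $(\hat{\omega}_1)_u(\hat{\omega}_2)_v(p)\neq 0$.

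Next I would compute $\Hess\lambda$ at $p$. Writing $\lambda=H(u,v)\hat{\omega}_1(u)\hat{\omega}_2(v)$ with $H=\Lambda(1-g_1g_2)$, and using that $\hat{\omega}_1$ depends only on $u$ and $\hat{\omega}_2$ only on $v$, every term in $\lambda_u, \lambda_v, \lambda_{uu}, \lambda_{vv}$ at $p$ retains a factor of $\hat{\omega}_1(p)$ or $\hat{\omega}_2(p)$, both of which vanish. Hence
\begin{equation*}
\lambda_u(p)=\lambda_v(p)=\lambda_{uu}(p)=\lambda_{vv}(p)=0,
\end{equation*}
and the only surviving contribution to the mixed partial is
\begin{equation*}
\lambda_{uv}(p)=H(p)(\hat{\omega}_1)_u(p)(\hat{\omega}_2)_v(p)=\Lambda(p)\bigl(1-g_1g_2(p)\bigr)(\hat{\omega}_1)_u(p)(\hat{\omega}_2)_v(p).
\end{equation*}
Since $\Lambda$ never vanishes by \eqref{eq:lambda} and $1-g_1g_2(p)\neq 0$ by the front condition, $\lambda_{uv}(p)\neq 0$ is equivalent to $(\hat{\omega}_1)_u(\hat{\omega}_2)_v(p)\neq 0$.

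The decisive observation is then that
\begin{equation*}
\det(\Hess\lambda)(p)=\lambda_{uu}(p)\lambda_{vv}(p)-\lambda_{uv}(p)^2=-\lambda_{uv}(p)^2\leq 0.
\end{equation*}
This inequality is strict precisely when $(\hat{\omega}_1)_u(\hat{\omega}_2)_v(p)\neq 0$, and equals zero otherwise. Combining with Fact \ref{saji_criteria} and $df_p=\bm{0}$, this simultaneously yields both halves of the proposition: the $D_4^+$ criterion is $\det(\Hess\lambda)(p)<0$, which together with the rank-zero and front conditions gives exactly \eqref{eq:D4criteria}; and the $D_4^-$ condition $\det(\Hess\lambda)(p)>0$ can never be satisfied, so $D_4^-$ singularities do not occur on any generalized timelike minimal surface.

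I do not expect a serious obstacle here: the proof is essentially a bookkeeping exercise once one notices that the null-coordinate structure forces $\hat{\omega}_1=\hat{\omega}_1(u)$ and $\hat{\omega}_2=\hat{\omega}_2(v)$, which collapses the Hessian of $\lambda$ to a single off-diagonal entry. The only mild subtlety is verifying that $\Lambda(p)\neq 0$, but this is immediate from the formula $\Lambda=-\sqrt{(1-g_1g_2)^2+2(g_1+g_2)^2}/2$, whose radicand is strictly positive whenever $g_1, g_2$ are finite (which is our standing assumption at singular points).
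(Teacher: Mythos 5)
Your proposal is correct and follows essentially the same route as the paper: reduce to Fact \ref{saji_criteria} via Lemma \ref{lemma_rank_two}, factor $\lambda$ as $\tilde{\Lambda}\hat{\omega}_1\hat{\omega}_2$ with $\tilde{\Lambda}=\Lambda(1-g_1g_2)$ nonvanishing under the front condition, and observe that the Hessian at $p$ collapses to the single entry $\lambda_{uv}(p)=\tilde{\Lambda}(p)(\hat{\omega}_1)_u(p)(\hat{\omega}_2)_v(p)$, forcing $\det(\Hess\lambda)(p)=-\lambda_{uv}(p)^2\leq 0$. Both the criterion for $D_4^+$ and the nonexistence of $D_4^-$ then follow exactly as in the paper's argument.
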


\begin{proof}
By Lemma \ref{lemma_rank_two}, $f$ is a front at $p$ with $df_p=0$ if and only if 
\begin{equation}\label{eq:front_cond}
\hat{\omega}_1(p)=\hat{\omega}_2(p)=0, g_1g_2(p)\neq1 \text{ and } (g_1)_u(g_2)_v(p)\neq0.
\end{equation}
The signed area density function $\lambda$ in \eqref{eq:lambda} can be rewritten as
\[
\lambda=\tilde{\Lambda}\hat{\omega}_1\hat{\omega}_2,\quad \tilde{\Lambda}=-(1-g_1g_2)\sqrt{(1-g_1g_2)^2+2(g_1+g_2)^2}/2,
\] 
where $\tilde{\Lambda}$ satisfis $\tilde{\Lambda}(p)\neq0$ by the front condition \eqref{eq:front_cond}. Since
\[
\lambda_u=\tilde{\Lambda}_u\hat{\omega}_1\hat{\omega}_2+\tilde{\Lambda}(\hat{\omega}_1)_u\hat{\omega}_2,\quad \lambda_v=\tilde{\Lambda}_v\hat{\omega}_1\hat{\omega}_2+\tilde{\Lambda}\hat{\omega}_1(\hat{\omega}_2)_v,
\]
we have 
\[
\lambda_{uu}=\lambda_{vv}=0,\quad \lambda_{uv}=\tilde{\Lambda}(\hat{\omega}_1)_u(\hat{\omega}_2)_v\quad \text{at $p$.}
\]
Hence,
\begin{equation}\label{eq:dethess}
\det{(\Hess{\lambda)}}=-\lambda_{uv}^2=-\tilde{\Lambda}^2(\hat{\omega}_1)_u^2(\hat{\omega}_2)_v^2\quad \text{at $p$.}
\end{equation}

By \eqref{eq:front_cond}, \eqref{eq:dethess} and Fact \ref{saji_criteria}, we obtain the desired criterion. Moreover the right hand side of the equation \eqref{eq:dethess} is always nonpositive. This means that there is no $D_4^-$ singularity on any generalized timelike minimal surface by Fact \ref{saji_criteria}.
\end{proof} 

\begin{remark}
The {\it Hopf differentials} of $f$ are defined by 
\[ 
Qdu^2=\langle f_{uu}, n_L \rangle du^2,\quad Rdv^2=\langle f_{vv}, n_L \rangle dv^2
\]
where $n_L$ is a unit normal vector field of $f$ in $\mathbb{L}^3$. By a direct calculation, we can see that $Q$ and $R$ are written as $Q=\hat{\omega}_1(g_1)_u$ and $R=-\hat{\omega}_2(g_2)_v$ for an appropriate choice of $n_L$ and these relations make it possible to define $Q$ and $R$ across singular points. 
By using these $Q$ and $R$, the relation \eqref{eq:D4criteria} is equivalent to
\[
df_p=\bm{0}, p \in \Sigma^\omega(f)\setminus \Sigma^g(f) \text{ and } Q'R'(p)\neq0.
\]
\end{remark}

\begin{remark}
Ogata and Teramoto \cite{OT2} gave a criterion for $D_4^-$ singularities on surfaces  (resp.\ spacelike surfaces) in 3-dimensional Riemannian (resp.\ Lorentzian) spaceforms with constant mean curvature. They also show that there is no $D_4^+$ singularity on these surfaces, but the opposite phenomenon occurs for timelike surfaces in the sense of Proposition \ref{D_4criterion}.
\end{remark}

By proposition \ref{D_4criterion}, we also have the following invariance of $D_4^+$ singularities as well as $(2,5)$-cuspidal edges in Corollary \ref{cor:duality_(2,5)_cusp}.
\begin{corollary}\label{cor:duality}
Let $p$ be an $\omega$-singular point of a generalized timelike minimal surface $f$. Then, the following statements are equivalent.
\begin{itemize}
\item[(i)] $f$ is $\mathcal{A}$-equivalent to the $D_4^+$ singularity at $p$,
\item[(ii)] $f_{\theta}$ is $\mathcal{A}$-equivalent to the $D_4^+$ singularity at $p$ for arbitrary $\theta\in \mathbb{R}$ and 
\item[(iii)] $\hat{f}$ is $\mathcal{A}$-equivalent to the $D_4^+$ singularity at $p$.
\end{itemize}

\end{corollary}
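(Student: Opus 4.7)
The plan is to reduce everything to the pointwise criterion in Proposition \ref{D_4criterion} and observe that each of its four conditions depends only on data that is manifestly preserved under the transformations of Definition \ref{def:conjugate}. Concretely, the criterion requires (a) $\hat{\omega}_1(p)=\hat{\omega}_2(p)=0$, (b) $g_1g_2(p)\neq 1$, (c) $(g_1)_u(g_2)_v(p)\neq 0$, and (d) $(\hat{\omega}_1)_u(\hat{\omega}_2)_v(p)\neq 0$. I will verify each condition is invariant, first under the associated family, then under the conjugation.

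First I would handle (i) $\Leftrightarrow$ (ii). By Definition \ref{def:conjugate}, the W-data of $f_\theta$ is $(g_1,g_2,e^{\theta}\hat{\omega}_1du,e^{-\theta}\hat{\omega}_2dv)$, so the $g_i$ and their $u$- or $v$-derivatives are untouched, settling (b) and (c). Multiplying $\hat{\omega}_1$ (resp.\ $\hat{\omega}_2$) by the nowhere-zero constant $e^\theta$ (resp.\ $e^{-\theta}$) preserves both its zero locus and the non-vanishing of its first derivative at $p$, so (a) and (d) carry over. Hence the criterion of Proposition \ref{D_4criterion} holds for $f$ at $p$ if and only if it holds for every $f_\theta$ at $p$.

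For (i) $\Leftrightarrow$ (iii), the W-data of $\hat{f}$ is $(g_1,g_2,\hat{\omega}_1du,-\hat{\omega}_2dv)$ by Definition \ref{def:conjugate}. Again $g_1,g_2$ are unchanged, and replacing $\hat{\omega}_2$ by $-\hat{\omega}_2$ does not alter $\hat{\omega}_2(p)=0$ nor the non-vanishing of $(\hat{\omega}_2)_v(p)$. All four conditions of Proposition \ref{D_4criterion} are therefore simultaneously satisfied by $f$ and $\hat{f}$ at $p$, giving the desired equivalence.

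There is no genuine obstacle here; the corollary is essentially a bookkeeping consequence of the sign/scale-invariant form of the $D_4^+$ criterion. The only thing worth being slightly careful about is to note that the assumption $g_1,g_2$ take finite values at singular points (imposed throughout the paper) is preserved under both transformations, so that Proposition \ref{D_4criterion} applies equally to $f$, $f_\theta$ and $\hat{f}$ at the same point $p$.
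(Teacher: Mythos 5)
Your proof is correct and follows essentially the same route as the paper, which states the corollary as an immediate consequence of Proposition \ref{D_4criterion} without further argument: since that criterion is an if-and-only-if condition on the W-data, and the transformations of Definition \ref{def:conjugate} only rescale $\hat{\omega}_1,\hat{\omega}_2$ by the nonzero constants $e^{\pm\theta}$ or $-1$ while leaving $g_1,g_2$ untouched, all four conditions are simultaneously satisfied or violated for $f$, $f_\theta$ and $\hat{f}$. Your pointwise verification of each condition is exactly the intended bookkeeping.
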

We remark that the same invariance of $D_4^-$ singularities is also valid for generalized maximal surfaces in $\mathbb{L}^3$.
The following torus was found by Kim, Koh, Shin and Yang \cite[Example 2.7]{KKSY}.
\begin{example}
Let us consider the $2\pi$-periodic null curve 
\[
\gamma(t)=(1/2\sin{2t},1/2\sin{t}+1/6\sin{3t},1/2\cos{t}-1/6\cos{3t}),
\]
and the generalized timelike minimal torus $f(u,v)=(\gamma(u)+\gamma(v))/2$, $(u,v)\in [0,2\pi]^2.$
This torus is found by Kim Koh Shin and Yang \cite{KKSY} and we will show this surface has $D_4^+$ singularities.
The singular points at which $df=\bm{0}$ are
\begin{equation}\label{eq:KKSYsing}
(u,v)=((1+2k)\pi/4,(1+2l)\pi/4),\quad 0\leq k,l \leq 3.
\end{equation}
The W-data $(g_1, g_2,\hat{\omega}_1du, \hat{\omega}_2dv)$ of $f$ is determined as
\begin{align*}
g_1(u)=\frac{\sin{u}}{\cos{u}-1}&,\ g_2(v)=-\frac{\sin{v}}{\cos{v}+1},\\
\hat{\omega}_1(u)=\frac{\cos{2u}(\cos{u}-1)}{2}&,\ \hat{\omega}_2(v)=\frac{\cos{2v}(\cos{v}+1)}{2}.
\end{align*}
Here, we remark that the functions $g_1$ and $g_2$ have finite values at above singular points. We can see that W-data satisfies $(g_1)_u(g_2)_v(\hat{\omega}_1)_u(\hat{\omega}_2)_v\neq0$ at singular points \eqref{eq:KKSYsing}. Since $g_1((1+2k)\pi/4)g_2((1+2l)\pi/4)\neq 1$ for $k\neq l$, Proposition \ref{D_4criterion} shows that $f$ has these twelve $D_4^+$ singularities (see Figure \ref{Fig_D4+}).
 
Next we consider its conjugate surface. Since the generalized timelike minimal torus $f$ has the fold singular points (see \cite{FKKRSUYY_Okayama, KKSY} for details) 
\[
\Gamma=\{(u,u)\mid u \in [0,2\pi], u\neq \pi/4,3\pi/4,5\pi/4,7\pi/4\}
\]
with the folded symmetry $f(v,u)=f(u,v)$, its conjugate surface $\hat{f}$ has conelike singular points satisfying $\hat{f}(\Gamma)=\bm{0}$ in $\mathbb{L}^3$ with the point symmetry $\hat{f}(v,u)=-\hat{f}(u,v)$. By Corollary \ref{cor:duality}, the $D_4^+$ singularities on $f$ correspond to those of $\hat{f}$, see Figure \ref{Fig_ConjugateKKSY}. 

\begin{figure}[htb]
\vspace{-1.8cm}

\begin{center}
 \begin{tabular}{{c@{\hspace{-20mm}}c@{\hspace{-20mm}}c}}
\hspace{+3mm}   \resizebox{6.6cm}{!}{\includegraphics[clip,scale=0.33,bb=0 0 555 449]{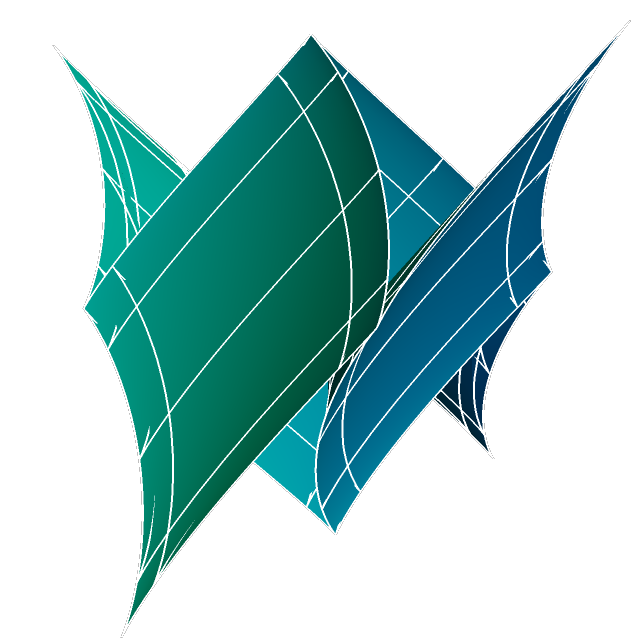}}&
  \resizebox{7.0cm}{!}{\includegraphics[clip,scale=0.35,bb=0 0 555 449]{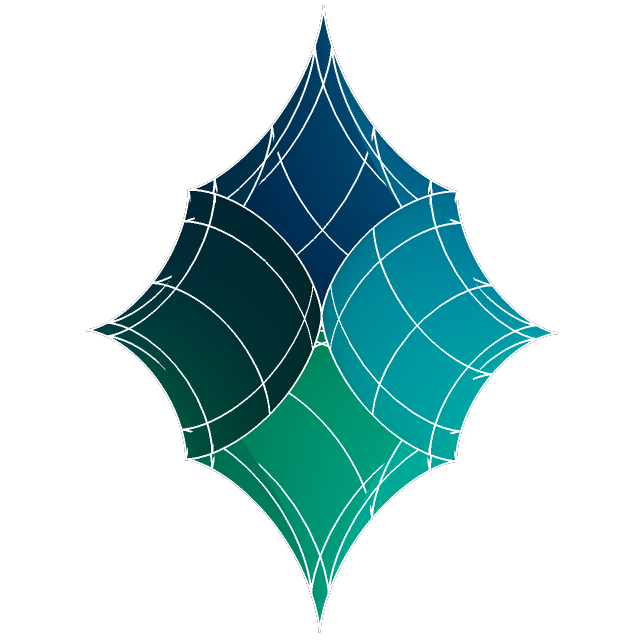}}&
  \resizebox{6cm}{!}{\includegraphics[clip,scale=0.38,bb=0 0 555 449]{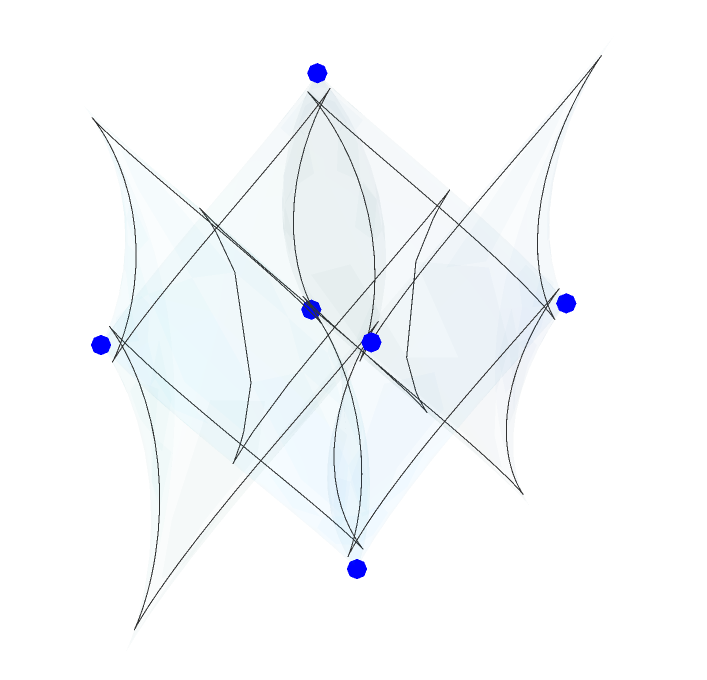}} \\
  {\hspace{-30mm}\footnotesize  The timelike minimal torus $f$.} &
  {\hspace{-30mm}\footnotesize  Another view.} &
  {\hspace{-20mm}\footnotesize  The images of $D_4^+$ singularities.}
 \end{tabular}
 \caption{The timelike minimal torus in \cite{KKSY} and the images of $D_4^+$ singularities on it (six dots in the right figure). Because of the folded symmetry, there is an overlap in the image of $D_4^+$ singularities.}
 \label{Fig_D4+}
\end{center}
\end{figure}

%%%%%%%%%%%%%%%%%
\begin{figure}[htb]
\vspace{-1.8cm}

\begin{center}
 \begin{tabular}{{c@{\hspace{-05mm}}c@{\hspace{-06mm}}c}}
 \hspace{+10mm}  \resizebox{5cm}{!}{\includegraphics[clip,scale=0.33,bb=0 0 555 449]{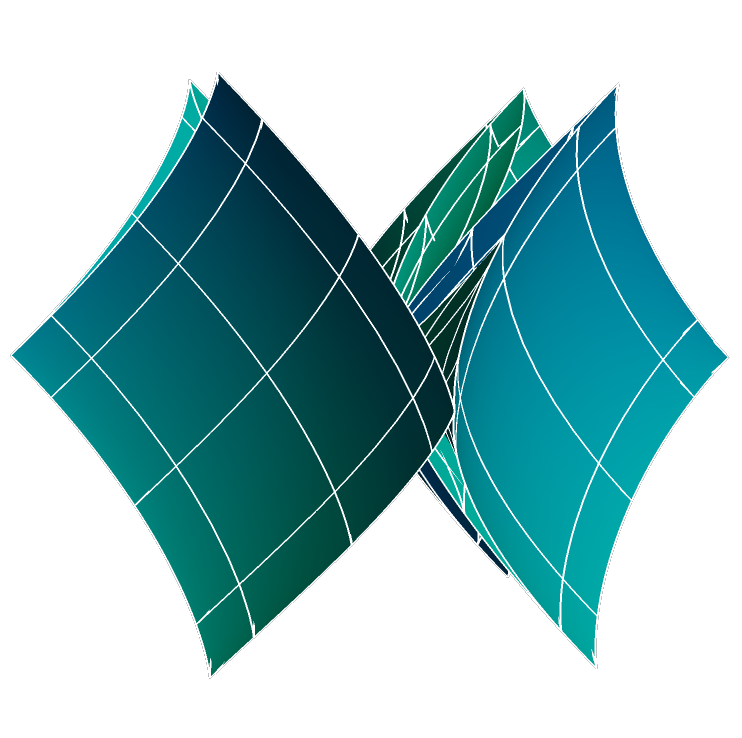}}&
  \resizebox{5cm}{!}{\includegraphics[clip,scale=0.35,bb=0 0 555 449]{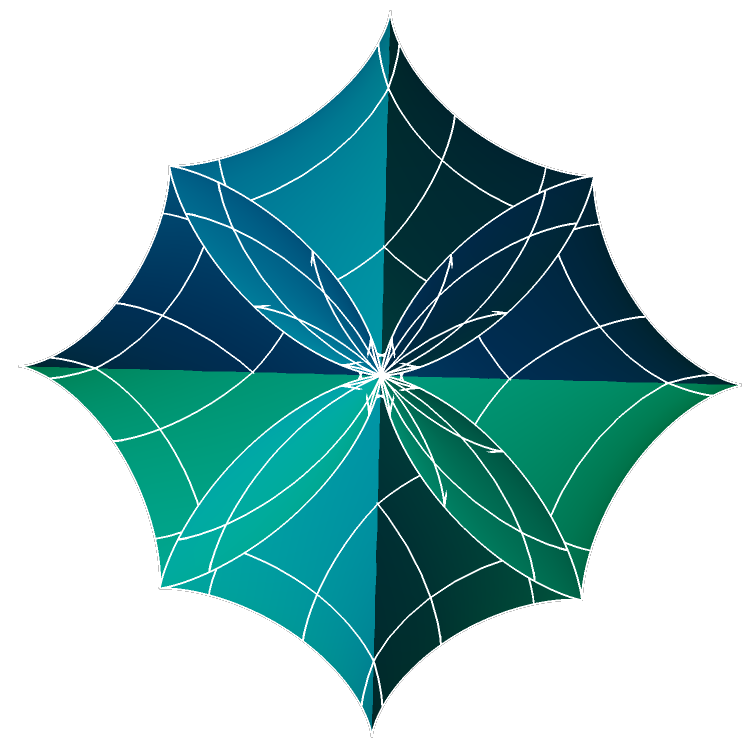}}&
  \resizebox{6cm}{!}{\includegraphics[clip,scale=0.38,bb=0 0 555 449]{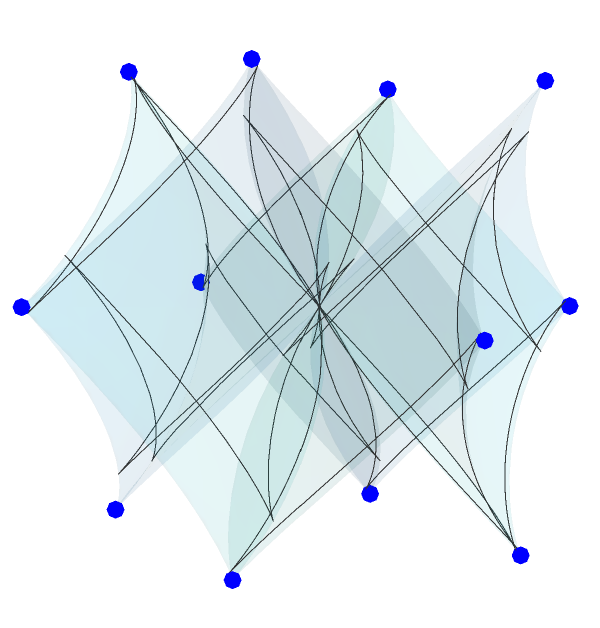}} \\
  {\hspace{-12mm}\footnotesize  The conjugate torus $\hat{f}$.} &
  {\hspace{-15mm}\footnotesize  Another view.} &
  {\hspace{-20mm}\footnotesize  The images of $D_4^+$ singularities.}
 \end{tabular}
 \caption{The conjugate surface $\hat{f}$ and the images of $D_4^+$ singularities on it (twelve dots in the right figure).}
\label{Fig_ConjugateKKSY}
\end{center}
\end{figure}

\end{example}
%%%%%%%%%%%%%%%%%%%%%%%%%%%%%%%%%
%%%%%%%%%%%%%%%%%%%%%%%%%%%%%%%%%%

\appendix
\section{Proof of Lemma \ref{Lem:Lemma35_OT}}\label{sec:app}
In this section, we give a proof of Lemma \ref{Lem:Lemma35_OT}. Although the strategy of the proof is the same as in the case of maximal surfaces \cite[Lemma 35]{OT}, it should be noted that different assumptions are adopted, see also Remark \ref{remark:OT}.
\begin{proof}[Proof of Lemma \ref{Lem:Lemma35_OT}]
We can take a local coordinates $(U; \sigma,\tau)$ around $p$ such that $\xi = \partial_\sigma$ and $\eta=\partial_\tau$. 
Let us define the function $F(\sigma,\tau):=\langle f(\sigma,\tau), f_\sigma(\bm{0})\rangle_E$. Since $F_\sigma(\bm{0})=\langle f_\sigma(\bm{0}), f_\sigma(\bm{0})\rangle_E\neq 0$, there exists a function $\sigma=\sigma(\tau)$ such that $F(\sigma(\tau),\tau)=0$ and $\sigma(0)=0$. We show the curve $c(\tau)=(\sigma(\tau),\tau)$ satisfies the condition (iii) of Fact \ref{cS_criteria}.

Taking the derivative of $F\circ c$ with respect to $\tau$, we have 
\[
0=d(F\circ c)/d\tau = \langle f_\sigma(\sigma,\tau)\sigma'(\tau)+f_\tau(\sigma,\tau), f_\sigma(\bm{0})\rangle_E,
\]
where $\sigma'=d\sigma/d\tau$. Since $f_\tau=\eta f =\bm{0}$ along $\sigma$-axis, we have $\sigma'(0)=0$ and hence $c'(0)=\eta(0)$. 
Based on the relations $\sigma'(0)=0$ and $f_{\sigma\tau} =\bm{0}$ along $\sigma$-axis, we can compute 
\[
0=d^3(F\circ c)/d\tau^3 (0)= \langle f_\sigma(\bm{0})\sigma'''(0)+f_{\tau \tau \tau}(\bm{0}), f_\sigma(\bm{0})\rangle_E.
\]
By the assumption  $f_{\tau \tau \tau}(\bm{0})=\eta^3f(p)=\bm{0}$, we obtain the third derivative relation $\sigma'''(0)=0$. 
We note that, unlike the situation in \cite{OT}, $\sigma''(0)=0$ does not follow under the current assumption.

Next we check the condition (iii) of Fact \ref{cS_criteria} for $\hat{c}:=f\circ c$. Taking the derivatives of $\hat{c}$, we have
\begin{align*}
\hat{c}'&=f_\sigma\sigma'+f_\tau,\\
\hat{c}''&=f_{\sigma\sigma}(\sigma')^2+2f_{\sigma\tau}\sigma'+f_\sigma\sigma''+f_{\tau\tau},\\
\hat{c}'''&=f_{\sigma\sigma\sigma}(\sigma')^3+3f_{\sigma\sigma\tau}(\sigma')^2+3f_{\sigma\tau\tau}\sigma'+3f_{\sigma\sigma}\sigma'\sigma''+3f_{\sigma\tau}\sigma''+f_\sigma\sigma'''+f_{\tau\tau\tau}.
\end{align*}
So, by the relations $\sigma'(0)=\sigma'''(0)=0$ and $f_{\sigma\tau}(\bm{0})=f_{\tau\tau\tau}(\bm{0})=\eta^3f(p)=\bm{0}$, we obtain $\hat{c}'''(0)=\bm{0}$ and $\hat{c}''(0)=f_\sigma(\bm{0})\sigma''(0)+f_{\tau\tau}(\bm{0})$. By the assumption that $\xi f(p)$ and $\eta^2f(p)$ are linearly independent, $\hat{c}''(0)\neq\bm{0}$. Thus, we can take the constant $\ell$ in (iii) of Fact \ref{cS_criteria} as $\ell=0$. Moreover, the fifth derivative $c^{(5)}(0)$ is also computed as
\[
c^{(5)}(0)=10\xi \eta^3 f(p)+\sigma^{(5)}(0)\xi f(p)+ \eta^5f(p).
\]
Finally, we obtain
\[
B=\det{\left(\xi f(p), \hat{c}''(0), 3\hat{c}^{(5)}-10\ell \hat{c}^{(4)}\right)}=3\det{\left(\xi f(p),\eta^2f(p), \eta^5f(p)\right)},
\]
where we used the assumption that $\xi\eta^3f(p)$ and $\eta^2f(p)$ are linearly dependent.
\end{proof}

\textbf{Acknowledgement}
This work was partially supported by JSPS KAKENHI Grant Numbers 19K14527 and 23K12979. The author express his gratitude to Professors Atsufumi Honda, Miyuki Koiso, Yuta Ogata, Kentaro Saji, Masatomo Takahashi and Keisuke Teramoto for their valuable comments and suggestions. In particular, he would also like to express his sincere thanks to Professors Kentaro Saji and Masatomo Takahashi again for the opportunity to speak on topics related to this study at Geometry Seminar at Kobe University and series of lectures at Muroran University, and to the participants who attended those seminars.
%%%%%%%%%%%%%%%%%%%%%%%%%%%%%%%%%%

 \end{document}